\DeclareMathOperator{\id}{id}
\newcommand{\bdy}{\ensuremath{\partial}}
\newcommand{\sph}[1]{\ensuremath{\mathbb{S}^{#1}}}
\newcommand{\iso}{\ensuremath{\cong}}
\newcommand{\Z}[1][]{\ensuremath{\mathbb{Z}_{#1}}}
\newcommand{\R}{\ensuremath{\mathbb{R}}}
\newcommand{\I}{\ensuremath{\mathbb{I}}}
\newcommand{\E}{\ensuremath{\mathbb{E}}}
\newcommand{\N}{\ensuremath{\mathbb{N}}}
\newcommand{\HR}{\ensuremath{\mathbb{H}^2\times\R}}
\newcommand{\SLR}{\ensuremath{\widetilde{\mathrm{SL}_2(\R)}}}
\newcommand{\proP}[2][p]{\ensuremath{\widehat{#2}_{(#1)}}}
\newcommand{\nsgp}[1][]{\ensuremath{\triangleleft_{\rm #1}}}
\newcommand{\sbgp}[1][]{\ensuremath{\leq_{\rm #1}}}
\newcommand{\ofg}[1][]{\ensuremath{\pi_1^{\text{orb}}#1}}
\newcommand{\gp}[1]{\ensuremath{\langle #1\rangle}}
\newtheorem{theorem}{Theorem}[section]
\newtheorem*{thmquote}{Theorem}
\newtheorem{prop}[theorem]{Proposition}
\newtheorem{lem}[theorem]{Lemma}
\newtheorem{clly}[theorem]{Corollary}
\theoremstyle{definition}
\newtheorem{defn}[theorem]{Definition}
\newtheorem*{cnv}{Conventions}
\theoremstyle{remark}
\newtheorem*{rmk}{Remark}
\newtheorem{example}[theorem]{Example}
\newcounter{introthmcount}
\newenvironment{introthm}[1]{\\[1.9ex] {\bf Theorem\refstepcounter{introthmcount} \label{#1}\Alph{introthmcount}.} \em}{\em \\[2ex]}
\newcommand{\SFS}{Seifert fibre space}
\newcommand{\SSG}[1][X]{\ensuremath{#1_{\rm SF}}}
\theoremstyle{definition}
\newtheorem*{defnrecall}{Definition \ref{exauto}}
\title{Profinite rigidity of graph manifolds and JSJ decompositions of 3-manifolds}
\author{Gareth Wilkes}
\begin{document}
\maketitle
\begin{abstract}
There has been much recent interest into those properties of a 3-manifold determined by the profinite completion of its fundamental group. In this paper we give readily computable criteria specifying precisely when two orientable graph manifold groups have isomorphic profinite completions. Our results also distinguish graph manifolds among the class of all 3-manifolds and give information about the structure of totally hyperbolic manifolds, and give control over the pro-$p$ completion of certain graph manifold groups.
\end{abstract}
\section{Introduction}
Much attention has been paid recently to those properties of 3-manifolds which can be deduced from the finite quotients of their fundamental groups; or, from another viewpoint, from the structure of their lattice of finite-sheeted coverings. Having assembled these finite quotients into the profinite completion of the fundamental group, this amounts to the study of `profinite invariants' of the 3-manifold. A profinite invariant may be defined as some property $\cal P$ of a group $G$ such that, whenever $H$ is a group with property $\cal P$ and $\hat G\iso\hat H$, then $G$ also has property $\cal P$. One may restrict attention to a particular class of groups (for example fundamental groups of compact orientable 3-manifolds) and require both $G$ and $H$ to be from that class. For the rest of the paper we restrict our attention to this class of groups. By abuse of language one also refers to a profinite invariant of a 3-manifold.

The strongest profinite invariant one could hope for is the isomorphism type of the group itself. In this case one refers to a group whose profinite completion determines the isomorphism type of the group (again, among compact orientable 3-manifold groups) as `profinitely rigid'. The author showed recently \cite{Wilkes15} that, apart from some limited examples due to Hempel \cite{hempel14}, closed Seifert fibre spaces are profinitely rigid. Bridson and Reid \cite{BR15} and Boileau and Friedl \cite{BF15} have shown that the figure-eight knot complement, along with a handful of other knot complements, are profinitely rigid. Bridson, Reid and Wilton \cite{BRW16} have also proved that the fundamental groups of once-punctured torus bundles are profinitely rigid. To date these are the only known examples of profinitely rigid manifolds in the literature.

An important property of a 3-manifold is its geometry: whether it admits one of the eight Thurston geometries and, if so, which one. This problem is more naturally stated only for closed manifolds, as then any geometry is unique. Wilton and Zalesskii \cite{WZ14} have shown that the geometry of a 3-manifold is a profinite invariant. The aspect of this theorem distinguishing  between Seifert fibred or hyperbolic manifolds on the one hand and Sol or non-geometric manifolds on the other may be interpreted as the statement `triviality of the JSJ decomposition is a profinite invariant'. 

In this paper we investigate graph manifolds and, more generally, the JSJ decomposition. We obtain in particular criteria determining precisely when two graph manifold groups can have isomorphic profinite completion. The exact statement requires a good deal of terminology and notation which is inappropriate in an Introduction, hence we will give a rather loose paraphrasing of the result. For the precise statement see Theorem \ref{GMrigid}.
\begin{introthm}{introGM}
Let $M$, $N$ be closed orientable graph manifolds with JSJ decompositions $(X,M_\bullet)$, $(Y,N_\bullet)$ respectively. Suppose $\widehat{\pi_1 M}\iso \widehat{\pi_1 N}$. Then the graphs $X,Y$ are isomorphic, such that corresponding vertex groups have isomorphic profinite completions. Furthermore:
\begin{itemize}
\item If $X$ is not a bipartite graph, then $M$ is profinitely rigid and so $\pi_1 M\iso \pi_1 N$.
\item If $X$ is a bipartite graph then there is an explicit finite list of numerical equations defined in terms of $M,N$ which admit a solution if and only if $\widehat{\pi_1 M}\iso \widehat{\pi_1 N}$.
\end{itemize}
 In particular, for a given $M$ there only finitely many $N$ (up to homeomorphism) such that $\widehat{\pi_1 M}\iso \widehat{\pi_1 N}$.
\end{introthm}
We also use our analysis to distinguish graph manifolds from mixed or totally hyperbolic manifolds, which may be seen as an extension of \cite{WZ14} stating that the profinite completion `sees' which geometries are involved in the geometric decomposition of an aspherical 3-manifold.
\begin{introthm}{introGvM}
Let $M$ be a mixed or totally hyperbolic 3-manifold and let $N$ be a graph manifold. Then $\pi_1 M$ and $\pi_1 N$ do not have isomorphic profinite completions.
\end{introthm}
While our techniques give strong control over the Seifert fibre spaces involved in a 3-manifold, it is more difficult to detect the hyperbolic pieces in a satisfactory way; we can only detect the presence of a hyperbolic piece, without giving any information about it. When there are no Seifert-fibred pieces at all, we can extract some information about the configuration of the hyperbolic pieces.
\begin{introthm}{intropurehyp}
Let $M$, $N$ be aspherical manifolds with $\widehat{\pi_1 M}\iso \widehat{\pi_1 N}$ and with JSJ decompositions $(X, M_\bullet)$, $(Y, N_\bullet)$ all of whose pieces are hyperbolic. Then the graphs $X$ and $Y$ have equal numbers of vertices and edges and equal first Betti numbers. 
\end{introthm}
This paper is structured as follows. In Sections \ref{secPrelims}, \ref{secgrpsacting}, and \ref{secGofGs} we recall background material and the elements of profinite Bass-Serre theory. In Section \ref{secAcylindrical} we prove some results about certain actions on profinite trees which, while known to experts, have not appeared in detail in the literature. Section \ref{secJSJ} forms the technical core of the paper, proving many useful lemmas and ultimately the first part of Theorem \ref{introGM}. Theorem \ref{introGvM} is proved in Section \ref{secGvM}, and Theorem \ref{intropurehyp} is proved in Section \ref{secPureHyp}. In Section \ref{secproP} we describe how, in certain situations, we may obtain a pro-$p$ analogue of our JSJ decomposition theorem. Finally in Section \ref{secGMrigid} we complete the proof of Theorem \ref{introGM}.

\begin{cnv}
For notational convenience, we will adopt the following conventions.
\begin{itemize}
\item From Section \ref{secgrpsacting} onwards, profinite groups will generally be given Roman letters $A,G,H,...$; the handful of discrete groups which appear will usually either be groups with standard symbols or the fundamental group of a space, so we will not reserve a particular set of symbols for them.
\item Profinite graphs will be given capital Greek letters $\Gamma, \Delta, T,...$, and abstract graphs will be $X,Y,...$
\item A finite graph of (profinite) groups will be denoted ${\cal G}= (X,G_\bullet)$ where $X$ is a finite graph and $G_\bullet$ will be an edge or vertex group.
\item There is a divergence of notation between profinite group theorists, for whom $\Z[p]$ denotes the $p$-adic integers, and manifold theorists who use the same symbol for the cyclic group of order $p$. We will follow the former convention and use $\Z/p\Z$ or $\Z/p$ to denote the cyclic group.
\item For us, a graph manifold will be required to be non-geometric, i.e. not a single \SFS{} or a Sol-manifold, hence not a torus bundle. A totally hyperbolic manifold will be a 3-manifold all of whose JSJ pieces are hyperbolic, and a mixed manifold will be a manifold with both Seifert-fibred and hyperbolic JSJ pieces. All 3-manifolds will be orientable.
\item The symbols \nsgp[f], \nsgp[o], \nsgp[{\mathnormal p}] will denote `normal subgroup of finite index', `open normal subgroup', `normal subgroup of index a power of $p$' respectively; similar symbols will be used for not necessarily normal subgroups.
\item For two elements $g,h$ of a group, $g^h$ will denote $h^{-1}gh$. That is, conjugation will be a right action.
\end{itemize}
\end{cnv}

The author would like to thank Marc Lackenby for carefully reading this paper, and is grateful to Henry Wilton for discussions regarding Lemma \ref{malnormalcusps} and for drawing the author's attention to \cite{GDJZ15}. The author was supported by the EPSRC and by a Lamb and Flag Scholarship from St John's College, Oxford.
\section{Preliminaries}\label{secPrelims}
\subsection{Profinite completions}
We first briefly recall the notion of a profinite group and a profinite completion. For more detail, the reader is referred to \cite{RZ00}.
\begin{defn}
A {\em profinite group} is an inverse limit of a system of finite groups. If all these finite groups are from a particular class ${\cal C}$ of finite groups, the limit is said to be a {\em pro-$\cal C$ group}.

Given a group $G$, its {\em profinite completion} is the profinite group $\hat G$ defined as the inverse limit of all finite quotient groups of $G$:
\[\hat G = \varprojlim_{N\nsgp[f] G} G/N\]
The {\em pro-$\cal C$ completion} is the pro-$\cal C$ group $\hat G_{\cal C}$ defined as the inverse limit of those finite quotients of $G$ which are in the class $\cal C$.
\end{defn}

In this paper, we will only refer to those classes $\cal C$ comprising all finite groups whose orders are only divisible by primes from a certain set of primes $\pi$. The term `pro-$\cal C$' is then replaced by `pro-$\pi$'. When $\pi$ is the set of all primes, we of course get the full profinite completion. We further abbreviate `pro-$\{p\}$' to `pro-$p$'. The pro-$p$ completion is usually denoted \proP{G}.

Typically, we only take the profinite completion of a residually finite group; then the natural map $G\to \hat G$ will be injective. A {\em (topological) generating set} for a profinite group $H$ is a subset $X\subseteq H$ such that the subgroup abstractly generated by $X$ is dense in $H$.

The profinite completion was determined by the inverse system of all finite quotients. In fact, by a standard argument (see \cite{DFPR}) two finitely generated groups have the same profinite completion if and only if the sets of isomorphism types of finite groups which can arise as finite quotients of the two groups are the same. 

For a profinite group $G=\varprojlim Q_i$ where the $Q_i$ are finite, the kernels of the maps $G\to Q_i$ form a basis for the topology at 1, and so they are open subgroups of $G$. A deep result of Nikolov and Segal \cite{NS07} shows that for a finitely generated profinite group, all finite index subgroups are open; that is, the group structure determines a unique topology giving $G$ the structure of a profinite group. Thus any abstract isomorphism of finitely generated profinite groups is a topological isomorphism also.
   
The profinite completion of the integers plays, as one might expect, a central role in much of the theory. It is the inverse limit
\[\hat\Z=\varprojlim \Z/n\]
of all finite cyclic groups. The pro-$\pi$ completion, denoted \Z[\pi], is the inverse limit of those finite cyclic groups whose orders only involve primes from $\pi$. Because of the Chinese Remainder Theorem, these finite cyclic groups split as products of cyclic groups of prime power order, and these splittings are natural with respect to the quotient maps $\Z/mn\to\Z/n$. It follows that the profinite completion of $\Z$ splits as the direct product, over all primes $p$, of the rings of $p$-adic integers
\[\Z[p] = \varprojlim \Z/p^i\]
Similarly
\[\Z[\pi] = \prod_{p\in \pi} \Z[p]\]
This splitting as a direct product, which of course is not a feature of $\Z$ itself, means that ring-theoretically $\hat\Z$ behaves less well than $\Z$. In particular, there exist zero-divisors, which are precisely those elements which vanish under some projection to \Z[p]. The rings \Z[p] themselves do not have any zero-divisors, and $\Z$ injects into each \Z[p] (for every natural number $n$ and prime $p$, some power of $p$ does not divide $n$), so no element of \Z[] is a zero-divisor in $\hat \Z$. Furthermore, the group of units of $\hat \Z$ is rather large, being the inverse limit of the multiplicative groups of the rings $\Z/n$. In particular ${\rm Aut}(\hat\Z)$ is a profinite group much larger than ${\rm Aut}(\Z)$. 

Elements of profinite groups may be raised to powers with exponents in $\hat \Z$. For if $G$ is a profinite group and $x\in G$, the map $1\mapsto x$ extends to a map $\Z\to G$ by the universal property of \Z[], which by continuity extends to a map $\hat\Z\to G$ (that is, $\hat\Z$ is a free profinite group on the element 1). The image of $\lambda\in\Z$ under this map is then denoted $x^\lambda$. This operation has all the expected properties; see Section 4.1 of \cite{RZ00} for more details.

As we are discussing $\hat\Z$, this seems a fitting place to include the following easy lemma.
\begin{lem}\label{nonzerodiv}
If $\lambda\in\hat\Z$, $n\in\Z\smallsetminus\{0\}$, and $\lambda n\in \Z$, then $\lambda\in\Z$. In particular if $\lambda\in\hat\Z{}^{\!\times}$ then $\lambda=\pm 1$.
\end{lem}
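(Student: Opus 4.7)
The plan is to deduce both halves of the lemma from the remark (already made in the paper) that nonzero integers are non-zero-divisors in $\hat\Z$.

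For the main statement, the strategy is to first upgrade the hypothesis $\lambda n = m \in \Z$ to a divisibility statement $n\mid m$ in $\Z$, and then cancel $n$. The key observation is that the natural ring map $\Z/n\Z \to \hat\Z/n\hat\Z$ is an isomorphism: this is essentially immediate from the description of $\hat\Z$ as the inverse limit of the finite rings $\Z/k\Z$, and can alternatively be read off from the product decomposition $\hat\Z = \prod_p \Z[p]$ together with $\Z[p]/n\Z[p] \iso \Z/p^{v_p(n)}\Z$ and the Chinese Remainder Theorem. Granted this, $m = \lambda n$ has zero image in $\hat\Z/n\hat\Z$, so $n\mid m$ in $\Z$; setting $q = m/n \in \Z$ we then have $(\lambda - q)n = 0$ in $\hat\Z$, and since $n$ is a non-zero-divisor, $\lambda = q \in \Z$.

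For the ``in particular'' clause, I read it as the further observation that if, in addition to the hypotheses of the first part, $\lambda$ lies in $\hat\Z{}^{\!\times}$, then $\lambda=\pm 1$. In this case $\lambda \in \Z$ by the first part, so $\lambda\in\Z\smallsetminus\{0\}$, and its inverse $\mu\in\hat\Z$ satisfies $\mu\cdot\lambda = 1\in\Z$. Applying the first part once more, now with $\mu$ playing the role of the element of $\hat\Z$ and $\lambda$ the nonzero integer, yields $\mu\in\Z$; hence $\lambda\mu = 1$ in $\Z$, forcing $\lambda = \pm 1$.

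Neither step presents serious difficulty; the only mild subtlety is the identification $\hat\Z/n\hat\Z\iso\Z/n\Z$, which follows directly from the definitions recalled above.
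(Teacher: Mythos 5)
Your argument is correct and follows essentially the same route as the paper: the paper's proof likewise observes that $\lambda n$ is an integer congruent to $0$ modulo $n$ (your $\Z/n\Z\iso\hat\Z/n\hat\Z$ step, left implicit there), writes $\lambda n=ln$ with $l\in\Z$, and cancels the non-zero-divisor $n$. Your explicit treatment of the ``in particular'' clause (applying the first part to the inverse) matches the intended reading, which the paper leaves to the reader.
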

\begin{proof}
Since $\lambda n$ is an integer congruent to 0 modulo $n$, there exists $l\in \Z$ such that $\lambda n=ln$. Since $n$ is not a zero-divisor in $\hat\Z$, it follows that $\lambda=l\in\Z$.
\end{proof}
\subsection{The profinite topology}
Whenever profinite properties of groups are discussed, it is usually necessary to have some control over subgroup separability. Here we recall prior results that will be used heavily, and often without comment, in the sequel.
\begin{defn}
The {\em (full) profinite topology} on a discrete group $G$ is the topology induced by the map $G\to\hat G$. A subset of $G$ is {\em separable} if and only if it is closed in the profinite topology.
\end{defn}
In particular, a group is residually finite if and only if the subset $\{1\}$ is separable. If every finite-index subgroup $H_1\sbgp[f] H$ of a subgroup $H\leq G$ is separable in $G$, then it follows that the natural map $\hat H\to \bar H^G$ is in fact an isomorphism. If this is the case we say that $G$ {\em induces the full profinite topology on $H$}. Seifert fibred and hyperbolic 3-manifold groups have very good separability properties; specifically they are LERF, meaning that every finitely generated subgroup is separable. In this case all finitely generated subgroups $H$ have $\hat H\iso \bar H$.
 
 Unfortunately graph manifold groups are not in general LERF; in fact in a sense a generic graph manifold group is not LERF. See \cite{BKS87}, \cite{niblowise}. However, those subgroups of primary concern are well behaved in the profinite topology. In particular:
\begin{thmquote}[Hamilton \cite{Ham01}]
Let $M$ be a Haken 3-manifold. Then the abelian subgroups of $\pi_1 M$ are separable in $\pi_1 M$ (and thus $\pi_1 M$ induces the full profinite topology on them).
\end{thmquote}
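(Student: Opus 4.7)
The plan is to use the JSJ decomposition of $M$ and reduce the question of separability of an abelian subgroup $A \leq \pi_1 M$ to separability of abelian subgroups inside a single JSJ piece, via a standard graph-of-groups combination argument.

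First I would present $\pi_1 M = \pi_1(X, M_\bullet)$ as a graph of groups whose vertex groups are fundamental groups of Seifert fibred or hyperbolic pieces and whose edge groups are $\Z^2$ (the JSJ tori), with associated Bass--Serre tree $T$. An abelian $A \leq \pi_1 M$ acts on $T$, and since $A$ is abelian it must either (i) fix a vertex, so $A$ is conjugate into a vertex group $G_v$; (ii) fix an edge, so $A$ is conjugate into a $\Z^2$ edge group; or (iii) stabilise a line $L$ without global fixed point, in which case the pointwise-fix kernel $K$ lies inside every edge stabiliser along $L$ and thus $K \leq \Z^2$, while $A/K$ is infinite cyclic. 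Ruling out $\Z^3$ subgroups for non-toral Haken manifolds via the piece-by-piece structure, case (iii) likewise forces $A$ to be conjugate into a vertex group. So one reduces to $A \leq G_v$.

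The second step is to show abelian subgroups are separable in every vertex group. For Seifert fibred pieces, Scott's theorem gives that $\pi_1 M_v$ is LERF, so every finitely generated subgroup (in particular every abelian one) is separable. For hyperbolic pieces the abelian subgroups are cyclic or peripheral $\Z^2$; the peripheral case is handled by standard covering constructions using the convex core and the geometric finiteness of the cusps, while the separability of non-peripheral cyclic subgroups is the deepest input, requiring bespoke arguments combining residual finiteness of finite-volume hyperbolic 3-manifold groups with a controlled Dehn filling / double coset argument. The third step is the combination theorem: using efficiency of the JSJ and the fact that the $\Z^2$ edge groups are themselves separable in $\pi_1 M$, one inductively builds a finite-index subgroup of $\pi_1 M$ whose intersection with $G_v$ already separates $A$ from a given $g \notin A$, extending the separation across edge groups one at a time using the double-coset separability of $\Z^2$. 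The ``thus $\pi_1 M$ induces the full profinite topology on $A$'' clause follows because every finite-index subgroup of an abelian $A$ is a finite union of cosets and hence separable.

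The main obstacle is proving separability of non-peripheral cyclic subgroups in the hyperbolic vertex groups: at the time of \cite{Ham01} this did not follow from LERF for hyperbolic 3-manifold groups (which was established much later via the work of Agol and Wise), so a direct cover-and-unwrap argument adapted to cyclic subgroups is needed. Everything else — the Bass--Serre reduction, Scott's LERF for Seifert pieces, separability of peripheral tori, and the final combination step — is essentially bookkeeping once that key ingredient is in place.
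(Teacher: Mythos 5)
First, a point of comparison: the paper does not prove this statement at all — it is quoted as background, with the proof living in Hamilton's paper \cite{Ham01}. Hamilton's actual argument does follow the broad shape you describe (decompose along the JSJ tori, handle Seifert pieces via Scott's LERF theorem, handle hyperbolic pieces separately, then combine), but her treatment of the hyperbolic pieces is an algebraic congruence argument — conjugating the Kleinian group into $\mathrm{SL}_2$ of a ring of $S$-integers in a number field and separating abelian subgroups using congruence quotients — not a Dehn-filling or cover-and-unwrap argument, and the combination across the decomposition is itself a substantial double-coset separability argument rather than bookkeeping (recall, as this paper stresses, that graph manifold groups are generally \emph{not} LERF, so no soft combination principle is available).

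Beyond that, your reduction step contains a genuine error. An abelian subgroup $A\leq \pi_1 M$ acting on the Bass--Serre tree of the JSJ decomposition need not be conjugate into a vertex group: in your case (iii) the group $A$ acts by translations along a line (possibly with kernel $K$ inside the $\Z^2$ edge groups), and such subgroups are typically \emph{not} elliptic. The simplest example is the infinite cyclic subgroup generated by an element of a graph manifold group that crosses a JSJ torus; it fixes no vertex and is conjugate into no vertex or edge group, and excluding $\Z^3$ subgroups has no bearing on this. These ``hyperbolic'' abelian subgroups are exactly one of the cases any correct proof must separate directly (using the tree action together with separability properties of the edge groups), so they cannot be folded into the vertex-group case as you propose. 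Combined with the fact that the key hyperbolic-piece input is only named rather than proved, the proposal as written is a plausible outline of the strategy but not a proof: the two hard points — non-elliptic abelian subgroups and the hyperbolic/combination steps — are precisely where the content of \cite{Ham01} lies. (A small further remark: the parenthetical clause about the induced topology follows simply because every finite-index subgroup of $A$ is again an abelian subgroup of $\pi_1 M$ and is therefore separable by the theorem itself; the ``finite union of cosets'' phrasing argues in the wrong direction.)
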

\begin{theorem}[Wilton and Zalesskii \cite{WZ10}]\label{JSJefficient}
Let $M$ be a closed, orientable, irreducible 3-manifold, and let $(X, M_\bullet)$ be the graph of spaces corresponding to the JSJ decomposition. Then the vertex and edge groups $\pi_1 M_\bullet$ are closed in the profinite topology on $\pi_1 M$, and $\pi_ 1 M$ induces the full profinite topology on them. 
\end{theorem}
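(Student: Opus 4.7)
The plan is to establish both conclusions simultaneously by proving that the JSJ graph-of-groups decomposition is \emph{efficient} in the sense of profinite Bass-Serre theory. Efficiency of $(X,M_\bullet)$ with respect to the class of all finite groups means (i) $\pi_1 M$ is residually finite, (ii) each vertex and edge group is closed in the profinite topology on $\pi_1 M$, and (iii) $\pi_1 M$ induces the full profinite topology on these subgroups; conditions (ii) and (iii) together give exactly the theorem. Residual finiteness of $\pi_1 M$ is Hempel's theorem, and the edge groups, being copies of $\Z^2$, are handled directly by Hamilton's theorem as quoted.

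For the vertex groups I would proceed piece by piece using the geometry of the JSJ decomposition. Each $\pi_1 M_v$ is either a Seifert-fibered group or the fundamental group of a hyperbolic 3-manifold with toral boundary; both classes are LERF (the Seifert case classically, the hyperbolic case via the work of Wise and Agol). So within any single piece the full profinite topology on a finitely generated subgroup coincides with the induced topology, and in particular peripheral subgroups are separable.

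The heart of the argument is transferring this local control to the ambient group. For closedness of $\pi_1 M_v$, given $g \in \pi_1 M \smallsetminus \pi_1 M_v$ I would appeal to the action on the Bass-Serre tree $T$: if $g$ acts hyperbolically on $T$ it is separated from $\pi_1 M_v$ by any finite quotient of $\pi_1 M$ whose Bass-Serre graph has sufficiently large girth, while if $g$ stabilises a different vertex $v'$ one uses LERF in $\pi_1 M_{v'}$ to produce a local finite quotient distinguishing $g$ from the identity there, and then extends this quotient across the graph of groups. For the inducement assertion one must show that every $K \sbgp[f] \pi_1 M_v$ is of the form $L \cap \pi_1 M_v$ for some $L \sbgp[f] \pi_1 M$, which amounts to assembling compatible finite covers of all the JSJ pieces.

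The main obstacle is precisely this compatibility along edge tori: a prescribed finite cover of $M_v$ restricts to some cover of its boundary tori, and to obtain a cover of $M$ one must propagate matching covers through the adjacent pieces, all the way around every cycle in $X$. The resolution uses peripheral separability (a direct consequence of Hamilton's theorem) together with LERF on each piece to iteratively refine the chosen covers of adjacent vertex spaces until they agree on every edge torus. Running this refinement inductively through the finite JSJ graph, while using the torus separability to promote agreement on peripheral data to agreement on honest finite-sheeted covers, yields the required $L$ and completes the proof of efficiency.
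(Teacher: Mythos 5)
First, a point of comparison: the paper does not prove this statement at all — it is quoted from Wilton and Zalesskii \cite{WZ10} — so your proposal has to be judged against the argument in that reference. Your overall frame (reduce everything to efficiency of the JSJ graph of groups, get residual finiteness from Hempel, handle the edge tori by Hamilton's abelian-subgroup separability, and use LERF of the individual Seifert-fibred and hyperbolic pieces) is the right one and matches how the cited proof begins.

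The genuine gap is the compatibility step, which you describe as iteratively refining covers of adjacent pieces ``until they agree on every edge torus''. As stated this does not work: separability and LERF only let you find covers whose boundary behaviour is \emph{at least as deep} as a prescribed one, never covers inducing an \emph{exactly} prescribed cover on a boundary torus; so when you refine the cover of one piece to match a neighbour along one torus you lose control of its other boundary tori, the requirement propagates outward, and around a cycle of $X$ it returns to the starting piece as a new and possibly incompatible demand — your induction through the finite graph has no reason to terminate in a consistent family. The missing idea, which is the real content of the Wilton--Zalesskii argument (going back to Hempel's residual-finiteness construction and to Hamilton), is to make propagation unnecessary: for every $n$, each Seifert-fibred or hyperbolic piece $M_v$ admits a finite-sheeted cover whose restriction to \emph{every} boundary torus is the cover corresponding to the characteristic subgroup $n\mathbb{Z}\times n\mathbb{Z}\leq\mathbb{Z}^2$. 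This is a nontrivial theorem about the pieces, not a formal consequence of LERF. Because characteristic subgroups of $\mathbb{Z}^2$ are preserved by whatever gluing matrices the JSJ decomposition uses, such covers agree along all edge tori simultaneously, and assembling them produces finite-index subgroups of $\pi_1 M$ meeting the vertex and edge groups in arbitrarily deep, prescribed finite-index subgroups; closedness and the inducement of the full profinite topology then follow. Your separate argument for closedness of a vertex group has the same problem in disguise: finite quotients of $\pi_1 M$ whose induced decomposition has large girth, or which ``extend a local quotient across the graph of groups'', are exactly what the characteristic-boundary covers provide, so invoking them before that lemma is available is circular.
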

\subsection{Seifert fibre spaces and graph manifolds}\label{SFSprelims}
In the literature there are two inequivalent definitions of what is meant by a `graph manifold'. In all cases, a graph manifold is an irreducible manifold whose JSJ decomposition consists only of Seifert-fibred pieces. The JSJ decomposition is by definition minimal, so the fibrings of adjacent pieces of the decomposition never extend across the union of those pieces. Some authors additionally require that a graph manifold is not geometric; i.e., it is not itself Seifert-fibred and is not a Sol manifold. Hence the JSJ decomposition is non-trivial and the graph manifold is not finitely covered by a torus bundle. In this paper we do impose this constraint. Furthermore we shall deal only with orientable graph manifolds.

At the other end of the spectrum, a manifold whose JSJ decomposition is non-trivial and has no Seifert-fibred pieces at all will be called `totally hyperbolic'. When the JSJ decomposition has at least one Seifert-fibred piece and at least one hyperbolic piece the manifold is called `mixed'.

We now briefly recall those facts about fundamental groups of Seifert fibre spaces which will be necessary. For a more full account, see \cite{scott83} or \cite{brinnotes}.

The fundamental group of a \SFS{} $M$ has a short exact sequence
\[ 1\to \gp{h}\to \pi_1 M\to \ofg[O] \to 1\]
where $O$ is the base orbifold and $h$ is the homotopy class of a regular fibre. The subgroup generated by this fibre (which may be finite in general) is normal in $\pi_1 M$. Moreover, either it is central in $\pi_1 M$ (when $O$ is orientable) or is contained in some index 2 subgroup of $\pi_1 M$ in which it is central (when $O$ is non-orientable). The orbifold fundamental group \ofg[O] is a Fuchsian group. The \SFS{}s arising in the JSJ decomposition of a graph manifold have boundary consisting of at least one incompressible torus comprised of fibres; in this case the subgroup generated by $h$ is infinite cyclic and \ofg[O] is a free product of cyclic groups. 

The base orbifold has either positive, zero, or negative Euler characteristic. If the Euler characteristic is negative, then there is a unique maximal normal cyclic subgroup of $\pi_1 M$, called the {\em (canonical) fibre subgroup}. 

No positive characteristic base orbifolds can occur when the boundary is incompressible and non-empty. The \SFS{}s with boundary which have Euclidean base orbifold are precisely $\sph{1}\times\sph{1}\times\I$ and the orientable $\I$-bundle over a Klein bottle. The first of these can never arise in a JSJ decomposition of anything other than a torus bundle and we will henceforth ignore it. The second has two maximal normal cyclic subgroups (`fibre' subgroups), one central (with quotient the infinite dihedral group) and one not central (with quotient \Z, here being the fundamental group of a M\"obius band). We call the first of these fibre subgroups the {\em canonical} fibre subgroup. We refer to such pieces of the JSJ decomposition of a graph manifold as `minor', and to those with base orbifold of negative characteristic as `major' (the terms `small \SFS' and `large \SFS' being already entrenched in the literature with a rather different meaning). 

By definition, all pieces of the JSJ decomposition of a graph manifold are \SFS{}s, and by minimality of the decomposition the fibres of two adjacent Seifert fibred pieces do not match, even up to isotopy. Thus the canonical fibre subgroups of adjacent \SFS{}s intersect trivially in the fundamental group of the graph manifold; if one piece is minor, neither of its two fibre subgroups intersect the fibre subgroup of the adjacent piece non-trivially.

Note that two minor pieces can never be adjacent; for each of these having only one boundary component, the whole graph manifold would then be just two minor pieces glued together. Each has an index 2 cover which is a copy of $\sph{1}\times\sph{1}\times\I$, so our graph manifold would be finitely covered by a torus bundle, and would thus be either a Euclidean, Nil or Sol manifold; but we required our graph manifolds to be non-geometric.

Many of these properties still hold in the profinite completion; for instance, when the base orbifold is of negative Euler characteristic, Theorem 6.4 of \cite{Wilkes15} guarantees that we still have a unique maximal procyclic subgroup which is either central or is central in an index 2 subgroup. We may directly check that the profinite completion of the Klein bottle group also still has just two maximal normal procyclic subgroups, one of which is central. Hence our notion of (canonical) fibre subgroup, as a maximal procyclic group with the above property, carries over to the profinite world.

Seifert fibre spaces are well-controlled by their profinite completions. 
\begin{theorem}[Wilkes \cite{Wilkes15}]
Let $M$ be a closed orientable Seifert fibre space. Then:
\begin{itemize}
\item If $M$ has the geometry $\sph{3}$, $\sph{2}\times\R$, $\E^3$, {\rm Nil}, or $\SLR$ then $M$ is profinitely rigid.
\item If $M$ has the geometry \HR, and is therefore a surface bundle over a hyperbolic surface $\Sigma$ with periodic monodromy $\phi$, those orientable 3-manifolds with the same profinite completion as $M$ are precisely the surface bundles over $\Sigma$ with monodromy $\phi^k$, for $k$ coprime to the order of $\phi$.
\end{itemize}
\end{theorem}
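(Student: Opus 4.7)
The approach has two layers: first show that any closed orientable $3$-manifold $N$ with $\widehat{\pi_1 N}\iso\widehat{\pi_1 M}$ is itself a Seifert fibre space of the same geometry as $M$, then reconstruct the Seifert invariants. The first part is handled by the fact that the geometry is a profinite invariant of closed $3$-manifolds (or, more directly, that Seifert fibreness is detected by the presence of a maximal normal procyclic subgroup which is central, or central in an index-$2$ subgroup). Thus it suffices to classify, within each fixed geometry, how $\widehat{\pi_1 M}$ determines the Seifert invariants.

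Within $\widehat{\pi_1 M}$ one identifies the closure $F\iso\hat\Z$ of the canonical fibre subgroup as the unique maximal normal procyclic subgroup with the central (or virtually central) property. The quotient $\widehat{\pi_1 M}/F$ is canonically isomorphic to the profinite completion of the orbifold fundamental group of the base orbifold $O$; the genus of $O$ may be extracted from the torsion-free rank of the abelianization and the cone orders from the orders of finite-order elements. The remaining invariant is the Euler number $e$ of the Seifert fibration. Since $H_1(M;\Z)$ is a profinite invariant and is given by a standard formula in terms of the genus, the cone data $(a_i,b_i)$ and $e$, knowing these other pieces pins down $e$ by a linear equation; in the five geometries $\sph{3}$, $\sph{2}\times\R$, $\E^3$, $\Nil$, $\SLR$ the Euler number is nonzero and the equation admits a unique integer solution, establishing profinite rigidity.

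For the \HR{} case $e=0$ and $M$ is a mapping torus $M_\phi$ of a finite-order homeomorphism $\phi$ of $\Sigma$ of order $n$, so $\widehat{\pi_1 M_\phi}\iso\widehat{\pi_1\Sigma}\rtimes_{\phi_*}\hat\Z$, where $\hat\Z$ acts through $\Z/n$. Forwards: for any unit $u\in\hat\Z{}^{\!\times}$ the map multiplying the $\hat\Z$-factor by $u$ yields an isomorphism onto $\widehat{\pi_1\Sigma}\rtimes_{\phi_*^{u\bmod n}}\hat\Z$, and since $u\bmod n$ ranges over all of $(\Z/n)^\times$, every $M_{\phi^k}$ with $\gcd(k,n)=1$ shares the profinite completion. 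Backwards: any profinite isomorphism $\widehat{\pi_1 M_\phi}\iso\widehat{\pi_1 M_\psi}$ preserves $F$, descends to an isomorphism of base orbifold groups (hence identifies the underlying surfaces), and multiplies a generator of $\hat\Z$ by a unit; comparing induced actions on $\widehat{\pi_1\Sigma}$ identifies $\psi$ as a power $\phi^k$ with $\gcd(k,n)=1$.

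The main obstacle is the backward direction in the \HR{} case: one must show that the abstract automorphism of $\widehat{\pi_1\Sigma}$ induced by the profinite isomorphism really does arise from a mapping class of $\Sigma$, so that the two finite-order monodromies can be compared on the nose modulo conjugation and a unit scaling in $\hat\Z{}^{\!\times}$. This rests on a rigidity statement for the action of finite-order mapping classes on the profinite completion of the surface group, and on careful tracking of conjugation ambiguities in the semidirect product structure.
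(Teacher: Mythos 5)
This statement is quoted from \cite{Wilkes15} rather than reproved here, but the surrounding discussion (the remark after Theorem \ref{Wilkeswbdy}) shows the actual mechanism: one identifies the fibre exact sequence inside the profinite completion, uses profinite rigidity of the base Fuchsian/orbifold group, and then classifies $\widehat{\pi_1 M}$ as a central extension of $\widehat{\ofg[O]}$ by $\hat\Z$ via a class in $H^2(\widehat{\ofg[O]},\hat\Z)$; an isomorphism of completions scales this class by $\mu^{-1}\lambda\in\hat\Z{}^{\!\times}$, and rigidity or Hempel-type flexibility is decided by which scaled classes are realised by genuine Seifert fibre spaces (with Lemma \ref{nonzerodiv} forcing integrality of the scale factor when the relevant invariant is a nonzero rational). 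Your outline shares the first steps (fibre subgroup, base orbifold), but your rigidity mechanism for the first bullet point is not correct. You assert that in the five geometries $\sph{3}$, $\sph{2}\times\R$, $\E^3$, $\Nil$, $\SLR$ the Euler number is nonzero; this is false for $\sph{2}\times\R$ and $\E^3$, which have Euler number zero (these geometries are exactly the $e=0$, $\oec\geq 0$ cases), so your ``unique integer solution of a linear equation'' argument cannot even get started there, and those cases need a separate argument (finite groups, respectively the six orientable flat manifolds, are handled directly). Moreover, even where $e\neq 0$, the claim that $H_1(M;\Z)$ together with the genus and cone orders pins down $e$ is not a proof: $H_1$ does not determine the individual Seifert invariants $(a_i,b_i)$ (different tuples $b_i$ with the same cone orders can give the same abelianisation), and what an isomorphism of profinite completions actually preserves is not $H_1$ alone but the whole central extension class, up to the $\hat\Z{}^{\!\times}$-scaling described above; the correct conclusion is that a nonzero rational Euler number can only be scaled by $\pm1$, whence rigidity.

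In the \HR{} case your forward direction (rescaling the $\hat\Z$ factor of $\widehat{\pi_1\Sigma}\rtimes\hat\Z$ by a unit congruent to $k$ mod $n$) is essentially Hempel's argument and is fine. But the backward direction is precisely where you stop: you note as ``the main obstacle'' that one must show the induced automorphism of $\widehat{\pi_1\Sigma}$ is controlled well enough to compare the two finite-order monodromies, and you do not supply this. That is the substantive content of the theorem, and the established proof does not proceed by realising the induced automorphism by a mapping class at all; instead it again works with the Seifert structure (base orbifold $O=\Sigma/\phi$, not $\Sigma$ -- note your sketch conflates the surface fibre with the base orbifold), invokes the fact that Fuchsian groups are determined by their profinite completions to identify $O$, shows that the induced ``exotic'' automorphism of $\widehat{\ofg[O]}$ acts on $H^2(\widehat{\ofg[O]},\hat\Z)$ by multiplication by a unit $\mu$, and then reads off the monodromy exponent $k\equiv\mu^{-1}\lambda$ modulo the order of $\phi$ from the resulting equality $\lambda\zeta_M=\mu\zeta_N$ of extension classes. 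As it stands, your proposal has a genuine gap in both bullet points.
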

The non-rigid examples of geometry \HR{} were found by Hempel \cite{hempel14}.

For Seifert fibre spaces with boundary, such as those arising in the JSJ decomposition of a graph manifold, we naturally require some conditions on the boundary. 
\begin{defn}\label{exauto}
Let $O$ be an orientable 2-orbifold with boundary, with fundamental group
\[B= \left< a_1,\ldots, a_r, e_1, \ldots, e_s, u_1, v_1, \ldots, u_g,v_g\mid a_i^{p_i}\right>\]
where the boundary components of $O$ are represented by the conjugacy classes of the elements $e_1, \ldots, e_s$ together with
\[ e_0 = \left(a_1 \cdots a_r e_1\cdots e_s [u_1, v_1]\cdots [u_g,v_g]\right)^{-1} \]
Then an {\em exotic automorphism of $O$ of type $\mu$} is an automorphism $\psi\colon\hat B\to\hat B$ such that $\psi(a_i) \sim a_i^\mu$ and $\psi(e_i)\sim e_i^\mu$ for all $i$, where $\sim$ denotes conjugacy in $\hat B$.
Similarly, let $O'$ be a non-orientable 2-orbifold with boundary, with fundamental group 
\[B'= \left< a_1,\ldots, a_r, e_1, \ldots, e_s, u_1,\ldots, u_g\mid a_i^{p_i}\right>\]
where the boundary components of $O'$ are represented by the conjugacy classes of the elements $e_1, \ldots, e_s$ together with
\[ e_0 = \left( a_1 \cdots a_r e_1\cdots e_su_1^2\cdots u_g^2\right)^{-1} \]
Let $o\colon \hat B'\to \{\pm 1\}$ be the orientation homomorphism of $O'$. Let $\sigma_0, \ldots, \sigma_s\in\{\pm1\}$. Then an {\em exotic automorphism of $O'$ of type $\mu$ with signs $\sigma_0, \ldots, \sigma_s$} is an automorphism $\psi\colon\hat B'\to\hat B'$ such that $\psi(a_i) \sim a_i^\mu$ and $\psi(e_i)= (e_i^{\sigma_i\mu})^{g_i}$ for all $i$ where $o(g_i)=\sigma_i$ for all $i$.
\end{defn}
\begin{rmk}
The reader may find the term `exotic automorphism of $O$' a little jarring as the automorphism really acts on $\hat B$. This name was chosen to emphasize that this notion depends on an identification of the group as the fundamental group of a specific orbifold, with specific elements representing boundary components. For example, an exotic automorphism of a three-times punctured sphere is a rather different thing from an exotic automorphism of a once-punctured torus: even though both groups are free of rank 2, there are different numbers of boundary components to consider. 

For the same reason, there is a canonical map to a cyclic group of order 2 giving the orientation homomorphism- this is not a characteristic quotient of a free group, but is uniquely defined when an identification with an orbifold group is chosen.
\end{rmk}
\begin{theorem}[Wilkes \cite{Wilkes15}]\label{Wilkeswbdy}
Let $M, N$ be Seifert fibre spaces with boundary components $\bdy M_1,\ldots,\bdy M_n,\bdy N_1,\ldots,\bdy N_n$. Suppose $\Phi$ is an isomorphism of group systems
\[ \Phi: (\widehat{\pi_1 M};\widehat{\pi_1\bdy M_1},\ldots, \widehat{\pi_1\bdy M_n})\to (\widehat{\pi_1 N};\widehat{\pi_1\bdy N_1},\ldots, \widehat{\pi_1\bdy N_n}) \]
Then:
\begin{itemize}
\item If $M$ is a minor \SFS, then $M\iso N$.
\item If $M$ is a major \SFS, then the base orbifolds of $M$, $N$ may be identified with the same orbifold $O$ such that $\Phi$ splits as an isomorphism of short exact sequences
\[\begin{tikzcd}
1\ar{r} & \hat\Z\ar{r} \ar{d}{\cdot \lambda} & \widehat{\pi_1 M} \ar{r} \ar{d}{\Phi} & \widehat{\ofg[O]} \ar{r} \ar{d}{\phi} & 1 \\
1\ar{r} & \hat\Z\ar{r} & \widehat{\pi_1 N} \ar{r} & \widehat{\ofg[O]} \ar{r} & 1
\end{tikzcd}\]
where $\lambda$ is some invertible element of $\hat\Z$ and $\phi$ is an exotic  automorphism of $O$ of type $\mu$.

Hence if $N$ is a surface bundle over the circle with fibre a hyperbolic surface $\Sigma$ with periodic monodromy $\psi$, then $M$ is also such a surface bundle with monodromy $\psi^k$ where $k$ is congruent to $\kappa=\mu^{-1}\lambda$ modulo the order of $\psi$.   
\end{itemize} 
\end{theorem}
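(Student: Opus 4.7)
The plan is to exploit the short exact sequence
\[ 1 \to \gp{h} \to \widehat{\pi_1 M} \to \widehat{\ofg[(O_M)]} \to 1 \]
where $\gp{h}$ is the canonical fibre subgroup. By the remarks preceding the theorem, in the major case $\gp{h}$ has a purely group-theoretic characterisation as either the unique maximal normal procyclic subgroup (orientable base) or the central procyclic subgroup of the unique index-2 subgroup in which some normal procyclic is central (non-orientable base). Either way, this characterisation is invariant under the isomorphism $\Phi$, so $\Phi$ restricts to an isomorphism of fibre subgroups $\hat\Z \to \hat\Z$ — that is, multiplication by some unit $\lambda \in \hat\Z{}^\times$ — and descends to an isomorphism $\phi$ of the profinite orbifold groups.

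Next I would reconstruct the base orbifold. Torsion elements of $\widehat{\ofg[(O_M)]}$ are, up to conjugacy, powers of the cone point generators $a_i$, so the multiset of cone-point orders $\{p_i\}$ is a profinite invariant. The peripheral system inherited from the boundary tori of $M$ supplies the number $s$ of boundary components and the conjugacy classes of the $e_i$, and Euler-characteristic considerations on the abelianisation recover the genus together with orientability. This lets me identify $O_M$ and $O_N$ with a single orbifold $O$, matched so that $\phi$ preserves the distinguished conjugacy classes of cone point and boundary generators.

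The technical heart of the argument is verifying the exotic-automorphism conditions. For each $i$, the image $\phi(a_i)$ has order $p_i$; since every torsion element of $\widehat{\ofg[O]}$ is conjugate into one of the cyclic groups $\gp{a_j}$ and the multiset $\{p_i\}$ has no repeats forcing ambiguity (or one works out the repeats carefully), one obtains $\phi(a_i) \sim a_i^{\mu_i}$ for some unit $\mu_i \in \hat\Z{}^\times$. Comparison in the abelianisation, using the defining relation $e_0 e_1 \cdots e_s \cdot a_1 \cdots a_r \cdot \prod [u_j,v_j] = 1$ together with the multiplier $\lambda$ on the central fibre, forces all the $\mu_i$ (and the corresponding exponents on the $e_i$) to agree with a single $\mu$, up to scalar multiples of the relations $a_i^{p_i}=1$. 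For non-orientable $O$, a boundary generator $e_i$ may only be conjugated to a power of itself by an element $g_i$ whose image under the orientation homomorphism $o$ records a sign $\sigma_i \in \{\pm 1\}$; pulling this back gives precisely the statement in Definition~\ref{exauto}. The surface-bundle consequence then follows from the observation that the monodromy is encoded by the conjugation action of a lift of the boundary on the fibre subgroup, and this action is conjugated by the element $\kappa = \mu^{-1}\lambda$ when one passes through $\Phi$.

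The main obstacle is the coherence of the single exponent $\mu$: one must show that the a priori independent units $\mu_i$ coming from different cone points and boundary components coincide, and in the non-orientable case that the signs $\sigma_i$ satisfy $o(g_i) = \sigma_i$ for a genuinely chosen conjugator. Finally, the minor case reduces to a direct calculation in the profinite Klein bottle group, whose structure together with the peripheral data determines $M$ uniquely.
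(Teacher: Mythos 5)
Most of your outline reconstructs facts that the paper simply imports from \cite{Wilkes15} (the splitting of $\Phi$ along the fibre sequence is Theorem 6.4 there, and the exotic-automorphism property comes from the proof of Theorem 6.7), and parts of it are sound: the intrinsic characterisation of the canonical fibre subgroup does force $\Phi$ to restrict to multiplication by a unit $\lambda$ and to descend to $\phi$, and your abelianisation trick genuinely does force a single $\mu$ in the orientable-base case, \emph{provided} you first note that the group-system hypothesis matches \emph{every} peripheral class, including the ``last'' boundary class $e_0=(a_1\cdots a_re_1\cdots e_s[u_1,v_1]\cdots)^{-1}$, so that $\phi(e_0)\sim e_0^{\mu_0}$; comparing $\phi_*[e_0]$ with $-\sum\phi_*[a_i]-\sum\phi_*[e_j]$ then gives $\mu_j=\mu_0$ and $\mu_i'\equiv\mu_0 \pmod{p_i}$. (In the non-orientable case the $u_k^2$ terms and the signs $\sigma_i$ spoil this comparison as stated; the paper instead reduces to the orientation double cover, a step you do not address, and your minor case is only a gesture.)

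The genuine gap is the final clause, which is exactly the point the paper's remark exists to nail down: \emph{why} the scale factor is $\kappa=\mu^{-1}\lambda$ rather than, say, $\lambda\mu$. Your justification --- that ``the monodromy is encoded by the conjugation action of a lift of the boundary on the fibre subgroup, and this action is conjugated by $\kappa$'' --- is not a proof and rests on a misconception: for a major \SFS{} with orientable base the fibre subgroup is \emph{central}, so there is no conjugation action on it to track, and the surface-bundle monodromy lives in the other short exact sequence (over $\pi_1\Sigma$), equivalently in the Seifert invariants $q_i$ or the class $\zeta\in H^2(\hat B;\hat\Z)$ of the central extension. To pin down the exponent you must (i) extract from the cone relations $a_i^{p_i}h^{q_i}=1$ the congruences $\mu q_i'\equiv\lambda q_i\pmod{p_i}$ (equivalently, show $\lambda\zeta_M=\mu\zeta_N$, keeping straight that $\phi$ acts \emph{contravariantly} on $H^2(\hat B;\hat\Z)$ by $\mu$ while $\lambda$ acts covariantly on coefficients), and (ii) use the computation (Hempel, Section 5 of \cite{hempel14}) that replacing a periodic monodromy $\psi$ by $\psi^k$ multiplies the fibre invariants, hence the extension class, by $k^{-1}$, not $k$. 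Without both inputs your argument cannot distinguish $\mu^{-1}\lambda$ from $\lambda\mu$, and since $\kappa$ feeds directly into the bipartite gluing conditions of Theorem \ref{GMrigid}, getting the direction of this twist wrong would corrupt the main theorem; this bookkeeping is the actual content of the paper's proof and is missing from yours.
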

\begin{rmk}
Since this precise theorem statement does not appear in \cite{Wilkes15} in quite this form, we should comment on how it arises, and in particular the point that $\kappa$ is $\mu^{-1}\lambda$ rather than just $\lambda\mu$.

First we remark that this theorem follows immediately from the case of orientable base orbifold. Note that $\Phi$ preserves the canonical index 2 subgroup given by the centraliser of the fibre, so the corresponding 2-fold covers $\tilde M$ and $\tilde N$ are related in the above fashion, for some labelling of the boundary components of each. The result then follows, noting that an automorphism of the base which induces an exotic automorphism on the index 2 orientation subgroup is itself an exotic automorphism.

So consider the case of orientable base orbifold. Let $B=\ofg[O]$. The fact that an isomorphism of fundamental groups gives an isomorphism of short exact sequences is the content of Theorem 6.4 of \cite{Wilkes15}. The profinite groups in question are then central extensions, given by cohomology classes 
\[\zeta_M, \zeta_N\in H^2(\hat B, \hat\Z) \]
This latter cohomology group is an abelian group of exponent dividing the order of the monodromy. The statement concerning surface bundles is equivalent to the statement that \[\zeta_N=\mu^{-1}\lambda \zeta_M\] 
To see this equivalence, one could write out presentations of the groups and hence compute the cohomology classes directly from the characterisation in Section 6.2 of \cite{Wilkes15}. See also Section 5 of \cite{hempel14}, where the computation of the fibre invariants of a surface bundle with monodromy $\psi^k$ is carried out. Note that raising the monodromy to a power $k$ actually raises the fibre invariants $(p_i, q_i)$, and hence the cohomology class, of the space to a power {\em inverse to} $k$ modulo the order of $\psi$- hence the order of $N$ and $M$ in the last part of the theorem.

From the proof of Theorem 6.7 of \cite{Wilkes15}, the automorphism $\phi$ of $\hat B$ is an exotic automorphism of type $\mu$ for some $\mu$, and the action of such a $\phi$ on cohomology is multiplication by $\mu$. Let $\Gamma$ be a central extension of $\hat B$ by $\hat\Z$ corresponding to the cohomology class $\mu\zeta_N$. Since the action of $\phi^{-1}$ on cohomology is multiplication by $\mu^{-1}$, and noting that this action is {\em contravariant}, we have  
\[(\phi^{-1})^\ast (\mu\zeta_N)= \zeta_N \]
and hence we have a short exact sequence of isomorphisms
\[\begin{tikzcd}
1\ar{r} & \hat\Z\ar{r} \ar{d}{\id} & \widehat{\pi_1 N} \ar{r} \ar{d}{\iso} & \hat B \ar{r} \ar{d}{\phi^{-1}} & 1 \\
1\ar{r} & \hat\Z\ar{r} & \Gamma \ar{r} & \hat B \ar{r} & 1
\end{tikzcd}\]
Precomposing this with the short exact sequence from the theorem there is an isomorphism
\[\begin{tikzcd}
1\ar{r} & \hat\Z\ar{r} \ar{d}{\cdot \lambda} & \widehat{\pi_1 M} \ar{r} \ar{d}{\iso} & \hat B \ar{r} \ar{d}{\id} & 1 \\
1\ar{r} & \hat\Z\ar{r} & \Gamma \ar{r} & \hat B \ar{r} & 1
\end{tikzcd}\]
which, since the map on $\hat\Z$ gives a {\em covariant} map on cohomology, says that the cohomology class representing $\Gamma$ is in fact $\lambda\zeta_M$. Hence
\[ \lambda\zeta_M=\mu\zeta_N\] 
as was claimed. 
\end{rmk}
\begin{defn}\label{defHempelpair}
If $M,N$ are as in the latter case of the above theorem, we say that $(M,N)$ is a {\em Hempel pair of scale factor $\kappa$}, where $\kappa= \mu^{-1}\lambda$. Note that $\kappa$ is only well-defined modulo the order of $\phi$, which may be taken to be the lowest common multiple of the orders of the cone points of $M$. Note that a Hempel pair of scale factor $\pm 1$ is a pair of homeomorphic \SFS{}s.
\end{defn}
\section{Groups acting on profinite graphs}\label{secgrpsacting}
\subsection{Profinite graphs}
We state here the definitions and basic properties of profinite graphs for convenience. The sources for this section are \cite{RZup}, Section 1 or \cite{RZ00p}, Sections 1 and 2. See these references for proofs and more detail.
\begin{defn}
An {\em abstract graph} $X$ is a set with a distinguished subset $V(X)$ and two retractions $d_0,d_1:X\to V(X)$. Elements of $V(X)$ are called vertices, and elements of $E(X)=X\smallsetminus V(X)$ are called edges. Note that a graph comes with an orientation on each edge.

If an abstract graph is in addition a profinite space $\Gamma$ (that is, an inverse limit of finite discrete topological spaces), $V(\Gamma)$ is closed and $d_0,d_1$ are in addition continuous, then $\Gamma$ is called a {\em profinite graph}. Note that $E(\Gamma)$ may not be closed.
\end{defn}
 A morphism of graphs $X$, $Y$ is a function $f:X\to Y$ such that $d_i f = fd_i$ for each $i$. In particular, $f$ sends vertices to vertices, but may not send edges only to edges. A morphism of profinite graphs $\Gamma$, $\Delta$ is a map $f:\Gamma\to\Delta$ which is a morphism of abstract graphs and is continuous.
 
   A profinite graph may equivalently be described as an inverse limit of finite abstract graphs $X_i$, and $V(\Gamma)=\varprojlim V(X_i)$.  If $E(\Gamma)$ happens to be closed, we can choose the inverse system $X_i$ so that the transition maps send edges to edges, and then $E(\Gamma)=\varprojlim E(X_i)$. If $Y$ is another finite graph, any morphism of $\Gamma$ onto $Y$ factors through some $X_i$.
 
A {\em path of length $n$} in a graph $X$ is a morphism into $X$ of a finite graph consisting of $n$ edges $e_1,\ldots, e_n$ and $n+1$ vertices $v_0,\ldots, v_n$ (with some choice of orientations on the edges) such that the endpoints of each edge $e_i$ are $v_{i-1}, v_i$. Note that such a morphism into a profinite graph is automatically continuous. 

An abstract graph $X$ is {\em path-connected} if any two vertices lie in the image of some path in $X$. A profinite graph $\Gamma$ is {\em connected} if any finite quotient graph is path-connected. Equivalently a connected profinite graph is the inverse limit of path-connected finite graphs.

A path-connected profinite graph is connected. If the set of edges of a connected profinite graph is closed, then each vertex must have an edge incident to it.

\begin{example}[A connected profinite graph with a vertex with no edge incident to it]
Let $I=\N\cup\tilde\N\cup\{\infty\}$ be the one-point compactification of two copies $\N,\tilde\N$ of the natural numbers. Let $V(I)=\N\cup\{\infty\}$, and define $d_0,d_1$ by $d_i(n)=n$ for $n\in\N$, $d_i(\infty)=\infty$, and $d_0(\tilde n)=n$, $d_1(\tilde n)=n+1$ for $\tilde n\in\tilde\N$. This is the inverse limit of the system of finite line segments $I_n$ of length $n$, where the maps $I_{n+1}\to I_n$ collapse the final edge to the vertex $n$. Hence $I$ is connected, but no edge is incident to $\infty$. 
\end{example}
\begin{example}[A connected graph whose proper connected subgraphs are finite]
Consider the Cayley graphs $C_n={\rm Cay}(\Z/n, 1)$ with the natural maps $C_{mn}\to C_n$   coming from the natural group homomorphisms. Then $\Gamma=\varprojlim C_n$ is the Cayley graph of $\hat \Z$ with respect to the generating set $1$ (see below). If $\Delta$ is a connected proper subgraph of $\Gamma$, then the image $\Delta_n$ of $\Delta$ in some $C_n$ is not all of $C_n$; hence it is a line segment in $C_n$. In each $C_{mn}$, the preimage of this line segment is $m$ disjoint copies of $\Delta_n$; by connectedness $\Delta_{mn}$ is precisely one of these. That is, $\Delta_{mn}\to\Delta_n$ is an isomorphism for all $m$. Thus $\Delta =\varprojlim \Delta_{mn}$ is just a line segment of finite length. 
\end{example}
\begin{example}[Cayley graph]
Let $G$ be a profinite group and $X$ be a closed subset of $G$, possibly containing 1. The {\em Cayley graph} of $G$ with respect to $X$ is the profinite graph $\Gamma=G\times(X\cup\{1\})$, where $V(\Gamma)=G\times\{1\}$, $d_0(g,x)=(g,1)$ and $d_1(g,x)=(gx,1)$. 

The Cayley graph $\Gamma$ is the inverse limit of the Cayley graphs of finite quotients of $\Gamma$, with respect to the images of the set $X$ in these quotients. The set $X$ generates $G$ topologically if and only if its image in each finite quotient is a generating set; hence $\Gamma$ is connected if and only if every finite Cayley graph is connected, if and only if $X$ generates $G$.
\end{example}
\subsection{Profinite trees}
As one might expect from the reduced importance of paths in the theory of profinite graphs, a `no cycles' condition does not give a good definition of `tree'. Instead a homological definition is used. Let $\mathbb{F}_p$ denote the finite field with $p$ elements for $p$ a prime. The following definitions and statements can be found in Section 2 of \cite{RZ00p}. For more on profinite modules and chain complexes, see Chapters 5 and 6 of \cite{RZ00}.
\begin{defn}
 Given a profinite space $X=\varprojlim X_i$ where the $X_i$ are finite spaces, define the {\em free profinite $\mathbb{F}_p$-module on $X$} to be
 \[ [[\mathbb{F}_p X]] =\varprojlim [\mathbb{F}_p X_i] \]
the inverse limit of the free $\mathbb{F}_p$-modules with basis $X_i$. Similarly for a pointed profinite space $(X,\ast)=\varprojlim (X_i,\ast)$ define
\[ [[\mathbb{F}_p (X,\ast)]] =\varprojlim [\mathbb{F}_p (X_i,\ast)] \]
These modules satisfy the expected universal property, that a map from $X$ to a profinite $\mathbb{F}_p$-module $M$ (respectively, a map from $(X,\ast)$ to $M$ sending $\ast$ to 0) extends uniquely to a continuous morphism of modules from the free module to $M$.
\end{defn} 
\begin{defn}
Let $\Gamma$ be a profinite graph. Let $(E^\ast(\Gamma),\ast)$ be the pointed profinite space $\Gamma/V(\Gamma)$ with distinguished point the image of $V(\Gamma)$. Consider the chain complex
\[ 0\to [[\mathbb{F}_p(E^\ast(\Gamma),\ast)]]\stackrel{\delta}{\to}[[\mathbb{F}_p V(\Gamma)]]\stackrel{\epsilon}{\to}\mathbb{F}_p\to 0\]
where the map $\epsilon$ is the evaluation map and $\delta$ sends the image of an edge $e$ in $(E^\ast(\Gamma),\ast)$ to $d_1(e)-d_0(e)$. Then define $H_1(\Gamma,\mathbb{F}_p)={\rm ker}(\delta)$ and $H_0(\Gamma, \mathbb{F}_p) = {\rm ker}(\epsilon)/{\rm im}(\delta)$.
\end{defn}
\begin{prop}\label{homologyfunct}
Let $\Gamma$ be a profinite graph. 
\begin{itemize}
\item $\Gamma$ is connected if and only if $H_0(\Gamma, \mathbb{F}_p)=0$
\item The homology groups $H_0, H_1$ are functorial, and if $\Gamma=\varprojlim \Gamma_i$, then 
\[H_j(\Gamma,\mathbb{F}_p)= \varprojlim H_j(\Gamma_i,\mathbb{F}_p) \]
\end{itemize}
\end{prop}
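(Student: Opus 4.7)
The plan is to treat the three assertions in reverse order of logical dependence: functoriality first (it is essentially immediate), then the inverse limit formula (the technical core), and finally the characterisation of connectedness (which reduces to a calculation for finite graphs once the limit formula is in hand).

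For functoriality, given a morphism $f\colon\Gamma\to\Delta$ of profinite graphs, I would note that $f$ induces a continuous map $V(\Gamma)\to V(\Delta)$ and a continuous pointed map $(E^\ast(\Gamma),\ast)\to(E^\ast(\Delta),\ast)$: indeed, although $f$ may send some edges to vertices, these edges are precisely the ones that become the basepoint after quotienting out $V(\Gamma)$. The universal property of the free profinite $\mathbb{F}_p$-module then yields module maps intertwining $\delta$ and $\epsilon$, so $H_0$ and $H_1$ are functorial.

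For the inverse limit formula, the key point is that the free profinite $\mathbb{F}_p$-module functor on (pointed) profinite spaces commutes with inverse limits, which is immediate from its definition as an inverse limit together with the universal property. Writing $\Gamma=\varprojlim\Gamma_i$ with $\Gamma_i$ finite quotients and surjective transition maps, I would verify that $V(\Gamma)=\varprojlim V(\Gamma_i)$ and $(E^\ast(\Gamma),\ast)=\varprojlim(E^\ast(\Gamma_i),\ast)$; the latter follows because the quotient of an inverse limit by the inverse limit of closed subspaces is the inverse limit of the quotients in the category of pointed profinite spaces. Consequently the whole chain complex defining $H_\ast(\Gamma,\mathbb{F}_p)$ is the inverse limit of the chain complexes defining $H_\ast(\Gamma_i,\mathbb{F}_p)$. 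Because these are inverse systems of finite $\mathbb{F}_p$-modules with surjective transition maps, the Mittag-Leffler condition holds and the inverse limit functor is exact; hence it commutes with the formation of kernels, images and quotients, yielding $H_j(\Gamma,\mathbb{F}_p)=\varprojlim H_j(\Gamma_i,\mathbb{F}_p)$ for $j=0,1$.

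For the connectedness criterion, I would first treat a finite graph $X$ directly: the image of $\delta$ is generated by the differences $d_1(e)-d_0(e)$, so $[\mathbb{F}_p V(X)]/\operatorname{im}(\delta)$ is the free $\mathbb{F}_p$-module on the set of path-components of $X$, and intersecting with $\ker(\epsilon)$ drops the dimension by one. Therefore $H_0(X,\mathbb{F}_p)=0$ if and only if $X$ is path-connected. For the profinite case, the surjectivity of the transition maps $\Gamma_j\to\Gamma_i$ implies surjectivity of the induced maps $H_0(\Gamma_j,\mathbb{F}_p)\to H_0(\Gamma_i,\mathbb{F}_p)$, so the inverse limit surjects onto each term; combined with the limit formula just established, $H_0(\Gamma,\mathbb{F}_p)=0$ if and only if every $H_0(\Gamma_i,\mathbb{F}_p)=0$, if and only if every $\Gamma_i$ is path-connected, which is the definition of connectedness for $\Gamma$.

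The only real obstacle is bookkeeping: one must be careful that the quotient $\Gamma/V(\Gamma)$ really is realised as the inverse limit of the $\Gamma_i/V(\Gamma_i)$ (a point that uses compactness of $V(\Gamma_i)$), and that exactness of inverse limits is applied in the category of profinite modules rather than abstract modules. Both are standard and are spelled out in the references already cited.
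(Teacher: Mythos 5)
Your argument is correct, and it is essentially the standard one: the paper gives no proof of this proposition, quoting it from Section 2 of the Ribes--Zalesskii pro-$p$ trees reference, where precisely this chain-level argument appears (identify the complex of $\Gamma$ with the inverse limit of the complexes of the $\Gamma_i$, use exactness of $\varprojlim$ on profinite modules, and reduce the $H_0$ criterion to the finite-graph computation via surjectivity of the quotient system). One cosmetic point: over an arbitrary directed index set the justification for exactness is compactness of the finite modules (exactness of $\varprojlim$ in the category of profinite modules), not Mittag--Leffler, which is a statement about countable systems --- as you yourself acknowledge in your closing remark.
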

\begin{defn}
A profinite graph $\Gamma$ is a {\em pro-$p$ tree}, or simply $p$-tree, if $\Gamma$ is connected and $H_1(\Gamma, \mathbb{F}_p)=0$. If $\pi$ is a non-empty set of primes then $\Gamma$ is a {\em $\pi$-tree} if it is a $p$-tree for every $p\in\pi$. We say simply `profinite tree' if $\pi$ consists of all primes.
\end{defn}
Note that a finite graph is a $p$-tree if and only if it is an abstract tree. It follows immediately from Proposition \ref{homologyfunct} that an inverse limit of finite trees is a $p$-tree. However this is not the only source of $p$-trees, and the notion of $p$-tree is not independent of the prime $p$.
\begin{example}
Let $\Gamma$ be the Cayley graph of $\hat\Z$ with respect to the generator 1, written as the inverse limit of cycles $C_n$ of length $n$. For a finite set $X$, 
\[ [[\mathbb{F}_p (X\cup\{\ast\},\ast)]]=[[\mathbb{F}_p X]]=[\mathbb{F}_p X] \]
so for each $n$, $H_1(C_n,\mathbb{F}_p)=\mathbb{F}_p$ as this is now the simplicial homology of the realisation of $C_n$ as a topological space. Thus the map $H_1(C_{mn},\mathbb{F}_p)\to H_1(C_n,\mathbb{F}_p)$ is just multiplication by $m$. In particular, $C_{pn}\to C_n$ induces the zero map on homology. It follows that the inverse limit of these groups is trivial, i.\ e.
\[ H_1(\Gamma,\mathbb{F}_p) = \varprojlim H_1(C_n,\mathbb{F}_p) =0\]
so that $\Gamma$ is a $p$-tree.

Now let $\Gamma_q$ be the Cayley graph of the $q$-adic integers \Z[q] with respect to 1, where $q$ is another prime. Again $\Gamma_q$ is the inverse limit of the cycles $C_{q^i}$ of length $q^i$, and the maps induced on homology by $C_{q^j}\to C_{q^i}$ for $j> i$ are multiplication by $q^{j-i}$. If $q=p$ these are the zero map, so again  
\[ H_1(\Gamma_q,\mathbb{F}_p) = \varprojlim H_1(C_{q^i},\mathbb{F}_p) =0\]
so that $\Gamma_q$ is a $q$-tree. On the other hand, if $q\neq p$, multiplication by $q^{j-i}$ is an isomorphism from $\mathbb{F}_p$ to itself, so that
\[ H_1(\Gamma_q,\mathbb{F}_p) = \varprojlim H_1(C_{q^i},\mathbb{F}_p) \iso\mathbb{F}_p\]
and $\Gamma_q$ is {\em not} a $p$-tree for $p\neq q$.  
\end{example}

It transpires (Theorem 1.4.3 of \cite{RZup}) that the Cayley graph of any free pro-$\pi$ group with respect to a free basis (that is, a pro-$\pi$ group satisfying the appropriate universal property in the category of pro-$\pi$ groups) is a $\pi$-tree. Note that the examples above show that a profinite group acting freely on a profinite tree need not be free profinite. For the subgroup $\Z[p]\leq\hat\Z$ acts freely on the above Cayley graph, but is not a free object in the category of all profinite groups (for instance, if $q$ is another prime, the map $1\mapsto 1\in \Z/q$ does not extend to a map $\Z[p]\to\Z/q$). It turns out (Theorem 3.1.2 of \cite{RZup}) that such a group will instead be {\em projective} in the sense of category theory. A pro-$p$ group acting freely on a pro-$p$ tree is a free pro-$p$ group however (Theorem 3.4 of \cite{RZ00p}), as in the category of pro-$p$ groups, any projective group is free pro-$p$. This is one of the ways in which the pro-$p$ theory is more amenable than the general theory.

Some of the topological properties of abstract trees do carry over well to the world of profinite trees:
\begin{prop}[Propositions 1.5.2, 1.5.6 of \cite{RZup}]
Let $T$ be a $\pi$-tree.
\begin{itemize}
\item Every connected profinite subgraph of $T$ is a $\pi$-tree
\item Any intersection of $\pi$-subtrees of $T$ is a (possibly empty) $\pi$-subtree
\end{itemize}
\end{prop}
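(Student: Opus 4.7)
The plan is to separate the two bullets and prove the first directly from the kernel description of $H_1$, then leverage it to reduce the second bullet to a connectedness statement that can be handled by approximation.

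For the first bullet, let $\iota\colon \Delta\hookrightarrow T$ be the inclusion of a connected profinite subgraph. I would show that $\iota$ induces an injective map of chain complexes computing $H_*(-,\mathbb{F}_p)$. Write $T=\varprojlim T_j$ and let $\Delta^{(j)}$ denote the image of $\Delta$ in $T_j$, so $\Delta=\varprojlim\Delta^{(j)}$. For each finite graph morphism $\Delta^{(j)}\hookrightarrow T_j$ the associated maps $[\mathbb{F}_p V(\Delta^{(j)})]\hookrightarrow[\mathbb{F}_p V(T_j)]$ and $[\mathbb{F}_p(E^\ast(\Delta^{(j)}),\ast)]\hookrightarrow[\mathbb{F}_p(E^\ast(T_j),\ast)]$ are inclusions of direct summands, hence injective. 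Taking inverse limits preserves injections of profinite $\mathbb{F}_p$-modules, so $\iota$ induces an injective map of chain complexes, and consequently an injection on $H_1(\cdot,\mathbb{F}_p)$. Since $H_1(T,\mathbb{F}_p)=0$ and $\Delta$ is by hypothesis connected, $\Delta$ is a $p$-tree for each $p\in\pi$, hence a $\pi$-tree.

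For the second bullet, let $\{\Delta_i\}$ be a family of $\pi$-subtrees of $T$ with intersection $\Delta=\bigcap_i\Delta_i$. The set $\Delta$ is closed in $T$ and, by an easy check on the retractions $d_0,d_1$, is a profinite subgraph of $T$. If $\Delta$ is empty we are done. Otherwise, since $\Delta\subseteq\Delta_{i_0}$ for any fixed $i_0$, the first bullet applied to the $\pi$-tree $\Delta_{i_0}$ reduces the claim to showing that $\Delta$ is connected.

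The main obstacle, and the only non-formal step, is this connectedness. I would handle it by approximating $T$ by finite trees: invoking the standard fact (available in \cite{RZup}) that a $\pi$-tree is an inverse limit of finite trees, write $T=\varprojlim T_j$ with each $T_j$ a finite tree, and let $D_i^{(j)}$ be the image of $\Delta_i$ in $T_j$. Since $\Delta_i$ is connected and $D_i^{(j)}$ is a finite quotient of it, $D_i^{(j)}$ is path-connected, hence a subtree of the finite tree $T_j$. Intersections of subtrees of a finite tree are subtrees (or empty), so each $\bigcap_i D_i^{(j)}$ is a (possibly empty) finite subtree of $T_j$, and these assemble into an inverse system. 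A routine identification of closed subspaces of a profinite space with inverse limits of their projections gives
\[\Delta\;=\;\bigcap_i\varprojlim_j D_i^{(j)}\;=\;\varprojlim_j\bigcap_i D_i^{(j)}.\]
Non-emptiness of $\Delta$ forces $\bigcap_i D_i^{(j)}$ to be non-empty, and hence a non-empty finite tree, for every $j$. Therefore $\Delta$ is an inverse limit of path-connected finite graphs, which is the definition of a connected profinite graph. Combining this with the first bullet applied inside $\Delta_{i_0}$ completes the proof.
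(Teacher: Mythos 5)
Your treatment of the first bullet is essentially correct and is the standard argument: a profinite subgraph $\Delta$ is closed, so $\Delta=\varprojlim\Delta^{(j)}$ with $\Delta^{(j)}$ the images in the finite quotients $T_j$, the chain complex of $\Delta$ is the inverse limit of those of the $\Delta^{(j)}$, each of which includes as a direct summand into the chain complex of $T_j$, and left-exactness of inverse limits gives an injection of chain complexes, whence $H_1(\Delta,\mathbb{F}_p)\hookrightarrow H_1(T,\mathbb{F}_p)=0$ for each $p\in\pi$; combined with the hypothesis that $\Delta$ is connected, this is precisely the definition of a $\pi$-tree. (You should, strictly, check that $V(\Delta)=\varprojlim V(\Delta^{(j)})$ and $E^\ast(\Delta)=\varprojlim E^\ast(\Delta^{(j)})$, using that every edge eventually maps to an edge, but this is routine.)

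The second bullet, however, rests on a false premise: it is not true that a $\pi$-tree can be written as an inverse limit of finite trees, and the paper's own examples refute it. By Proposition \ref{homologyfunct} an inverse limit of finite trees is a $p$-tree for \emph{every} prime $p$, yet the Cayley graph of $\mathbb{Z}_q$ with respect to $1$ is a $q$-tree which is not a $p$-tree for $p\neq q$; so for $\pi=\{q\}$ (a case covered by the statement) no presentation $T=\varprojlim T_j$ with $T_j$ finite trees exists. The claim fails even for the set of all primes: ${\rm Cay}(\hat{\mathbb{Z}},1)=\varprojlim C_n$ is a profinite tree, but if it were an inverse limit of finite trees, cofinality of finite quotients would force some standard projection $C_m\to C_n$ (which collapses no edges) to factor through a finite tree by edge-preserving morphisms, producing a positive-length closed walk in an oriented finite tree traversing each step along an edge in its given orientation, which is impossible. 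Once you only know $T=\varprojlim T_j$ with $T_j$ finite \emph{connected} graphs, your argument breaks down at exactly the step that matters: the images $D_i^{(j)}$ are merely path-connected subgraphs of $T_j$, and an intersection of connected subgraphs of a finite graph (two arcs of a cycle, say) need not be connected, so you cannot conclude that $\Delta$ is an inverse limit of path-connected finite graphs. The correct route is homological rather than by approximation: for two $\pi$-subtrees $\Delta_1,\Delta_2$ with nonempty intersection, $\Delta_1\cup\Delta_2$ is a connected subgraph of $T$, hence a $\pi$-tree by the first bullet, and the exact sequence of chain complexes $0\to C(\Delta_1\cap\Delta_2)\to C(\Delta_1)\oplus C(\Delta_2)\to C(\Delta_1\cup\Delta_2)\to 0$ yields $H_0(\Delta_1\cap\Delta_2,\mathbb{F}_p)=0$, i.e.\ the intersection is connected, whence a $\pi$-tree by the first bullet; arbitrary families are then handled by a compactness/inverse-limit argument over finite subfamilies, as in \cite{RZup}.
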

It follows that for every subset $W$ of a $\pi$-tree $T$ there is a unique smallest subtree of $T$ containing $W$. If $W$ consists of two vertices $v,w$ then this smallest subtree is called the {\em geodesic} from $v$ to $w$ and is denoted $[v,w]$. Note that if a profinite tree $T$ is path-connected, hence is also an abstract tree, then $[v,w]$ will coincide with the usual notion of geodesic, a shortest path from $v$ to $w$. However there is no requirement for a geodesic to be a path. For instance, our above analysis of the connected subgraphs of the profinite tree ${\rm Cay}(\hat\Z,1)$ shows that if $\lambda,\mu\in\hat\Z$ then either $\lambda-\mu\in\Z$ or the geodesic $[\lambda,\mu]$ is the entire tree ${\rm Cay}(\hat\Z,1)$.

\subsection{Group actions on profinite trees}
The theory of profinite groups acting on profinite trees is less tractable than the classical theory, but still parallels it in many respects. In this section we will recall results from the unpublished book \cite{RZup}, and prove others which will be of use. The theory was originally developed in \cite{GR78}, \cite{Zal89}, \cite{ZM89a}, and \cite{ZM89b}, and the pro-$p$ version of the theory may be found in published form in \cite{RZ00p}.
\begin{defn}
A profinite group $G$ is said to {\em act} on a profinite graph $\Gamma$ if $G$ acts continuously on the profinite space $\Gamma$ in such a way that 
\[g\cdot d_i(x) = d_i(g\cdot x)\]
for all $g\in G, x\in \Gamma, i=0,1$. For each $x\in\Gamma$, the stabiliser $\{g\in G\,|\, g\cdot x=x\}$ will be denoted $G_x$. For subsets $X\subseteq \Gamma$, $H\subseteq G$, the set of points in $X$ fixed by every element of $H$ will be denoted $X^H$.
\end{defn}
Note that given an edge $e$, we cannot have $g\cdot e =e$ without fixing both endpoints of $e$, as $g\cdot d_i(e) = d_i(e)$ for each $i$. In particular, the qualification `without inversion' applied to group actions in the classical theory of \cite{Serre} is here subsumed in the definition. 

If $G$ acts on $\Gamma$, the quotient space $G\backslash\Gamma$ is a well-defined profinite graph. As one might expect, such an action may be represented as an `inverse limit of finite group actions on finite graphs'. More precisely, 
\begin{defn}
Let a profinite group $G$  act on a profinite graph $\Gamma$. A decomposition of $\Gamma$ as an inverse limit of finite graphs $\Gamma = \varprojlim X_i$ is said to be a {\em $G$-decomposition} if $G$ acts on each $X_i$ in such a way that all quotient maps $\Gamma\to X_i$ and transition maps $X_j\to X_i$ are $G$-equivariant.
\end{defn}

Indeed, the following is true
\begin{lem}[Lemma 1.2.1 of \cite{RZup}]\label{Gdecomp}
Let a profinite group $G$ act on a profinite graph $\Gamma$. Then the graphs $\{N\backslash \Gamma\,|\, N\nsgp[o] G\}$ form an inverse system and
\[\Gamma = \varprojlim_{N\nsgp[o] G} N\backslash \Gamma\]
\end{lem}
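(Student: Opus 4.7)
The plan is to exhibit a canonical map $\phi\colon\Gamma\to T:=\varprojlim_{N\nsgp[o] G} N\backslash\Gamma$ and show that it is a continuous bijective graph morphism. Since $\Gamma$ is compact and $T$ is Hausdorff (as an inverse limit of Hausdorff spaces), bijectivity plus continuity will automatically upgrade $\phi$ to an isomorphism of profinite graphs.

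First I would verify the inverse system. For open normal $N_1\leq N_2$, every $N_1$-orbit lies in a unique $N_2$-orbit, so the identity of $\Gamma$ descends to a continuous map $p_{N_1,N_2}\colon N_1\backslash\Gamma\to N_2\backslash\Gamma$; these compose correctly and commute with $d_0,d_1$ because the $G$-action does. Each quotient $N\backslash\Gamma$ is itself a profinite graph: the orbit equivalence relation on $\Gamma$ is the image of the continuous map $N\times\Gamma\to\Gamma\times\Gamma$, $(n,x)\mapsto(x,nx)$, hence closed in $\Gamma\times\Gamma$, which makes the quotient Hausdorff; the vertex subspace is the image of the closed set $V(\Gamma)$, and the boundary retractions descend continuously. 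The map $\phi$, defined coordinatewise by $x\mapsto(Nx)_N$, is then a continuous graph morphism.

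The substantive content is bijectivity, both halves via compactness. For injectivity, suppose $\phi(x)=\phi(y)$, so that $y\in Nx$ for every $N\nsgp[o] G$. The sets
\[ A_N=\{g\in N : gx=y\} \]
are non-empty closed subsets of the compact group $G$ (closed as intersections of $N$ with the preimage of the closed point $y$ under the continuous orbit map $g\mapsto gx$), and $N_1\leq N_2$ implies $A_{N_1}\subseteq A_{N_2}$, so $\{A_N\}$ is a filter base. Compactness then produces some $g\in\bigcap_N A_N\subseteq\bigcap_{N\nsgp[o] G}N=\{1\}$, and $g=1$ forces $y=x$. For surjectivity, a coherent family $(\bar x_N)\in T$ is identified with a nested filter base of closed non-empty orbits in $\Gamma$ (coherence gives $\bar x_{N_1}\subseteq\bar x_{N_2}$ whenever $N_1\leq N_2$); any point $x$ in the non-empty intersection satisfies $Nx=\bar x_N$ for every $N$, so $\phi(x)=(\bar x_N)_N$.

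The main point requiring genuine care is the verification that orbits and orbit relations under closed subgroups of $G$ are closed in $\Gamma$ and $\Gamma\times\Gamma$ respectively; without this, neither the Hausdorffness of the quotients $N\backslash\Gamma$ nor the closedness needed in the filter-base arguments can be asserted. Everything else is formal bookkeeping about the quotient topology and a double invocation of the finite intersection property in $G$ and $\Gamma$.
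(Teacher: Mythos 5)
Your proposal is correct and follows essentially the same route as the paper: the injectivity step is the identical compactness/finite-intersection argument with the closed sets $\{g\in N : g\cdot v=w\}$, and your explicit filter-base argument for surjectivity just spells out what the paper subsumes in its appeal to the universal property of inverse limits (together with compactness of $\Gamma$). The extra verifications you include (the inverse system, closedness of the orbit relation, Hausdorffness of the quotients) are details the paper dismisses as clear, so there is no substantive difference.
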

\begin{proof}
The first statement is clear. By the universal property of inverse limits, we have a natural continuous surjection
\[\Gamma\twoheadrightarrow \varprojlim_{N\nsgp[o] G} N\backslash \Gamma\]
and it remains to show that this is injective. If $v,w\in \Gamma$ are identified in every $N\backslash \Gamma$, then for all $N$ there is some $n\in N$ such that $n\cdot v =w$. Thus the closed subsets 
\[\{n\in N \,|\, n\cdot v = w\}\]
of $G$ are all non-empty, and the collection $\{N\}$ is closed under finite intersections; so by compactness of $G$, their intersection is non-empty, so there is some $g\in\bigcap N$ such that $g\cdot v=w$; but the intersection of all $N$ is trivial, so $v=w$.
\end{proof}

\begin{defn}
If $G$ is a profinite group acting on a profinite tree $T$, the action is said to be:
\begin{itemize}
\item {\em faithful}, if the only element of $G$ fixing every vertex of $T$ is the identity;
\item {\em irreducible}, if no proper subtree of $T$ is invariant under the action of $G$;
\item {\em free} if $G_x=1$ for all $x\in \Gamma$.
\end{itemize}
\end{defn}
Note that a group $G$ acts freely on its Cayley graph with respect to any closed subset $X$, with quotient the `bouquet of circles' on the pointed profinite space $(X\cup\{1\},1)$, i.\ e.\ the profinite graph $\Gamma(X\cup\{1\},1)$ with vertex space $\{1\}$.

Faithful and irreducible actions are the most important actions; indeed, given a group $G$ acting on a profinite tree $\Gamma$, we can quotient $G$ by the kernel $\bigcap G_x$ of the action (i.\ e. those group elements fixing all of $\Gamma$) and then pass to a minimal $G$-invariant subtree to get a faithful irreducible action. Such a subtree exists by:
\begin{prop}[Proposition 1.5.9 of \cite{RZup}; Lemma 3.11 of \cite{RZ00p}]
Let a profinite group $G$ act on a pro-$\pi$ tree $\Gamma$. Then there exists a minimal $G$-invariant subtree $\Delta$ of $\Gamma$, and if $|\Delta|>1$, it is unique.   
\end{prop}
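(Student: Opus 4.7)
For existence, the plan is to apply Zorn's lemma to the poset $\mathcal{S}$ of non-empty $G$-invariant $\pi$-subtrees of $\Gamma$, ordered by inclusion. This poset is non-empty, since it contains $\Gamma$ itself. Given a totally ordered chain $\{\Delta_i\}_{i\in I}$ in $\mathcal{S}$, I would consider $\Delta_\infty=\bigcap_i \Delta_i$. It is $G$-invariant as an intersection of $G$-invariant sets; it is a $\pi$-subtree by the already-cited fact that intersections of $\pi$-subtrees are $\pi$-subtrees; and it is non-empty by compactness of $\Gamma$, since the $\Delta_i$ are closed in $\Gamma$ and form a descending family of non-empty sets, hence have the finite intersection property. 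Thus $\Delta_\infty$ is a lower bound in $\mathcal{S}$, and Zorn delivers a minimal $\Delta\in\mathcal{S}$.

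For uniqueness under $|\Delta|>1$, suppose $\Delta_1,\Delta_2$ are two minimal non-empty $G$-invariant subtrees. Their intersection is again a $G$-invariant $\pi$-subtree. If it is non-empty then by minimality it equals both $\Delta_1$ and $\Delta_2$, and we are done. So assume for contradiction that $\Delta_1\cap\Delta_2=\emptyset$; I would aim to produce a $G$-fixed vertex $p\in\Delta_1$, so that $\{p\}$ is a strictly smaller non-empty $G$-invariant subtree of $\Delta_1$, violating minimality (since $|\Delta_1|>1$).

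The $G$-fixed vertex is constructed via the standard bridge/closest-point argument, adapted to the profinite setting. For each $w\in\Delta_2$, define $\pi_1(w)$ as follows: for any $v\in\Delta_1$, the geodesic $[w,v]$ meets the subtree $\Delta_1$ in a non-empty subtree $[w,v]\cap\Delta_1$, and one shows (using that an intersection of subtrees is a subtree and that two geodesics $[w,v],[w,v']$ from $w$ into $\Delta_1$ share a unique entry point) that this entry point is independent of $v$; call it $\pi_1(w)$. Thus $\pi_1\colon\Delta_2\to\Delta_1$ is well-defined and manifestly $G$-equivariant. The key claim is that $\pi_1$ is constant: for $w,w'\in\Delta_2$ the geodesic $[w,w']$ lies in $\Delta_2$, hence is disjoint from $\Delta_1$, and the subtree spanned by $\{w,w',\pi_1(w),\pi_1(w')\}$ forces (via uniqueness of smallest containing subtrees) $\pi_1(w)=\pi_1(w')$. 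The common image $p$ is a single vertex in $\Delta_1$, and $G$-equivariance plus $G$-invariance of $\Delta_2$ shows $p$ is fixed by $G$, giving the desired contradiction.

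The main obstacle is making the bridge argument rigorous in the profinite setting, where geodesics are not paths and the usual appeal to ``unique reduced path in a tree'' is unavailable. Everything has to be phrased in terms of the closure/intersection properties of $\pi$-subtrees and the uniqueness of the smallest $\pi$-subtree containing a set, which were both recorded in the preliminary material; the well-definedness and $G$-equivariance of the projection $\pi_1$ is where most of the care is needed, and is where I would expect to lean most heavily on the $\pi$-tree axioms.
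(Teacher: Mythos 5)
Your existence argument is fine: non-empty $G$-invariant $\pi$-subtrees are closed subsets of the compact space $\Gamma$, the intersection of a descending chain of them is non-empty by the finite intersection property, is $G$-invariant, and is again a $\pi$-subtree by the quoted intersection property, so Zorn's lemma (applied to reverse inclusion) produces a minimal one. This is the standard argument. Note that the paper itself offers no proof of this Proposition --- it is quoted from Ribes--Zalesskii --- so the only question is whether your argument is complete on its own terms.

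The uniqueness half has a genuine gap at exactly the point you flag. Your construction of the projection $\pi_1\colon\Delta_2\to\Delta_1$ rests on the claim that for $w\notin\Delta_1$ the geodesics $[w,v]$, $v\in\Delta_1$, share a unique ``entry point'' into $\Delta_1$; such a closest-point projection simply does not exist in a general profinite tree, and the failure is visible in an example already recorded in the paper. In $T={\rm Cay}(\hat\Z,1)$ let $\Delta_1$ be the segment consisting of the vertices $0,1$ and the edge between them, and let $w=\lambda\in\hat\Z\smallsetminus\Z$. Then $[w,0]$ and $[w,1]$ are both the whole of $T$, so $[w,v]\cap\Delta_1=\Delta_1$ for $v=0,1$, and there is no vertex $u\in\Delta_1$ with $[w,u]\cap\Delta_1=\{u\}$: there is no entry point at all. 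Hence the well-definedness of $\pi_1$ (and likewise the step in which the subtree spanned by $w,w',\pi_1(w),\pi_1(w')$ is supposed to force $\pi_1(w)=\pi_1(w')$) is not a routine adaptation of the discrete bridge argument; it is precisely the content that would have to be established, presumably exploiting the minimality and $G$-invariance of $\Delta_1,\Delta_2$, neither of which your sketch uses at that point. As written, the key reduction ``disjoint minimal invariant subtrees imply a $G$-fixed vertex'' is asserted rather than proved, so the uniqueness statement remains open in your write-up; this is where the cited sources do real work that cannot be transferred by analogy with abstract trees.
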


\begin{theorem}[Theorem 3.1.5 of \cite{RZup}]\label{fixedptsubtree}
Suppose a pro-$\pi$ group $G$ acts on a $\pi$-tree $T$. Then the set $T^G$ of fixed points under the action of $G$ is either empty or a $\pi$-subtree of $G$.
\end{theorem}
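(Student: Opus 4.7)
The fixed set decomposes as
\[ T^G = \bigcap_{g \in G} T^g, \]
and continuity of the action gives $T^g = T^{\overline{\langle g \rangle}}$ for each $g$, so in fact
\[ T^G = \bigcap_{g \in G} T^{\overline{\langle g \rangle}}. \]
Since an arbitrary intersection of $\pi$-subtrees of $T$ is again a (possibly empty) $\pi$-subtree (recalled earlier from Propositions 1.5.2 and 1.5.6 of \cite{RZup}), it is enough to prove the result under the additional assumption that $G$ itself is a pro-$\pi$ procyclic group.

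Assume henceforth $G$ is procyclic pro-$\pi$ and $T^G \neq \emptyset$. As an intersection of closed fixed sets of individual elements, $T^G$ is a closed subgraph of $T$, and by the same proposition any connected closed subgraph of a $\pi$-tree is already a $\pi$-subtree; so the only remaining task is to establish that $T^G$ is connected. Pick $v, w \in T^G$ and form the geodesic $[v,w]$, the unique smallest $\pi$-subtree of $T$ containing $\{v,w\}$. Since $gv = v$ and $gw = w$ for every $g\in G$, the translate $g\cdot [v,w]$ is also a $\pi$-subtree containing $\{v,w\}$, and uniqueness forces $g\cdot [v,w]=[v,w]$; so $[v,w]$ is $G$-invariant.

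What remains, and what will occupy the bulk of the proof, is to show that the action of $G$ on $[v,w]$ is actually trivial; this will place $[v,w]\subseteq T^G$, so that any two fixed vertices of $T$ sit inside a common $\pi$-subtree of $T^G$, giving connectedness. My plan is to apply Lemma \ref{Gdecomp} and write
\[ [v,w] = \varprojlim_{N \nsgp[o] G} N\backslash [v,w], \]
in which each quotient $G/N$ is a finite $\pi$-group acting on the finite graph $N\backslash [v,w]$ while fixing the images of $v$ and $w$. One then wishes to deduce that each $G/N$ acts trivially and to lift back through the inverse system to conclude that $G$ acts trivially on $[v,w]$.

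The main obstacle is that the finite quotients $N\backslash [v,w]$ are not in general trees — only finite equivariant quotients of the $\pi$-tree $[v,w]$ — so the classical Bass--Serre argument (a group fixing two vertices of a tree must fix the unique path between them) does not apply directly. To overcome this I would argue by contradiction: a non-trivial action of some $g \in G$ on $[v,w]$ fixing both endpoints should, via the orbit of a moved vertex interacting with the uniqueness of geodesics, manufacture a non-trivial cycle in some quotient $N\backslash [v,w]$ which persists under the inverse system to give a non-zero class in $H_1([v,w], \mathbb{F}_p)$ for some $p \in \pi$, contradicting the fact that $[v,w]$ is a $\pi$-subtree. Formalising this homological obstruction — in particular, producing the cycle in a manner compatible with the inverse limit and with the precise notion of $\pi$-tree used here — is the principal technical challenge.
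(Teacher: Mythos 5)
The preliminary reductions in your proposal are fine: $T^G=\bigcap_{g\in G}T^{\overline{\langle g\rangle}}$, arbitrary intersections of $\pi$-subtrees are (possibly empty) $\pi$-subtrees, $T^G$ is a closed subgraph, the geodesic $[v,w]$ between two fixed vertices is $G$-invariant, and the theorem would follow once one knows $[v,w]\subseteq T^G$. But that last step is the entire content of the theorem, and you do not prove it: your final paragraph describes an intention, not an argument. (Note also that the paper itself gives no proof of this statement --- it is quoted from Theorem 3.1.5 of \cite{RZup} --- so there is nothing in the present text to fall back on beyond the general machinery recalled in Section 3, and your proof must stand on its own.)

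Moreover, the strategy you sketch for the missing step collides with a known and essential feature of profinite graphs. Writing $[v,w]=\varprojlim_{N\nsgp[o] G} N\backslash[v,w]$ via Lemma \ref{Gdecomp} and applying Proposition \ref{homologyfunct} gives $H_1([v,w],\mathbb{F}_p)=\varprojlim H_1(N\backslash[v,w],\mathbb{F}_p)$, so a non-trivial cycle in one finite quotient, or even in all of them, yields no contradiction: the transition maps may kill every such class. The paper's own example makes this vivid: ${\rm Cay}(\hat\Z,1)=\varprojlim C_n$ is a profinite tree although $H_1(C_n,\mathbb{F}_p)\neq 0$ for every $n$. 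Thus ``a cycle which persists under the inverse system'' is exactly what you would have to construct, and your outline offers no mechanism for producing a compatible system of non-zero classes from a single $g$ fixing $v,w$ but moving some point of $[v,w]$; the quotients $N\backslash[v,w]$ are not trees, so there is no finite-level analogue of the classical ``fixes two vertices, hence fixes the path between them'' argument to exploit. A correct proof must engage the $\pi$-tree structure of $[v,w]$ directly rather than through its finite quotients --- for example, since $H_1([v,w],\mathbb{F}_p)=0$ and $H_0([v,w],\mathbb{F}_p)=0$, the element $w-v$ has a unique preimage $c$ under the boundary map $\delta$ of the chain complex, and this chain $c$ is $G$-invariant; but passing from invariance of such chains to pointwise fixing of the geodesic is genuinely delicate, which is precisely why the result is imported from \cite{RZup} rather than reproved in the paper. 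As it stands, your proposal has a genuine gap at its central step.
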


\begin{theorem}[Theorem 3.1.7 of \cite{RZup}]\label{finitegroups}
Any finite group acting on a profinite tree fixes some vertex.
\end{theorem}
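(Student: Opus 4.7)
My plan is to argue by induction on $|G|$, concentrating the substantive work in the base case where $G$ is cyclic of prime order. The trivial group is immediate, so suppose $|G|>1$.

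For the base case, let $G=\gp{g}\iso\Z/p$ for a prime $p$, and suppose for contradiction that $T^G=\emptyset$. The action axiom $g\cdot d_i(e)=d_i(g\cdot e)$ forces the endpoints of any edge fixed by $g$ to themselves be fixed, so $g$ has no fixed edge either. Since every non-identity element of $\Z/p$ generates the whole group, it follows that $G$ acts freely on $T$. But $T$ is a profinite tree and hence in particular a pro-$p$ tree, so by Theorem~3.4 of \cite{RZ00p} the pro-$p$ group $G$ must be free pro-$p$. This contradicts the fact that $\Z/p$ has torsion, completing the base case.

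For the inductive step, suppose first that $G$ admits a proper normal subgroup $N\nsgp G$ with $G/N\iso\Z/p$. The inductive hypothesis gives $T^N\neq\emptyset$, and Theorem~\ref{fixedptsubtree} applied with $\pi$ equal to the set of all primes (using that the profinite tree $T$ is a $\pi$-tree for every non-empty $\pi$) shows that $T^N$ is itself a profinite subtree of $T$. Normality of $N$ in $G$ means that $G/N\iso\Z/p$ acts on $T^N$, and the base case then produces a $G/N$-fixed vertex in $T^N$, which is a $G$-fixed vertex of $T$. Iterating this reduction disposes of every solvable finite group, and in particular every $p$-group, since any non-trivial $p$-group has maximal subgroups of index $p$ which are automatically normal.

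The main obstacle is the case of a non-trivial perfect finite group $G=[G,G]$, which admits no proper normal subgroup with cyclic prime quotient and so escapes the reduction above. For such $G$, the plan would be to exploit that every Sylow subgroup $P\leq G$ has non-empty profinite fixed-subtree $T^P$ (by the $p$-group case of the inductive step applied to $P$), and then to show that the conjugate family $\{T^{P^g}:P\text{ a Sylow subgroup of }G,\,g\in G\}$ has a common point. This would rely on a Helly-type property for profinite subtrees of $T$ — namely, that any finite family of pairwise-intersecting profinite subtrees has a common point — together with compactness of $T$ and a careful handling of those pairs of Sylow subgroups that happen to generate $G$. Establishing this Helly-type property, and thereby eliminating the perfect case, is the substantive step required beyond the reductions above.
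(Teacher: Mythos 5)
This theorem is not proved in the paper at all: it is quoted verbatim as Theorem 3.1.7 of \cite{RZup}, so there is no in-paper argument to compare against and your proposal has to stand on its own. The parts you do carry out are correct. The base case is fine: if $\Z/p$ fixed no vertex it would fix no edge either (fixing an edge forces fixing its endpoints), every point stabiliser would be a proper subgroup of $\Z/p$ and hence trivial, the action would be free, and since a profinite tree is in particular a pro-$p$ tree, Theorem 3.4 of \cite{RZ00p} (equivalently, the projectivity statement of Theorem 3.1.2 of \cite{RZup}) gives the contradiction with torsion. The inductive step via a normal subgroup $N$ of prime index, Theorem \ref{fixedptsubtree} applied to $T^N$, and the quotient action of $G/N$ is also correct; in fact you can do better with no extra work: for \emph{any} proper non-trivial normal subgroup $N\nsgp G$, induction gives $T^N\neq\emptyset$, it is a profinite subtree, and induction applied again to the action of $G/N$ on $T^N$ finishes, so the reduction leaves only the finite \emph{simple} non-abelian groups, not all perfect groups.

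The genuine gap is that this remaining case is not proved, and the plan you sketch for it does not work as stated. If $P$ and $Q$ are Sylow subgroups with $\gp{P,Q}=G$, then a vertex in $T^P\cap T^Q$ is automatically fixed by all of $G$; so showing that this particular pairwise intersection is non-empty is not a lemma on the way to the theorem, it \emph{is} the theorem for $G$, and neither the induction hypothesis (which only covers proper subgroups) nor a Helly-type property can supply it, since Helly takes the non-emptiness of pairwise intersections as its hypothesis. Compactness is also not the issue: $G$ is finite, so only finitely many fixed subtrees occur. Thus the argument is circular exactly at the point where the real difficulty lies — which is why the simple (e.g.\ $A_5$) case is the substantive content of Theorem \ref{finitegroups}, and why the full proof in \cite{RZup} requires machinery beyond the reductions you give rather than a formal Sylow patching argument.
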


\begin{prop}\label{powersfix}
Let $G$ act on a profinite tree $T$, and let $x\in G$. If $x^\lambda$ fixes a vertex $v$ for some $\lambda\in \hat \Z\smallsetminus\{0\}$ which is not a zero-divisor, then $x$ also fixes some vertex.
\begin{proof}
Let $S$ be the subtree of $T$ fixed by $x^\lambda$; it is non-empty by assumption. Consider the action of $C=\overline{<\!x\!>}$ on $S$. The closed (normal) subgroup of $C$ generated by $x^\lambda$ acts trivially on $S$, so there is a quotient action of $C/\overline{<\!x^\lambda\!>}$ on $S$. Now this quotient group is $\hat\Z/\lambda\hat\Z$. Using the splitting $\hat\Z=\prod \Z[p]$, we find
\[\hat\Z/\lambda\hat\Z = \prod \Z[p]/\pi_p(\lambda)\Z[p]\]
where $\pi_p$ is the projection onto each factor. No $\pi_p(\lambda)$ is zero because $\lambda$ is not a zero-divisor in $\hat\Z$. Closed subgroups of \Z[p] are finite index or trivial (see Proposition 2.7.1 of \cite{RZ00}), so $\hat\Z/\lambda\hat\Z$ is a direct product of finite cyclic groups.

The subtree fixed by a direct product of groups is the intersection of the subtrees fixed by each group. Any finite product of finite cyclic groups fixes some vertex by Theorem \ref{finitegroups}, so the subtrees fixed by each finite cyclic group are a collection of non-empty closed subsets of $S$ with the finite intersection property. By compactness, the intersection of all of them is non-empty; but this is the subtree fixed by $C/\overline{<\!x^\lambda\!>}$, which is the same as the subtree fixed by $x$. So $x$ fixes some vertex of $S$ (hence of $T$).
\end{proof}
\end{prop}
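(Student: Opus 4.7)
The plan is to reduce the question to one about a smaller action and exploit the product decomposition $\hat{\Z} = \prod_p \Z[p]$ to reduce to the finite case handled by Theorem \ref{finitegroups}. First, I would let $S = T^{\overline{\langle x^\lambda\rangle}}$, the subtree fixed by the closed subgroup generated by $x^\lambda$. This is non-empty by hypothesis and is a profinite subtree of $T$ by Theorem \ref{fixedptsubtree}. The subgroup $\overline{\langle x^\lambda \rangle}$ is normal (indeed central) in $C := \overline{\langle x\rangle}$, so $C$ preserves $S$ and the action factors through the procyclic quotient $C/\overline{\langle x^\lambda\rangle}$. Since $C$ is a quotient of $\hat\Z$, this quotient is itself a quotient of $\hat\Z/\lambda\hat\Z$, so I only need to show that the group $\hat\Z/\lambda\hat\Z$ acting on $S$ has a fixed vertex.

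Next, I would use the splitting $\hat\Z = \prod_p \Z[p]$ to write
\[ \hat\Z/\lambda\hat\Z \;\iso\; \prod_p \Z[p]/\pi_p(\lambda)\Z[p], \]
where $\pi_p\colon\hat\Z\to\Z[p]$ is the projection. The non-zero-divisor hypothesis on $\lambda$ is exactly what ensures that each $\pi_p(\lambda)$ is non-zero in $\Z[p]$; and any non-zero closed subgroup of $\Z[p]$ is of finite index (this is the standard classification of closed subgroups of $\Z[p]$), so every factor of this product is a finite cyclic group.

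For each prime $p$, the finite group $\Z[p]/\pi_p(\lambda)\Z[p]$ fixes some vertex of $S$ by Theorem \ref{finitegroups}, so its fixed-point set $S_p \subseteq S$ is a non-empty closed subtree. To conclude, I would argue that any finite subcollection of the $S_p$ has non-empty intersection: the product of finitely many of these finite cyclic factors is still a finite group, so it has a fixed vertex by Theorem \ref{finitegroups} again, and this fixed vertex lies in the intersection of the corresponding $S_p$. Then by compactness of $S$ (it is profinite and the $S_p$ are closed), the finite intersection property forces $\bigcap_p S_p$ to be non-empty. Any vertex in this intersection is fixed by every factor, hence by the whole product $\hat\Z/\lambda\hat\Z$, hence by $x$ itself.

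I expect the main subtlety to be the step where one has to know that infinitely many closed fixed-subtrees have non-empty total intersection; this is where the passage through finite subproducts (rather than just invoking Theorem \ref{finitegroups} for the whole direct product, which is infinite and hence inadmissible) is essential, and it is where the non-zero-divisor hypothesis is genuinely used, via the finiteness of each factor $\Z[p]/\pi_p(\lambda)\Z[p]$. Everything else is a fairly formal manipulation of the centrality of $x^\lambda$ in $C$ and the functoriality of taking fixed subtrees.
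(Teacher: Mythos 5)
Your proof is correct and follows essentially the same route as the paper: pass to the fixed subtree of $x^\lambda$, quotient out to get an action of $\hat\Z/\lambda\hat\Z\iso\prod_p\Z[p]/\pi_p(\lambda)\Z[p]$ with each factor finite cyclic (this is where the non-zero-divisor hypothesis enters), and then combine Theorem \ref{finitegroups} on finite subproducts with compactness and the finite intersection property. Your extra care in treating $C=\overline{\langle x\rangle}$ as a quotient of $\hat\Z$ rather than assuming $C\iso\hat\Z$ is a harmless refinement of the same argument.
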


\begin{rmk}
The condition that $\lambda$ is not a zero-divisor is necessary. For instance, $\hat \Z$ (written multiplicatively with generator $x$) acts freely on its Cayley graph, but if $\lambda\mu=0$ for $\lambda, \mu$ non-zero, then $x^\mu$ fixes no vertex but $(x^\mu)^\lambda$ is the identity. Recall however that no element of $\Z$ is a zero divisor in $\hat\Z$, so that the Proposition applies in particular when $\lambda\in\Z$.
\end{rmk}

\begin{prop}[Proposition 3.2.3(b) of \cite{RZup}]\label{abelianactingontree}
Let $G$ be an abelian profinite group acting faithfully and irreducibly on a profinite tree. Then $G$ acts freely and $G\iso \Z[\pi]$ for some set of primes $\pi$.
\end{prop}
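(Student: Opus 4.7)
The plan is to leverage faithfulness and irreducibility to upgrade the action to a free one, then identify $G$ prime by prime via the pro-$p$ theory of free actions on trees.

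First I would establish freeness. Suppose some $g \in G$ stabilises a point of $T$; since edges cannot be inverted, $g$ fixes at least one vertex $v$. Let $H = \overline{\langle g \rangle}$, so $v \in T^H$. By Theorem \ref{fixedptsubtree} (applied with $\pi$ the set of all primes), $T^H$ is a non-empty subtree. Crucially, because $G$ is abelian, $T^H$ is $G$-invariant: for any $k \in G$ and $x \in T^H$ one has $h(kx) = k(hx) = kx$ for all $h \in H$, so $kx \in T^H$. Irreducibility forces $T^H = T$, so every element of $H$ fixes every vertex of $T$, and faithfulness gives $H = 1$, whence $g = 1$. Since edge stabilisers lie in vertex stabilisers, the action is free.

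Next I would identify the isomorphism type of $G$. Being abelian and profinite, $G$ splits canonically as a product $G \cong \prod_p G_{(p)}$ of its pro-$p$ Sylow subgroups. Each $G_{(p)}$ is closed in $G$ and so acts freely on $T$. Since $T$ is a profinite tree it is a $p$-tree for every prime $p$, so by Theorem 3.4 of \cite{RZ00p} each $G_{(p)}$ is free pro-$p$; being abelian, $G_{(p)}$ has rank at most one, hence is either trivial or isomorphic to $\Z[p]$. Setting $\pi = \{p : G_{(p)} \neq 1\}$ yields $G \cong \prod_{p \in \pi} \Z[p] = \Z[\pi]$.

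The main substantive input is the appeal to Theorem 3.4 of \cite{RZ00p}, the analogue of Nielsen--Schreier in the pro-$p$ category. The excerpt explicitly flags that the corresponding statement fails in the full profinite category (witness $\Z[p] \leq \hat\Z$ acting freely on its Cayley graph), so the passage through pro-$p$ Sylow subgroups is essential rather than cosmetic. Once that is granted, the remaining ingredients — that abelian profinite groups decompose into pro-$p$ parts and that abelian free pro-$p$ groups are $\Z[p]$ or trivial — are standard, and the abelian hypothesis does double duty, both in making the fixed-point subtree $G$-invariant and in furnishing the product decomposition.
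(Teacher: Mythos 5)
Your proof is correct. Note that the paper itself gives no argument for this statement: it is quoted directly as Proposition 3.2.3(b) of \cite{RZup}, so there is no internal proof to compare yours against. What you have done is reassemble it from ingredients the paper does quote, and each step checks out. For freeness: point stabilisers are closed, so the whole closure $H=\overline{\langle g\rangle}$ fixes the vertex $v$; Theorem \ref{fixedptsubtree} makes $T^H$ a nonempty subtree; commutativity of $G$ is exactly what makes $T^H$ a $G$-invariant subtree, so irreducibility gives $T^H=T$ and faithfulness gives $g=1$; and since an edge stabiliser fixes both endpoints, triviality of vertex stabilisers suffices. For the identification of $G$: the decomposition of an abelian (pronilpotent) profinite group as the direct product of its Sylow subgroups, the fact that a profinite tree is a $p$-tree for every $p$, and Theorem 3.4 of \cite{RZ00p} give that each $G_{(p)}$ is free pro-$p$, hence trivial or $\Z[p]$ by abelianness, so $G\iso\Z[\pi]$. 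Your remark that the passage through the pro-$p$ Sylow subgroups is essential is also well taken, since a free action on a profinite tree only forces projectivity in the full profinite category, as the paper's example of $\Z[p]\leq\hat\Z$ acting on ${\rm Cay}(\hat\Z,1)$ shows.
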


\section{Acylindrical actions}\label{secAcylindrical}
Actions on profinite trees are particularly malleable when the action is acylindrical.
\begin{defn}
Let a profinite group $G$ act on a profinite tree $T$. The action is {\em $k$-acylindrical} if the stabiliser of any injective path of length greater than $k$ is trivial.
\end{defn}
For instance an action with trivial edge stabilisers is 0-acylindrical. In \cite{WZ14} Wilton and Zalesskii exploited the fact that if edge groups are malnormal in the adjacent vertex groups then the action on the standard graph is 1-acylindrical. 

We now prove some results about acylindrical actions on profinite trees. The following lemma is taken from the Appendix to \cite{HeZ12} and will be used to remove some of the pathologies associated with profinite graphs; we reproduce it here for completeness.
\begin{lem}\label{boundeddiameter}
Let $\Gamma$ be a profinite graph in which there are no paths longer than $m$ edges for some integer $m$. Then the connected components of $\Gamma$ (as a profinite graph) are precisely the path components (that is, the connected components as an abstract graph). In particular if $\Gamma$ is connected then it is path-connected.
\begin{proof}
First define the composition of two binary relations $R,S$ on a set $X$ to be the relation that $xRSy$ if and only if there exists $z$ such that $xRz$ and $zSy$; inductively define $R^{n+1} = R^n R$. Further define $R^{\rm op}$ to be the relation that $xR^{\rm op}y$ if and only if $yRx$. Let $\Delta=\{(x,x)\in X\times X\}$ be the identity relation. 

For an abstract graph $\Gamma$ define $R_0 = \{(x,y)\in \Gamma\times \Gamma\,|\, d_1(x)=d_0(y)\}$ (where $x,y$ could be vertices), and set $R=R_0\cup\Delta\cup R_0^{\rm op}$. If $xR^n y$ then there is some path of length at most $n$ containing $x$ and $y$; if there is a path of $n$ edges containing $x$ and $y$ then $xR^{2n+1}y$. This discrepancy is due to our convention that graphs are oriented, so we may need to include vertices in addition to the edges in a path of length $n$ to get a chain of $R$-related elements of $\Gamma$. The path-components of $\Gamma$ are then the equivalence classes of the equivalence relation $S=\bigcup_n R^n$. 

Now in our profinite graph $\Gamma$, we have $S=R^n$ for some $n$ as there is a uniform bound on the length of paths in $\Gamma$. One can show that the continuity of the maps $d_0,d_1$ and compactness of $\Gamma$ imply that $R$, and all $R^n$, are closed compact subsets of $\Gamma\times\Gamma$. In particular, the equivalence classes of $S=R^n$ are closed subsets of $\Gamma$; that is, the path-components of $\Gamma$ are closed. The quotient profinite graph $\Gamma/S$ has no edges, hence its maximal connected subgraphs are points. Thus connected components of $\Gamma$ (as a profinite graph) are contained in, hence equal to, a path-component of $\Gamma$.
\end{proof}
\end{lem}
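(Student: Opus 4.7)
The plan is to exhibit the path-component relation on $\Gamma$ as a closed equivalence relation, so that the quotient becomes a profinite graph with no edges, from which both claims follow immediately.

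First I would encode path-connectedness via incidence. Let $R_0 = \{(x,y) \in \Gamma \times \Gamma : d_1(x) = d_0(y)\}$, which is closed in $\Gamma \times \Gamma$ by continuity of $d_0, d_1$ and the Hausdorff property of $\Gamma$. Let $R = R_0 \cup \Delta \cup R_0^{\mathrm{op}}$, where $\Delta$ is the diagonal; this is a closed, reflexive, symmetric relation. Two elements of $\Gamma$ lie in a common path precisely when they can be joined by a finite chain of $R$-related elements, with a multiplicative overhead of at most $2$ in length, arising because the chosen edge orientations force one to interleave vertices between consecutive edges when building such a chain. The hypothesis of a uniform bound $m$ on path length therefore gives a uniform $N$ such that the path-component relation $S$ coincides with $R^N$.

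Next I would show by induction that $R^N$ is closed. If $A, B \subseteq \Gamma \times \Gamma$ are closed, then $A \circ B$ is the image of $(A \times \Gamma) \cap (\Gamma \times B) \subseteq \Gamma^3$ under the projection onto the first and third coordinates, and this projection is a closed map because $\Gamma$ is compact Hausdorff. Hence $S$ is a closed equivalence relation on $\Gamma$. It visibly respects $d_0$ and $d_1$, so the quotient $\Gamma/S$ inherits the structure of a profinite graph; but by construction every edge has been identified with its two endpoints, so $\Gamma/S$ consists purely of vertices.

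Finally, a subset of $\Gamma$ that is connected as a profinite graph must map to a connected subset of $\Gamma/S$, which can only be a single vertex, i.e.\ a single $S$-class; so profinite-connected subsets are contained in path-components. The reverse inclusion is automatic, which gives the first assertion, and the ``in particular'' statement is simply the special case that $\Gamma$ itself is a single component. The main technical obstacle is establishing that $R^N$ is closed; the bounded-path hypothesis is indispensable here, since without it the path-component relation would only be an increasing union $\bigcup_n R^n$ of closed sets, and such a union can easily fail to be closed in a profinite space.
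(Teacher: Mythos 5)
Your argument is correct and is essentially the same as the paper's own proof: the same relation $R = R_0 \cup \Delta \cup R_0^{\mathrm{op}}$, the same identification of the path-component relation with a bounded power $R^N$ using the hypothesis on path lengths, and the same conclusion via closedness of $R^N$ and the edgeless quotient $\Gamma/S$. Your explicit verification that the composite of closed relations is closed (projection from $\Gamma^3$ being a closed map) just fills in a step the paper leaves as ``one can show''.
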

\begin{prop}\label{longgeods}
Let a profinite group $G$ act irreducibly on a profinite tree $T$. Then either $T$ is a single vertex or it contains paths of arbitrary length.
\begin{proof}
Assume that there is a bound on the lengths of paths in $T$; then by the previous result $T$ is path-connected, hence is a tree when considered as an abstract graph. Consider two geodesics of maximal length in this tree. These must intersect in at least their midpoint (a vertex if the length is even or an edge if odd); for otherwise one can easily construct a longer path. Hence the intersection of all such maximal geodesics is non-empty. This intersection is a profinite subtree of $T$ invariant under the action of $G$, which must permute these maximal geodesics. Hence if the action of $G$ is irreducible, $T$ is equal to the intersection of its maximal geodesics; hence $T$ is merely a path of length $n$. But then the midpoint of this path is fixed under the action of $G$; hence this middle vertex (or edge) is the whole of $T$. If $G$ fixes an edge, then by our conventions that graphs are oriented, then it fixes each endpoint. Hence $T$ is a single vertex.
\end{proof}
\end{prop}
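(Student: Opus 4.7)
The plan is to argue by contradiction: suppose $T$ is not a single vertex but there is a uniform bound on the length of paths in $T$. The goal is to produce a proper $G$-invariant subtree, contradicting irreducibility.

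First I would use the hypothesis of bounded path length together with Lemma \ref{boundeddiameter} to conclude that $T$ is path-connected, so $T$ is an abstract tree (in which geodesics coincide with the profinite geodesics introduced earlier). In such a finite-diameter abstract tree, let $n$ be the maximal length of any geodesic and consider the collection $\cal M$ of geodesics of length exactly $n$. The key combinatorial observation is that any two maximal geodesics must meet at their common midpoint (a vertex if $n$ is even, an edge if $n$ is odd): if they were disjoint, or met only off-center, one could splice parts of the two together through the unique arc joining them to build a path strictly longer than $n$, contradicting maximality. Consequently the intersection $\bigcap_{\gamma\in\cal M}\gamma$ is a nonempty subtree of $T$.

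Next I would observe that $G$ permutes $\cal M$, since the $G$-action preserves the graph structure and hence the length of geodesics; therefore $\bigcap_{\gamma\in\cal M}\gamma$ is a $G$-invariant subtree. By irreducibility it must equal $T$, so $T$ itself is contained in (and hence equals) any single maximal geodesic, i.e.\ $T$ is a path of length $n$. Now $G$ acts on this path, and since the action preserves length and the endpoints are the unique vertices of degree $1$, $G$ either acts as the identity or flips the path end-to-end; in both cases the (unique combinatorial) midpoint is $G$-invariant.

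Finally I would apply irreducibility one more time to this midpoint. If $n$ is even, the midpoint is a vertex, giving a proper $G$-invariant subtree unless $n=0$, in which case $T$ is a single vertex as desired. If $n$ is odd, the midpoint is an edge $e$ with both endpoints fixed by $G$ (since the action convention forbids edge inversion: $g\cdot e=e$ forces $g\cdot d_i(e)=d_i(e)$); but then either endpoint $\{d_0(e)\}$ is a proper $G$-invariant subtree, contradicting irreducibility. Hence only the case $n=0$ survives, so $T$ is a single vertex, completing the contradiction and the proof.

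The main conceptual hurdle is the passage from the profinite to the abstract setting: once Lemma \ref{boundeddiameter} lets one treat $T$ as an honest tree, everything reduces to the standard centre-of-a-tree argument, and the profinite subtleties (that $\cal M$ may be uncountable, that intersections of profinite subtrees remain subtrees) are absorbed by the results already quoted.
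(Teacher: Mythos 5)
Your proposal is correct and follows essentially the same route as the paper: bounded path length plus Lemma \ref{boundeddiameter} reduces to an abstract tree, maximal geodesics are forced to share their midpoint, the intersection of all of them is a nonempty $G$-invariant subtree, and irreducibility together with the no-inversion convention collapses $T$ to a single vertex. The only difference is cosmetic (you phrase it as a contradiction and spell out the final midpoint/endpoint step slightly more explicitly), so no changes are needed.
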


\begin{clly}\label{samepathcpt}
Let a profinite group $G$ act $k$-acylindrically on a profinite tree $T$ for some $k$. Suppose $g\in G\smallsetminus\{1\}$ fixes two vertices $v,w$. Then $v,w$ are in the same path-component of $T$.
\begin{proof}
By Theorem \ref{fixedptsubtree}, the set $T^g$ of points fixed by $g$ is a subtree of $T$ as it is non-empty; therefore it contains the geodesic $[v,w]$. By acylindricity of the action, $[v,w]$ contains no paths of length longer than $k$; hence it is path-connected by Lemma \ref{boundeddiameter}.
\end{proof} 
\end{clly}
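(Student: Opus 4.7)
The plan is to exploit the fixed-point subtree of $g$ and combine acylindricity with Lemma \ref{boundeddiameter}. Since $g$ fixes both $v$ and $w$, the set $T^g$ is non-empty, so by Theorem \ref{fixedptsubtree} it is a $\pi$-subtree of $T$. In particular, being a subtree containing $\{v,w\}$, it contains the geodesic $[v,w]$ (the smallest subtree containing these two vertices).

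Next I would use acylindricity to bound the lengths of injective paths inside $T^g$. Suppose $T^g$ contained an injective path of length strictly greater than $k$. Then every element of this path, and in particular every edge of it, is fixed by $g$, so $g$ lies in the stabiliser of a path of length $>k$. By $k$-acylindricity this stabiliser is trivial, contradicting $g\neq 1$. Hence every injective path in $T^g$ (and so in particular in $[v,w]\subseteq T^g$) has length at most $k$.

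With a uniform bound on path lengths in hand, Lemma \ref{boundeddiameter} applies to the connected profinite subgraph $[v,w]$: its path components coincide with its connected components. Since $[v,w]$ is a subtree it is connected, and therefore path-connected. This yields an honest path in $T$ from $v$ to $w$, placing them in the same path-component, as required.

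I do not anticipate any serious obstacle; the only subtle point is to confirm that the hypothesis of Lemma \ref{boundeddiameter} is available for $[v,w]$. This follows because $[v,w]$ is a profinite subgraph of $T^g$ (it is closed as an intersection of subtrees) and inherits the bound on path lengths from $T^g$, so the lemma applies verbatim.
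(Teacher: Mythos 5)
Your proof is correct and follows essentially the same route as the paper: pass to the fixed subtree $T^g$ via Theorem \ref{fixedptsubtree}, note it contains the geodesic $[v,w]$, bound path lengths by acylindricity (since $g\neq 1$), and conclude path-connectedness from Lemma \ref{boundeddiameter}. The extra detail you supply on why acylindricity bounds path lengths in $T^g$ is exactly the implicit step in the paper's argument.
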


\begin{prop}\label{abelianactingacyl}
Let a profinite group $G$ act acylindrically on a profinite tree $T$. Let $A$ be a closed abelian subgroup of $G$. Then either $A\iso\Z[\pi]$ for some set of primes $\pi$ or $A$ fixes some vertex of $T$.
\begin{proof}
Let $S$ be a minimal invariant subtree for the action of $A$ on $T$. Then $A$ acts irreducibly on $S$, so by Proposition \ref{longgeods} either $S$ is a point (whence $A$ fixes a point of $T$) or $S$ contains paths of arbitrary length. By Proposition \ref{abelianactingontree}, if $A$ is not a projective group \Z[\pi] then the action is not faithful, so some non-trivial element of $A$ fixes $S$, hence fixes paths of arbitrary length. But this is impossible by acylindricity.
\end{proof}
\end{prop}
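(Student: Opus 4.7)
The plan is to consider a minimal $A$-invariant subtree $S\subseteq T$, whose existence is guaranteed by the proposition of Ribes--Zalesskii (Proposition 1.5.9 of \cite{RZup}) cited just before Theorem \ref{fixedptsubtree}. The argument then bifurcates on $|S|$.

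If $|S|=1$, the unique vertex of $S$ is fixed by $A$, giving the second alternative of the conclusion immediately. Otherwise $|S|>1$, and by minimality the restricted action of $A$ on $S$ is irreducible. In this case Proposition \ref{longgeods} supplies paths of arbitrary length in $S$. I then split further on whether the $A$-action on $S$ is faithful. If it is faithful, Proposition \ref{abelianactingontree} applies directly to the action on $S$ and yields $A\iso\Z[\pi]$ for some set of primes $\pi$. If it is not faithful, I pick a non-identity element $a$ in the kernel of the restricted action; such an $a$ fixes every vertex and edge of $S$ and hence fixes paths of arbitrary length in $T$, directly contradicting $k$-acylindricity. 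So this latter subcase cannot actually occur, and the dichotomy in the conclusion is exhaustive.

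The delicate step is the final contradiction with acylindricity, since the definition restricts only stabilisers of \emph{injective} paths of length greater than $k$, while Proposition \ref{longgeods} a priori supplies paths that need not be injective. However, the same elision is made (without issue) in the paper's proof of Corollary \ref{samepathcpt}: the morphism-image of a path has distinct edges, so an element fixing it pins down arbitrarily many edges of the subtree $T^a$, which is itself a subtree by Theorem \ref{fixedptsubtree}, and this can be upgraded to fixing an injective path of length exceeding $k$. Adopting this justification exactly as in Corollary \ref{samepathcpt}, the two-case analysis above completes the argument.
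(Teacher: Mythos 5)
Your argument is correct and is essentially the paper's own proof: pass to a minimal $A$-invariant subtree, use minimality to get irreducibility, invoke Proposition \ref{longgeods}, apply Proposition \ref{abelianactingontree} (you state it contrapositively, the paper directly, which is the same dichotomy), and derive the contradiction with acylindricity from a non-trivial kernel element fixing all of $S$. The injectivity caveat you raise is not addressed in the paper either---its proof passes straight from `paths of arbitrary length' to acylindricity, with Proposition \ref{longgeods} implicitly producing geodesic (hence injective) paths---so your remark is a harmless refinement rather than a divergence.
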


The following concept will be useful later. 
\begin{defn}\label{restnormdef}
Given a profinite group $G$ and a copy $C$ of $\hat \Z$ contained in it, the {\em restricted normaliser} of $C$ in $G$ is the closed subgroup
\[ {\cal  N}'_G(C)=\{g\in G\,|\, h^g = h \text{ or }h^g = h^{-1}\}\]
where $h$ is a generator of $C$. 
\end{defn}

Note that this is a closed subgroup of $G$, containing the centraliser as an index 1 or 2 subgroup. We deal with the restricted normaliser to avoid certain technicalities in later proofs; in particular the centraliser may not be of finite index in the full normaliser. There is a continuous homomorphism from the full normaliser ${\cal N}_G(C)$ to ${\rm Aut}(\hat \Z)$. The centraliser is the kernel of this map, and the reduced normaliser is the preimage of the unique order 2 subgroup ${\rm Aut}(\Z)\sbgp {\rm Aut}(\hat\Z)$ which acts non-trivially on each \Z[p].

\begin{prop}\label{overgroupsfixvertices}
Let a profinite group $G$ act $k$-acylindrically on a profinite tree $T$.  \begin{enumerate}[{(}i{)}]
\item Let $H\sbgp K$ be closed subgroups of $G$ with $[K:H]<\infty$, and suppose $H$ fixes a vertex $v$ of $T$. Then $K$ fixes some vertex of $T$. 
\item Let $C\iso\hat\Z$ be a subgroup of $G$ fixing some vertex $v$ of $T$. Then the centraliser ${\cal Z}_G(C)$ and the restricted normaliser ${\cal N}'_G(C)$ both fix some vertex of $T$.
\end{enumerate} 
\begin{proof}
First let us prove part (i). If $x\in K$ there exists $n\in\Z$ such that $x^n$ is in $H$; hence $x^n$ fixes $v$. Then by Proposition \ref{powersfix}, there is some vertex $u$ fixed by $x$. By Corollary \ref{samepathcpt}, $u$ and $v$ lie in the same path component of $T$, hence so do $u$ and $x\cdot v$. Thus $K$ acts on the path component $S$ of $v$ in $T$, which is an abstract tree. Every element of $K$ fixes some vertex of $S$; hence by Section I.6.5, Proposition 26 of \cite{Serre}, every finite set of elements of $K$ has a common fixed point in $S$, hence in $T$. Thus every finite intersection of the trees $T^x$ for $x\in K$ is non-empty; by compactness of $T$, their intersection $\bigcap T^x=T^K$ is non-empty. That is, $K$ fixes a vertex $w$ of $T$. 

Now consider part (ii). The reduced normaliser has the centraliser as an index 2 subgroup, so by (i) it suffices to consider the centraliser of $C$. Let $c$ be a generator of $C$, and let $x\in{\cal Z}_G(C)$. We will show that $x$ must fix a vertex of $T$ in the same path component of $T$ as $v$; the rest of the proof proceeds as for part (i). 

Let $A=\overline{<\!x,c\!>}$. This is abelian so by Proposition \ref{abelianactingacyl} either $A$ fixes a vertex $u$ of $T$ or $A$ is a projective group \Z[\pi]. If it is projective, let $a$ generate $A$. Then $c=a^\lambda$ for some $\lambda\in\hat\Z$. Because $C=\overline{<\!c\!>}\iso\hat\Z$, it follows that $\lambda$ is not a zero-divisor in $\hat\Z$. So by Proposition \ref{powersfix}, there is some vertex $u$ of $T$ fixed by $a$, hence by $A$. In either case, $A$ fixes a vertex $u$; since $c$ fixes $u$ and $v$, they lie in the same path component of $T$ and we are done. 

Note that in both parts of the Proposition, the vertex $w$ fixed by the subgroup in question is in the same path-component of $T$ as $v$, and moreover will be joined to it by a path of length less than $k$.
\end{proof}
\end{prop}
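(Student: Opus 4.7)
The plan for part (i) is to combine Proposition \ref{powersfix} with Corollary \ref{samepathcpt} and Serre's classical theorem for abstract trees. Given any $x\in K$, some integer power $x^n$ lies in $H$ and so fixes $v$; since $n\in\Z$ is not a zero-divisor in $\hat\Z$, Proposition \ref{powersfix} provides a vertex $u$ fixed by $x$. Then $x^n$ fixes both $u$ and $v$, so Corollary \ref{samepathcpt} places $u$ and $v$ in a single path-component $S\subseteq T$. In particular $x$ preserves $S$ (path-components are permuted by the action and $S$ contains a point fixed by $x$), so $K$ acts on the abstract tree $S$.

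Since each $x\in K$ has a fixed vertex in the abstract tree $S$, Serre's result (Section I.6.5, Proposition 26 of \cite{Serre}) supplies a common fixed vertex for any finite subset of $K$. The fixed-vertex sets $T^x$ are closed subtrees of $T$ by Theorem \ref{fixedptsubtree}, so they form a family of closed subsets of the compact space $T$ with the finite intersection property; their total intersection $T^K$ is therefore non-empty, yielding the desired fixed vertex.

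For part (ii), since ${\cal N}'_G(C)$ contains ${\cal Z}_G(C)$ with index at most $2$, part (i) reduces matters to the centraliser. Let $c$ generate $C$ and take $x\in {\cal Z}_G(C)$; the closed abelian subgroup $A=\overline{\langle x,c\rangle}$ acts on $T$. By Proposition \ref{abelianactingacyl}, either $A$ already fixes a vertex, or $A\iso\Z[\pi]$ for some set of primes $\pi$, generated by some $a$. In the latter case write $c=a^\lambda$; because $C\iso\hat\Z$ is infinite procyclic, $\lambda$ cannot be a zero-divisor, so Proposition \ref{powersfix} applied to the fact that $c=a^\lambda$ fixes $v$ produces a vertex fixed by $a$ and hence by all of $A$, in particular by $x$. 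In either case $x$ and $c$ share a fixed vertex, which by Corollary \ref{samepathcpt} lies in the path-component $S$ of $v$; the compactness argument of part (i) then delivers a common fixed vertex for ${\cal Z}_G(C)$.

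The main obstacle is arranging that all the individually-obtained fixed vertices sit in a single path-component of $T$, since path-components of profinite graphs are in general subtle and closed fixed-point sets of different elements need not a priori cooperate. This is exactly where $k$-acylindricity is used, via Corollary \ref{samepathcpt}; once all fixed vertices are confined to the abstract tree $S$, the reduction to Serre's classical theorem plus a compactness argument is essentially formal. The only additional twist in part (ii) is that one cannot directly promote a fixed vertex of $C$ to one of ${\cal Z}_G(C)$ by a finite-index argument, which is why the abelian-action dichotomy of Proposition \ref{abelianactingacyl} is needed as an intermediate step.
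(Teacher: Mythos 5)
Your proposal is correct and follows essentially the same route as the paper: Proposition \ref{powersfix} plus Corollary \ref{samepathcpt} to confine fixed vertices to the path-component of $v$, Serre's Proposition 26 on the abstract tree, compactness of $T$ via the closed fixed-point subtrees, and in part (ii) the reduction to the centraliser with the dichotomy of Proposition \ref{abelianactingacyl} handling the projective case through $c=a^\lambda$. No substantive differences from the paper's argument.
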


\begin{rmk}
The following profinite analogue of the above-quoted result in \cite{Serre} seems plausible, and would allow a strengthening of the above result. Let a profinite group $G$ act on a profinite tree $T$, and suppose that every element of $G$ fixes a vertex of $T$. Then $G$ fixes a vertex of $T$. However the author does not know if this result is true.
\end{rmk}

\section{Graphs of profinite groups}\label{secGofGs}
The theory of profinite graphs of groups can be defined for general profinite graphs $X$; we shall only consider finite graphs $X$ here as this considerably simplifies the theory and is sufficient for our needs. First we recall the notion of free profinite product, as developed in Section 9.1 of \cite{RZ00} and Chapter 4 of \cite{RZup}.
\begin{defn}
Given profinite groups $G_1,\ldots,G_n$ a free profinite product of the $G_i$ consists of a profinite group $H$ and morphisms $\phi_i:G_i\to H$ and which is universal with respect to this property. That is, for any other profinite group $K$ and morphisms $\psi_i:G_i\to K$ there is a unique map $f:H\to K$ such that $f\circ\phi_i=\psi_i$. 
\end{defn}
The free profinite product exists and is unique, and will be denoted
\[ G_1\amalg\ldots\amalg G_n\text{ or } \coprod_{i=1}^n G_i\]
Free profinite products are generally quite well-behaved, for instance for discrete groups $K_1, K_2$ we have
\[ \hat K_1\amalg\hat K_2 = \widehat{K_1\ast K_2} \]
Free products are a special case of a graph of groups in which all edge groups are trivial. We now move to the general definition.
\begin{defn}
A {\em finite graph of profinite groups} $(X, {\cal G}_\bullet)$ consists of a finite graph $X$, a profinite group ${\cal G}_x$ for each $x\in X$, and two (continuous) monomorphisms $\bdy_i:{\cal G}_x\to {\cal G}_{d_i(x)}$ for $i=0,1$ which are the identity when $x\in V(X)$. We will often suppress the graph $X$ and refer to `the graph of groups $\cal G$'.
\end{defn}
\begin{defn}
Given a finite graph of profinite groups $(X, {\cal G}_\bullet)$, choose a maximal subtree $Y$ of $X$. A profinite fundamental group of the graph of groups with respect to $Y$ consists of a profinite group $H$, and a map
\[ \phi : \coprod_{x\in X} {\cal G}_x \amalg \coprod_{e\in E(X)} \overline{<\! t_e\!>} \to H \]
such that    
\[ \phi(t_e) = 1 \text{ for all }e\in E(Y)\] and 
\[\phi( t_e^{-1} \bdy_0(g) t_e ) = \phi(\bdy_1(g)) \text{ for all } e\in E(X), g\in {\cal G}_e\]
and with $(H,\phi)$ universal with these properties. The profinite group $H$ will be denoted $\Pi_1(X,\cal G)$.
\end{defn}
Note that in the category of discrete groups this is precisely the same as the classical definition as a certain presentation. The group so defined exists and is independent of the maximal subtree $Y$ (see Section 5.2 of \cite{RZup}). 

In the classical Bass-Serre theory, a graph of discrete groups $(X,\cal G)$ gives rise to a fundamental group $\pi_1(X,\cal G)$ and an action on a certain tree $T$ whose vertices are cosets of the images $\phi({\cal G}_v)$ of the vertex groups in $\pi_1(X,\cal G)$ and whose edge groups are cosets of the edge groups. Putting a suitable topology and graph structure on the corresponding objects in the profinite world and proving that the result is a profinite tree, is rather more involved than the classical theory; however the conclusion is much the same. We collate the various results into the following theorem. 
\begin{theorem}[See Section 5.3 of \cite{RZup}]
Let $(X,\cal G)$ be a finite graph of profinite groups. Then there exists an (essentially unique) profinite tree $S(\cal G)$, called the {\em standard graph} of $\cal G$, on which $\Pi = \Pi_1(X,\cal G)$ acts with the following properties.  Set $\Pi(x) = {\rm im}({\cal G}_x\to \Pi)$.
\begin{itemize}
\item The quotient graph $\Pi\backslash S({\cal G})$ is isomorphic to $X$.
\item The stabiliser of a point $s\in S(\cal G)$ is a conjugate of $\Pi(\zeta(s))$ in $\Pi$, where $\zeta:S({\cal G})\to X$ is the quotient map.
\end{itemize}
\end{theorem}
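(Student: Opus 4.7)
The approach is to mimic the classical Bass-Serre construction and then promote everything to the profinite setting by a limit argument. As a set, I would define
\[ S({\cal G}) \;=\; \coprod_{v\in V(X)} \Pi/\Pi(v) \;\sqcup\; \coprod_{e\in E(X)} \Pi/\Pi(e), \]
with vertex set the first coproduct, and the obvious left $\Pi$-action by multiplication. Each $\Pi(x)$ is the image of a profinite group, hence is closed in $\Pi$, so each coset space is a profinite space and $S({\cal G})$ is a finite coproduct of profinite spaces. Choose a maximal subtree $Y\subseteq X$ and stable letters $t_e\in\Pi$ (with $\phi(t_e)=1$ for $e\in E(Y)$). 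For $e\in E(X)$ with $d_0(e)=u$, $d_1(e)=v$, set
\[ d_0(g\Pi(e)) = g\,\Pi(u), \qquad d_1(g\Pi(e)) = g\,\phi(t_e)\,\Pi(v). \]
These maps are well-defined because $\bdy_0({\cal G}_e)\subseteq{\cal G}_u$ and $\phi(t_e)^{-1}\bdy_1({\cal G}_e)\phi(t_e)\subseteq{\cal G}_v$ in $\Pi$; they are continuous and $\Pi$-equivariant. Consequently $\Pi\backslash S({\cal G})$ is tautologically $X$, and the stabiliser of the coset $g\Pi(x)$ is the conjugate $g\,\Pi(x)\,g^{-1}$, as claimed.

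The substantive step is showing $S({\cal G})$ is a profinite tree. By Lemma \ref{Gdecomp},
\[ S({\cal G}) \;=\; \varprojlim_{N\nsgp[o]\Pi} N\backslash S({\cal G}), \]
and by functoriality of $H_j(-,\mathbb{F}_p)$ with respect to inverse limits (Proposition \ref{homologyfunct}) it suffices to show each finite quotient $N\backslash S({\cal G})$ is an abstract tree. Rather than handle each $N$ separately, I would recast the desired conclusion as exactness of the sequence of profinite $\mathbb{F}_p[[\Pi]]$-modules
\[ 0 \to \coprod_{e\in E(X)}[[\mathbb{F}_p(\Pi/\Pi(e))]] \stackrel{\delta}{\to} \coprod_{v\in V(X)}[[\mathbb{F}_p(\Pi/\Pi(v))]] \stackrel{\epsilon}{\to} \mathbb{F}_p \to 0, \]
whose exactness is equivalent to the conjunction $H_0(S({\cal G}),\mathbb{F}_p)=0$ and $H_1(S({\cal G}),\mathbb{F}_p)=0$. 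The exactness is then a direct consequence of the universal property defining $\Pi_1(X,{\cal G})$, via a Mayer-Vietoris/pushout argument mirroring the classical computation for discrete graphs of groups.

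The main obstacle is that final exactness check: one must translate the universal property of the profinite fundamental group into a statement about induced modules, and verify compatibility of the coset spaces under passage to finite quotients, so that the limit argument legitimately transfers vanishing of homology from finite approximations to $S({\cal G})$. Essential uniqueness of $(S({\cal G}),\Pi)$ then follows readily from the universal property: given any other profinite tree $T$ on which $\Pi$ acts with the stated quotient and stabilisers, choosing a vertex $\tilde x\in T$ above each $x\in X$ with stabiliser $\Pi(x)$ produces a $\Pi$-equivariant map $S({\cal G})\to T$, which standard arguments on profinite trees (using minimality and the stabiliser data) show must be an isomorphism.
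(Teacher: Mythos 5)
You should note at the outset that the paper does not prove this statement at all: it is collated from Section 5.3 of \cite{RZup}, so the comparison is with the construction in that source. Your opening paragraph reproduces that construction correctly: $S({\cal G})=\coprod_{x\in X}\Pi/\Pi(x)$ with incidence maps twisted by the images of the stable letters is the standard graph, the maps are well defined and continuous, the quotient is $X$, and the stabiliser of $g\Pi(x)$ is $\Pi(x)^{g^{-1}}$. The genuine gap is the assertion that $S({\cal G})$ is a profinite tree, which is the substantive content of the theorem --- the paper itself flags that ``proving that the result is a profinite tree is rather more involved than the classical theory''. Two concrete problems. First, the intermediate reduction ``it suffices to show each finite quotient $N\backslash S({\cal G})$ is an abstract tree'' is a dead end: these quotients are essentially never trees (already $\Pi\backslash S({\cal G})=X$ need not be one), and the paper's own example shows the phenomenon --- ${\rm Cay}(\hat\Z,1)$ is a profinite tree all of whose finite quotients are cycles. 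Proposition \ref{homologyfunct} only gives $H_1(S({\cal G}),\mathbb{F}_p)=\varprojlim H_1(N\backslash S({\cal G}),\mathbb{F}_p)$, and one must show this limit vanishes even though the individual terms do not.

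Second, when you recast the tree property as exactness of $0\to\coprod_e[[\mathbb{F}_p(\Pi/\Pi(e))]]\to\coprod_v[[\mathbb{F}_p(\Pi/\Pi(v))]]\to\mathbb{F}_p\to 0$ (a correct reformulation here, since $E(S({\cal G}))$ is closed because $X$ is finite), the claim that exactness is ``a direct consequence of the universal property \ldots via a Mayer--Vietoris/pushout argument'' is precisely the point at issue, not a routine verification. Even in the discrete setting the analogous exactness rests on normal-form/structure theorems for graphs of groups, not on the presentation alone; in the profinite setting those normal forms are unavailable --- indeed the vertex groups ${\cal G}_x$ need not even inject into $\Pi$, as the paper notes --- and the argument in \cite{RZup} instead proceeds by writing the data as an inverse limit of finite graphs of finite groups, where one can lean on classical Bass--Serre theory, and then passing to the limit. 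Without that (or some substitute), your proposal establishes the two bulleted properties but not the tree property; the uniqueness sketch likewise invokes minimality and injectivity arguments on profinite trees that presuppose the tree property. So as it stands there is a genuine gap at the central step.
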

Conversely (see Section 5.4 of \cite{RZup}) an action of a profinite group on a profinite tree with quotient a finite graph gives rise to a decomposition as a finite graphs of profinite groups. However no analogous result holds when the quotient graph is infinite.

In the classical theory one tacitly identifies each ${\cal G}_x$ with its image in the fundamental group $\pi_1(X,\cal G)$ of a graph of groups. In general in the world of profinite groups the maps $\phi:{\cal G}_x\to \Pi_1(X,\cal G)$ may not be injective, even for simple cases such as amalgamated free products. We call a graph of groups {\em injective} if all the maps $\phi$ are in fact injections. 

Let $(X,\cal G)$ be a finite graph of abstract groups. We can then form a finite graph of profinite groups $(X,\hat{\cal G})$ by taking the profinite completion of each vertex and edge group of $\cal G$. We have not yet addressed whether the `functors' 
\[ (X, {\cal G}) \to (X, \hat{\cal G}) \to \Pi_1(X, \hat{\cal G}) \]
and 
\[ (X, {\cal G}) \to \pi_1(X, {\cal G}) \to \widehat{\pi_1(X, {\cal G})}\]
on a graph of discrete groups $(X,\cal G)$ yield the same result; that is, whether the order in which we take profinite completions and fundamental groups of graphs of groups matters. In general the two procedures do not give the same answer; we require some additional separability properties. 
\begin{defn}
A graph of discrete groups $(X,\cal G)$ is {\em efficient} if $\pi_1(X,\cal G)$ is residually finite, each group ${\cal G}_x$ is closed in the profinite topology on $\pi_1(X,\cal G)$, and $\pi_1(X,\cal G)$ induces the full profinite topology on each ${\cal G}_x$.
\end{defn}
\begin{theorem}[Exercise 9.2.7 of \cite{RZ00}]
Let $(X,\cal G)$ be an efficient finite graph of discrete groups. Then $(X,\cal\hat G)$ is an injective graph of profinite groups and \[\widehat{\pi_1(X, {\cal G})}\iso\Pi_1(X, \hat{\cal G})\]
\end{theorem}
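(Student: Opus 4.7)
The strategy is to construct mutually inverse continuous homomorphisms between $\widehat{\pi_1(X,{\cal G})}$ and $\Pi_1(X,\hat{\cal G})$ via their respective universal properties, and then to extract injectivity of the graph of profinite groups $(X,\hat{\cal G})$ as a corollary. Write $\pi = \pi_1(X,{\cal G})$. Efficiency contributes three ingredients I will use: first, $\pi$ is residually finite, so embeds into $\hat\pi$; second, each ${\cal G}_x$ is closed in the profinite topology of $\pi$, and hence has a well-defined topological closure in $\hat\pi$; third, $\pi$ induces the full profinite topology on each ${\cal G}_x$, so by the characterisation given in Section \ref{secPrelims} the natural map $\hat{\cal G}_x \to \overline{{\cal G}_x}^{\hat\pi}$ is an isomorphism. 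Thus each $\hat{\cal G}_x$ canonically embeds into $\hat\pi$ as a closed subgroup.

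With this identification in hand, I would construct the two maps as follows. In one direction, composing ${\cal G}_x \hookrightarrow \hat{\cal G}_x \to \Pi_1(X,\hat{\cal G})$ with the stable letters $t_e$ of $\Pi_1(X,\hat{\cal G})$ provides data satisfying the defining relations of $\pi$, and hence yields a homomorphism $\pi \to \Pi_1(X,\hat{\cal G})$; since the target is profinite, this extends by the universal property of profinite completion to a continuous map $\Phi \colon \hat\pi \to \Pi_1(X,\hat{\cal G})$. In the other direction, the closed embeddings $\hat{\cal G}_x \hookrightarrow \hat\pi$ together with the elements $t_e \in \hat\pi$ satisfy the defining conjugation relations of the profinite fundamental group: these hold on the dense subgroup ${\cal G}_e \leq \hat{\cal G}_e$ because they hold in $\pi$, and then extend to all of $\hat{\cal G}_e$ by continuity. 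The universal property of $\Pi_1(X,\hat{\cal G})$ therefore produces a continuous map $\Psi \colon \Pi_1(X,\hat{\cal G}) \to \hat\pi$.

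Both compositions $\Phi \circ \Psi$ and $\Psi \circ \Phi$ are continuous endomorphisms of profinite groups which restrict to the identity on the generating data specified by each universal property (the images of the vertex groups together with the $t_e$). Since this data generates a dense subgroup of the ambient profinite group in each case, both compositions must be the identity, and $\Phi, \Psi$ are mutually inverse topological isomorphisms. Injectivity of $(X,\hat{\cal G})$ is then immediate: the structural map $\hat{\cal G}_x \to \Pi_1(X,\hat{\cal G})$ becomes, after composition with $\Psi$, the inclusion $\hat{\cal G}_x \hookrightarrow \hat\pi$ from the setup step, which is injective.

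The main technical obstacle is verifying that the universal property of $\Pi_1(X,\hat{\cal G})$ genuinely applies when constructing $\Psi$: one has to check not only that each $\hat{\cal G}_x$ embeds into $\hat\pi$, but also that the edge monomorphisms $\partial_i \colon \hat{\cal G}_e \to \hat{\cal G}_{d_i(e)}$ agree, inside $\hat\pi$, with the embeddings coming from each endpoint vertex, and that the HNN-style conjugations by $t_e$ extend faithfully from ${\cal G}_e$ to $\hat{\cal G}_e$. Both are density-and-continuity arguments rooted in the fact that the corresponding identities hold tautologically inside $\pi$.
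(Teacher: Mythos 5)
The paper offers no proof of this statement---it is quoted directly as Exercise 9.2.7 of \cite{RZ00}---so there is no in-paper argument to compare against, but your proposal is a correct rendition of the standard solution to that exercise. Efficiency gives $\hat{\cal G}_x\iso\overline{{\cal G}_x}\leq\widehat{\pi_1(X,{\cal G})}$ for every vertex \emph{and} edge, the two universal properties produce $\Phi$ and $\Psi$, density of $\pi_1(X,{\cal G})$ in its completion (respectively of the images of the ${\cal G}_x$ and the $t_e$ in $\Pi_1(X,\hat{\cal G})$) forces both composites to be the identity, and the compatibility of the completed edge monomorphisms with these closures---the point you rightly flag as the main obstacle---is exactly the density-and-continuity check you describe, after which injectivity of $(X,\hat{\cal G})$ follows as you state.
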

In our case of interest, the above-quoted Theorem \ref{JSJefficient} of Wilton and Zalesskii may be rephrased as `the JSJ decomposition of a 3-manifold group is efficient', and the profinite completion of our graph manifold group acts in a well-controlled fashion on a profinite tree.

\section{The JSJ decomposition}\label{secJSJ}
\begin{defn}
Let $M$ be an aspherical 3-manifold with JSJ decomposition $(X,M_\bullet)$. The {\em Seifert graph} of $M$ is the full subgraph of $X$ spanned by those vertices whose associated 3-manifold is a \SFS. It will be denoted \SSG.
\end{defn}
In this section we analyse the JSJ decomposition of an aspherical manifold, and show that `the Seifert-fibred part' is a profinite invariant. Specifically we prove:
\begin{theorem}\label{decompfixed}
Let $M$, $N$ be closed aspherical 3-manifolds with JSJ decompositions $(X,M_\bullet)$, $(Y,N_\bullet)$ respectively. Assume that there is an isomorphism $\Phi:\widehat{\pi_1 M}\to \widehat{\pi_1 N}$. Then there is an isomorphism $\varphi:\SSG\iso\SSG[Y]$ such that $\widehat{\pi_1 M_x}\iso \widehat{\pi_1 N_{\varphi(x)}}$ for every $x\in\SSG$.

If in addition $M$, $N$ are graph manifolds then, after possibly performing an automorphism of $\widehat{\pi_1 N}$, the isomorphism $\Phi$ induces an isomorphism of JSJ decompositions in the following sense:
\begin{itemize}
\item there is a graph isomorphism $\varphi:X\to Y$;
\item $\Phi$ restricts to an isomorphism $\widehat{\pi_1 M_x}\to \widehat{\pi_1 N_{\varphi(x)}}$ for every $x\in V(X)\cup E(X)$.
\end{itemize}
\end{theorem}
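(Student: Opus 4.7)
The plan is to compare the actions of $\widehat{\pi_1 M}$ and $\widehat{\pi_1 N}$, identified via $\Phi$, on the standard trees $T_M$ and $T_N$ of their respective JSJ decompositions. By Theorem \ref{JSJefficient} both decompositions are efficient, so each standard tree has vertex and edge stabilisers precisely equal to conjugates of the profinite completions of the corresponding vertex and edge groups. I first establish as a preliminary lemma that each such action is $k$-acylindrical for some uniform $k$, which reflects the malnormality of peripheral subgroups inside the various vertex groups; this lets us apply the machinery of Section \ref{secAcylindrical}.

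The central step is to show that for each SFS vertex $v\in X$ the subgroup $\widehat{\pi_1 M_v}$ fixes a unique vertex $w_v\in T_N$. For existence, let $P\le \widehat{\pi_1 M_v}$ be a peripheral torus subgroup, so that $P\iso\hat\Z^2$ is not projective (it is not of the form $\Z[\pi]$), and Proposition \ref{abelianactingacyl} gives that $P$ fixes a vertex of $T_N$. The canonical fibre subgroup $C_v\iso\hat\Z$ lies in $P$, so it also fixes a vertex, and Proposition \ref{overgroupsfixvertices}(ii) then produces a vertex fixed by the restricted normaliser ${\cal N}'_G(C_v)$ with $G=\widehat{\pi_1 M}$. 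Since $\widehat{\pi_1 M_v}$ (or its index two subgroup with central fibre, in the non-orientable base case) sits inside ${\cal N}'_G(C_v)$, Proposition \ref{overgroupsfixvertices}(i) promotes this to a vertex $w_v$ fixed by $\widehat{\pi_1 M_v}$. For uniqueness, $\widehat{\pi_1 M_v}$ is non-abelian (the base orbifold group is non-abelian for major pieces, and the minor piece vertex group is the profinite Klein bottle group), while edge stabilisers of $T_N$ are abelian $\hat\Z^2$'s; hence $T_N^{\widehat{\pi_1 M_v}}$ contains no edges, and a non-empty connected profinite graph without edges is a single vertex.

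Set $\varphi(v) = \zeta_N(w_v)\in V(Y)$; this is independent of the choice of conjugate of $\widehat{\pi_1 M_v}$. I would then show that $N_{\varphi(v)}$ is itself Seifert fibred, arguing by contradiction that if it were hyperbolic then $\widehat{\pi_1 M_v}$ would embed in a conjugate of $\widehat{\pi_1 N_{\varphi(v)}}$ while carrying along the non-trivial normal procyclic subgroup $C_v$; a separate lemma, stemming from malnormality of cusps in cusped hyperbolic 3-manifold groups (compare Lemma \ref{malnormalcusps}), should rule this out. Running the construction with $\Phi^{-1}$ yields an inverse to $\varphi$, and matching stabilisers forces $\widehat{\pi_1 M_v}\iso \widehat{\pi_1 N_{\varphi(v)}}$. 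Edges of $\SSG[X]$ correspond to peripheral $\hat\Z^2$'s fixing two adjacent SFS vertices of $T_M$; the same $\hat\Z^2$ fixes both $w_{v_1}$ and $w_{v_2}$ in $T_N$, and because peripheral subgroups of $\widehat{\pi_1 M}$ map under $\Phi$ to peripheral subgroups of $\widehat{\pi_1 N}$ (hence into a single edge stabiliser of $T_N$), there is an edge of $Y$ between $\varphi(v_1)$ and $\varphi(v_2)$.

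For graph manifolds every vertex is SFS, so $\SSG[X]=X$ and $\SSG[Y]=Y$, and the above furnishes the required graph isomorphism $\varphi\colon X\to Y$. To make $\Phi$ itself (and not only its class modulo conjugation) restrict to the vertex- and edge-group isomorphisms, one composes $\Phi$ with a suitable inner automorphism of $\widehat{\pi_1 N}$ to align the chosen basepoints of the two standard trees. The main obstacle I foresee is the SFS-versus-hyperbolic dichotomy at the vertex level: ruling out embeddings of SFS profinite vertex groups into hyperbolic ones needs input beyond the acylindricity machinery of Section \ref{secAcylindrical}, and is likely where the bulk of the serious technical work of this section resides.
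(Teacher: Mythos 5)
Your strategy---transporting each Seifert vertex group to a fixed vertex of the other standard tree and matching stabilisers by running the argument for $\Phi$ and $\Phi^{-1}$---is organised quite differently from the paper's proof, which never moves one tree into the other directly: it characterises the canonical fibre subgroups of major pieces purely inside the profinite completion (``major fibre-like'' subgroups: suitable direct factors of maximal non-pathological $\hat\Z{}^2$'s with non-virtually-abelian centraliser), builds the Seifert graph abstractly from these subgroups and the maximal $\hat\Z{}^2$'s containing two of them (subdividing to recover minor vertices), recovers vertex groups as restricted normalisers via ${\cal N}'_G(Z_v)=G_v$, and only then identifies the standard graphs, checking continuity by Lemma \ref{Gdecomp}. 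Your vertex-level steps are essentially sound: existence of a fixed vertex via Propositions \ref{abelianactingacyl} and \ref{overgroupsfixvertices}, uniqueness because a non-abelian group cannot fix an edge and an edgeless connected profinite graph is a point, and the deferred Seifert-versus-hyperbolic dichotomy is indeed fillable exactly from Lemma \ref{onlycusps} plus the malnormality of Lemma \ref{malnormalcusps} (the centraliser of the fibre would be trapped in a peripheral $\hat\Z{}^2$, though it has index at most $2$ in the major vertex group; the minor case uses its normal $\hat\Z{}^2$ of index $2$). One caveat: your ``preliminary lemma'' is not just malnormality---peripheral subgroups of a Seifert piece intersect in the fibre, which is why the bound is $4$ and why the analogues of Lemmas \ref{orbbdy}, \ref{bdyintersect} and \ref{edgegrps} (stated for the profinite closures, not the discrete groups) are needed.

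The genuine gap is at the edge level. The parenthetical assertion that ``peripheral subgroups of $\widehat{\pi_1 M}$ map under $\Phi$ to peripheral subgroups of $\widehat{\pi_1 N}$'' cannot be assumed: it is essentially the point at issue (an intrinsic characterisation of the JSJ tori in the completion), and it is exactly what the paper's analysis of non-pathological tori, maximality of boundary components (Proposition \ref{bdycptsaremax}) and Lemma \ref{bdyintersect} establishes. What your argument actually shows is only that $\Phi(G_e)\iso\hat\Z{}^2$ fixes both $w_{v_1}$ and $w_{v_2}$, hence their geodesic; Lemma \ref{bdyintersect} (resp.\ Lemma \ref{malnormalcusps}) excludes a major (resp.\ hyperbolic) vertex strictly between them, but not a minor vertex, whose two incident tree-edges have literally equal stabilisers---so adjacency is not yet proved. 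More seriously, ``there is an edge of $Y$ between $\varphi(v_1)$ and $\varphi(v_2)$'' falls short of what the theorem asserts: a graph isomorphism must be a bijection on edges (loops and parallel edges of $X$ must be matched, and the two tree-edges at a minor vertex are not distinguished by their stabilisers), and in the graph-manifold case $\Phi$ must restrict to an isomorphism on every edge group; your sketch constructs no equivariant correspondence of edges at all. This is what the paper's equivariant identification of the standard graphs accomplishes. Finally, the last adjustment cannot be a single inner automorphism of $\widehat{\pi_1 N}$: the vertex groups are carried to varying conjugates of the $H_{\varphi(x)}$, and one needs the change-of-section automorphism of the fundamental group of the graph of groups, as in the paper, rather than conjugation by one element.
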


The proof will consist of an analysis of the $\hat\Z{}^2$-subgroups of $G=\widehat{\pi_1 M}$ together with the normalisers of their cyclic subgroups. We maintain the above notations for the rest of the section. Additionally let $S(\cal G)$ be the standard graph of a graph of profinite groups $(X,{\cal G}_\bullet)$, and let $\zeta:S({\cal G})\to X$ be the projection. Let $Z_v$ denote the canonical fibre subgroup of a vertex stabiliser $G_v$ which is a copy of the profinite completion of some \SFS{} group. By abuse of terminology, we will refer to vertices of $S({\cal G})$ as major, minor or hyperbolic when the corresponding vertex stabiliser is the profinite completion of a major or minor \SFS{} group or a cusped hyperbolic 3-manifold.

We will first show that the action on $S({\cal G})$ is 4-acylindrical.
\begin{rmk}
Wilton and Zalesskii use this fact in their paper \cite{WZ10} for graph manifolds, as well as in \cite{WZ14} and \cite{HWZ12} more generally. Their proof does however contain a gap. Specifically, their version of Lemma \ref{orbbdy} below only allows for conjugating elements $g_i$ in the original group \ofg[O], rather than its profinite completion. There is a similar problem in the hyperbolic pieces, which we deal with in Lemma \ref{malnormalcusps} below.
\end{rmk}
\begin{lem}\label{orbbdy}
Let $O$ be a hyperbolic 2-orbifold, and let $l_1, l_2$ be curves representing components of $\bdy O$. Let $B=\widehat{\ofg[O]}$, and let $C_i$ be the closure in $B$ of $\pi_1 l_i$. Then for $g_i\in B$, either $C_1^{g_1}\cap C_2^{g_2}=1$ or $l_1=l_2$ and $g_2 g_1^{-1}\in C_1$.
\end{lem}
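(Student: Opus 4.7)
The plan is to realize $B = \widehat{\ofg[O]}$ as acting acylindrically on a profinite tree in which $C_1$ and $C_2$ each fix a unique identifiable vertex, and then run a simple stabilizer argument of the sort developed in Section~\ref{secAcylindrical}.

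Since $O$ is a hyperbolic $2$-orbifold with non-empty boundary, $\ofg[O]$ is virtually free and admits a splitting as a free product of cyclic groups (the cone-point groups $\Z/p_i$, together with $\Z$ summands coming from the free part of the presentation). By choosing the presentation appropriately --- essentially, by applying a Nielsen-type change of basis to the free part and by varying which boundary loop plays the role of $e_0$ in Definition~\ref{exauto} --- one can arrange that both $\pi_1 l_1$ and $\pi_1 l_2$ appear, up to conjugacy, as free factors
\[
\ofg[O] \;\cong\; A_1 \ast A_2 \ast \cdots \ast A_k,
\]
with each $A_j$ cyclic and $\pi_1 l_1, \pi_1 l_2$ equal (up to conjugacy) to $A_{j_1}, A_{j_2}$. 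Since each cyclic factor is residually finite, this free product is an efficient graph of groups, so $B \cong \hat A_1 \amalg \cdots \amalg \hat A_k$ acts on the standard profinite tree $T$ of this decomposition with trivial edge stabilizers. In particular the action is $0$-acylindrical, and $C_1, C_2$ are the full stabilizers of unique vertices $v_1, v_2$ of $T$.

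The conclusion is then immediate: suppose $1 \neq x \in C_1^{g_1} \cap C_2^{g_2}$. Then $x$ fixes both $g_1^{-1}\!\cdot\!v_1$ and $g_2^{-1}\!\cdot\!v_2$. By $0$-acylindricity, no non-trivial element fixes more than one vertex, so $g_1^{-1}\!\cdot\!v_1 = g_2^{-1}\!\cdot\!v_2$. Distinct free factors lie in distinct $B$-orbits on the vertex set (the quotient $B\backslash T$ being the underlying graph of the free product), which forces $v_1 = v_2$ and hence $l_1 = l_2$; moreover $g_2 g_1^{-1}$ then stabilizes $v_1$ and so lies in $C_1$.

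The main obstacle lies in arranging the free decomposition of the first paragraph: not every pair of boundary loops can be simultaneously realized as free factors of $\ofg[O]$. The most delicate case is when $O$ has only a few boundary components, for example a once-holed torus, whose unique boundary loop is the commutator $[u,v]$ and is not primitive in $F_2$, or a twice-holed torus, whose two boundaries have opposite images in the abelianization and hence cannot simultaneously belong to a basis. In such exceptional cases, I would pass to a suitable finite-index subgroup of $\ofg[O]$, corresponding to a finite cover $\tilde O \to O$ in which the relevant lifted boundary components \emph{can} be simultaneously extended to a free basis; prove the analogous statement in $\widehat{\pi_1 \tilde O}$; and then transfer the conclusion back to $B$ via Proposition~\ref{overgroupsfixvertices}(i), which promotes vertex-fixing from a closed finite-index subgroup to the ambient profinite group.
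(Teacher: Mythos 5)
Your ``generic'' case is fine, and it is essentially the same argument the paper uses: once both $\pi_1 l_1$ and $\pi_1 l_2$ sit (up to conjugacy) as cyclic free factors, the action of $B$ on the standard profinite tree of the free profinite product has trivial edge stabilisers, and the $0$-acylindrical stabiliser argument gives the conclusion. But note that this direct realisation is only possible when $O$ has at least three boundary components: no boundary curve of a one-boundary orbifold is ever a free factor (its image in the abelianisation is torsion), and two distinct boundary curves of a two-boundary orbifold have inverse images modulo torsion, so they can never be simultaneous free factors. So the ``exceptional cases'' are not a fringe phenomenon; they are where the content of Lemma \ref{orbbdy} lives, and your treatment of them has a genuine gap. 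Passing to a finite-index subgroup $N=\widehat{\pi_1\tilde O}$ and proving the statement there only controls intersections $C_1^{n_1}\cap C_2^{n_2}\cap N$ with conjugators $n_i\in N$. The lemma, however, allows $g_1,g_2$ to range over all of $B$ --- this is exactly the point the paper's Remark identifies as the gap in the Wilton--Zalesskii argument. Writing $g=hn$ with $h$ a coset representative (which one may take in the discrete group $\ofg[O]$) and $n\in N$, the cover handles $n$, but you are left with the residual case $C_2^{h}=C_1=C_2$, where you must show $h\in C_2$; i.e.\ you need that the closure of a peripheral subgroup is sufficiently self-normalising in $B$. In the paper this is supplied by a separability input, Lemma 3.6 of \cite{RZ96} applied to the virtually free group $\ofg[O]$: closures in $B$ of two distinct peripheral subgroups intersect trivially, and peripheral subgroups coincide with their normalisers in $\ofg[O]$, so a discrete element normalising the closure must lie in it. Nothing in your outline provides this ingredient.

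Moreover, the mechanism you propose for the transfer, Proposition \ref{overgroupsfixvertices}(i), cannot do this job. That proposition promotes a fixed vertex from a closed finite-index subgroup $H\leq K$ to $K$ within a single acylindrical action of the ambient group; here $B$ does not act on the standard tree of the free decomposition of $\widehat{\pi_1\tilde O}$ (that decomposition is not $B$-invariant), and the statement to be transferred is not that some overgroup fixes a vertex but a statement about intersections of conjugates with conjugators outside $N$. There is also a smaller elided point: identifying $C_i\cap N$ (and $C_2^{h}\cap N$) with closures of boundary subgroups of $\tilde O$ uses that the profinite topology of $B$ induces the full profinite topology on these subgroups (separability of peripheral subgroups, as quoted in Section \ref{secPrelims}). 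The paper's proof sidesteps your case division by always passing to a regular cover $O'$ with more than two boundary components, running your free-product tree argument inside the corresponding open normal subgroup, and then disposing of the discrete coset representative via the \cite{RZ96} separability lemma; without that last step, or an equivalent, your argument proves the lemma only for orbifolds with at least three boundary components.
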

\begin{proof}
By conjugating by $g_1^{-1}$ we may assume that $g_1=1$; drop the subscript on $g_2=g$. Note that $C_1\cap C_2^g$ is torsion-free, so it is sufficient to pass to a finite index subgroup $N$ and show that $C_1\cap C_2^g \cap N = 1$. 

Because $O$ is hyperbolic, it has some finite-sheeted regular cover $O'$ with more than two boundary components; then given any pair of boundary components, \ofg[O'] has a decomposition as a free product of cyclic groups, among which are the two boundary components. Let $N$ be the corresponding finite index normal subgroup of $B$. Note that for some set $\{h_i\}$ of coset representatives of $N\cap \ofg[O]$ in \ofg[O] (which give coset representatives of $N$ in $B$), each $C_2 ^ {h_i}$ is the closure of the fundamental group of a component of $\bdy O'$; so set $C'_2=C_2^{h_i}$ where $g=h_i n$ for some $n\in N$. Furthermore, if two boundary components of $O$ are covered by the same boundary component $O'$, then they must have been the same boundary component of $O$; that is, if $C_1\cap N=C'_2\cap N$, then $C_1=C'_2$. 

Now the intersections of $C_1, C'_2$ with $N$ are free factors; that is, \[N=(C_1\cap N)\amalg (C'_2\cap N)\amalg F\]
where $F$ is a free product of cyclic groups (unless $C_1=C'_2$, when $N=(C_1\cap N)\amalg F$). Let $T$ be the standard graph for this free product decomposition of $N$. Then $C_1\cap N=N_v, C'_2\cap N = N_w$ for vertices $v,w\in T$. The action on $T$ is 0-acylindrical because all edge stabilisers are trivial; so for $n\in N$, the intersection 
\[C_1\cap {C'}_2^n\cap N= N_v\cap N_{n^{-1}\cdot w}\] can only be non-trivial if $v=n^{-1}\cdot w$, so that $C_1\cap N=C'_2\cap N$ (hence $C_1=C'_2$) and $n\in C_1$.

We have reduced to the case where $C_2^{h_i}=C_1$. The intersection of two distinct peripheral subgroups of $\ofg[O]$ is trivial, and peripheral subgroups coincide with their normalisers in \ofg[O]. But \ofg[O] is virtually free, so by Lemma 3.6 of \cite{RZ96}, the intersection of their closures in $B$ is also trivial, so if $C_2^{h_i}\cap C_2\neq 1$, then $C_2^{h_i}=C_2$, and $h_i$ normalises $C_2$; hence it normalises the intersection with \ofg[O], and so $h_i\in C_2=C_1$ as required.
\end{proof}

\begin{lem}[Proposition 5.4 of \cite{WZ10}]\label{bdyintersect}
Let $L$ be a major \SFS{} with boundary, and let $O$ be its base orbifold. Let $H=\widehat{\pi_1 L}$, and $Z$ be the canonical fibre subgroup of $H$. Let $D_1$, $D_2$ be peripheral subgroups of $H$; that is, conjugates in $H$ of the closure of peripheral subgroups of $\pi_1 L$. Then $D_1\cap D_2=Z$ unless $D_1=D_2$.
\end{lem}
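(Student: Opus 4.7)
The plan is to reduce to Lemma \ref{orbbdy} via the central extension structure of the fundamental group of a major Seifert fibre space. Because $L$ is a major SFS, there is a short exact sequence
\[ 1 \to Z \to H \to B \to 1, \]
where $B = \widehat{\ofg[O]}$ is the profinite completion of a hyperbolic orbifold group; this arises from the corresponding efficient discrete sequence (Hamilton's theorem guarantees that the abelian fibre subgroup $\gp{h}$ is separable in $\pi_1 L$). Write $p \colon H \to B$ for the projection.

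For each boundary component $\ell$ of $L$, the closure $D'_\ell$ in $H$ of $\pi_1 \bdy L_\ell$ fits into a sub-sequence $1 \to Z \to D'_\ell \to C_\ell \to 1$, where $C_\ell$ is the closure in $B$ of the boundary curve subgroup $\pi_1 l_\ell$. A rank comparison (or equivalently, the observation that $\pi_1 \bdy L_\ell$ already contains the full fibre $\gp{h}$) shows that $D'_\ell = p^{-1}(C_\ell)$. A peripheral subgroup $D_i$ of $H$ is, by definition, of the form $(D'_{\ell_i})^{g_i}$ for some $g_i \in H$, and in particular contains $Z$.

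Suppose now that $D_1 \cap D_2 \supsetneq Z$. Projecting through $p$, whose kernel is precisely $Z$, we obtain $p(D_1 \cap D_2) \neq 1$, and hence $C_{\ell_1}^{\bar g_1} \cap C_{\ell_2}^{\bar g_2} \neq 1$ in $B$. Lemma \ref{orbbdy} then forces $\ell_1 = \ell_2 =: \ell$ together with $\bar g_2 \bar g_1^{-1} \in C_\ell$. Lifting this relation gives $g_2 g_1^{-1} \in p^{-1}(C_\ell) = D'_\ell$, so we may write $g_2 = d g_1$ with $d \in D'_\ell$. Using the trivial self-invariance $(D'_\ell)^d = D'_\ell$, we conclude
\[ D_2 = (D'_\ell)^{g_2} = (D'_\ell)^{d g_1} = (D'_\ell)^{g_1} = D_1, \]
completing the proof.

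The chief obstacle is establishing the identification $D'_\ell = p^{-1}(C_\ell)$ cleanly; once this is in place, the remainder is a short diagram chase that reduces immediately to Lemma \ref{orbbdy}. A minor subtlety arises when the base orbifold $O$ is non-orientable, in which case $Z$ is only central in an index-$2$ subgroup of $H$; however, $Z$ remains normal in $H$ (with $B$ possibly acting on it by inversion), so the short exact sequence above persists and the argument above, which never uses centrality of $Z$, goes through unchanged.
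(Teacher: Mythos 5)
Your proof is correct and follows essentially the same route as the paper: the paper's argument is exactly to observe $Z\leq D_1\cap D_2$, pass to the quotient $B=H/Z$, and apply Lemma \ref{orbbdy} to the images $C_1,C_2$. Your write-up merely makes explicit the identification $D'_\ell=p^{-1}(C_\ell)$ and the conjugation bookkeeping that the paper leaves implicit.
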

\begin{proof}
Certainly $Z$ is contained in the intersection $D_1\cap D_2$; if $D_1\cap D_2$ is strictly larger than $Z$, then the images $C_1,C_2$ of $D_1, D_2$ in $B=H/Z$ intersect non-trivially. Hence by the previous lemma $C_1=C_2$, hence $D_1=D_2$.
\end{proof}

\begin{lem}\label{edgegrps}
Let $e=[v,w]$ be an edge of $S({\cal G})$, where $v,w$ are Seifert fibred. Then $Z_v\times Z_w\sbgp[f] G_e$, and so $Z_v\cap Z_w =1$.
\end{lem}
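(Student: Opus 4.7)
The plan is to reduce the lemma to the analogous discrete statement for the torus group $\pi_1 T \iso \Z^2$, where $T$ is the JSJ torus corresponding to the edge $e$, and then profinitely complete.

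First, by Theorem \ref{JSJefficient}, the edge group $\pi_1 T$ is closed in $\pi_1 M$ and $\pi_1 M$ induces the full profinite topology on it; this identifies $G_e$ with $\widehat{\pi_1 T} \iso \hat\Z{}^2$ and shows that the closure in $G_e$ of any subgroup of $\pi_1 T$ is its own profinite completion. Via the injective edge-to-vertex inclusion $G_e \hookrightarrow G_v$, the canonical discrete fibre subgroup of $\pi_1 M_v$---generated by the isotopy class $h_v$ of the regular fibre of $M_v$ viewed as a curve on $T$---has closure in $G_e$ equal to its closure in $G_v$, which is the canonical procyclic fibre subgroup $Z_v \iso \hat\Z$. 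The same identification applies to $Z_w$ and $h_w$.

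The crucial discrete input is the minimality of the JSJ decomposition, recalled in Section \ref{SFSprelims}: the fibre slopes of adjacent Seifert pieces are never isotopic on the common torus, and this holds whether the adjacent pieces are major or minor, since the canonical fibre of a minor piece is also well-defined as a specific slope. Consequently $h_v$ and $h_w$ are non-proportional primitive elements of $\pi_1 T \iso \Z^2$, so $L := \gp{h_v} \oplus \gp{h_w}$ is a rank-$2$ direct summand of finite index in $\Z^2$.

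Finally, the profinite completion functor is exact on finitely generated abelian groups. From the direct-sum decomposition of $L$ one obtains $\hat L = Z_v \times Z_w$ as an internal direct product in $G_e$, and from $L \sbgp[f] \Z^2$ one obtains $\hat L \sbgp[f] \hat\Z{}^2 = G_e$. Combining these gives $Z_v \times Z_w \sbgp[f] G_e$, and in particular $Z_v \cap Z_w = 1$. The only potential pitfall is ensuring that the canonical procyclic subgroups $Z_v, Z_w$ really live as subgroups of $G_e$ (rather than merely of $G_v, G_w$), which is precisely what the efficiency theorem of Wilton--Zalesskii grants us.
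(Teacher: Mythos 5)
Your argument is correct and is essentially the paper's own proof: both reduce (after conjugating so that $G_e$ is the closure of a peripheral $\Z^2$ in $\pi_1 M$) to the discrete fact, recalled in Section \ref{SFSprelims}, that the fibre classes of adjacent Seifert pieces are distinct primitive slopes on the JSJ torus, so their span is a finite-index $\Z\oplus\Z$ in the torus group, and then pass to profinite completions via the efficiency theorem together with $\widehat{H_1\times H_2}\iso \hat H_1\times\hat H_2$. The only cosmetic difference is that the paper makes the initial conjugation explicit while you leave it implicit in the phrase ``the JSJ torus corresponding to $e$''.
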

\begin{proof}
After a conjugation in $G$, we may assume that $e$ is an edge in the standard graph of the abstract fundamental group $\pi_1 M$, i.e. $G_e$ is the closure in $G$ of a peripheral subgroup of some $\pi_1 M_v$. Because $\widehat{H_1\times H_2}\iso \hat H_1\times \hat H_2$ for groups $H_1, H_2$, the result now follows from the corresponding result in the fundamental group $\pi_1 M$; the canonical fibre subgroups are distinct direct factors of the edge group, which is therefore contains their product as a finite-index subgroup.
\end{proof}

\begin{lem}\label{malnormalcusps}
Let $L$ be a hyperbolic 3-manifold with toroidal boundary. Let $H=\widehat{\pi_1 L}$, and let $D_1$, $D_2$ be peripheral subgroups of $L$; that is, conjugates in $H$ of the closure of maximal peripheral subgroups of $\pi_1 L$. Then $D_1\cap D_2=1$ unless $D_1=D_2$ and, moreover, each $D_i$ is malnormal in $H$.
\end{lem}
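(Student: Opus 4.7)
The plan is to reduce both assertions to the malnormality of a single peripheral closure, and then establish that malnormality by combining the classical discrete malnormality of cusps with the strong separability properties enjoyed by cusped hyperbolic $3$-manifold groups.

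\textbf{Reduction.} After conjugating inside $H$ I may assume $D_1=\overline{\pi_1 T_1}$ and $D_2=g\,\overline{\pi_1 T_2}\,g^{-1}$ for $g\in H$ and boundary tori $T_1,T_2$ of $L$. Classically, cusp subgroups of a Kleinian group are malnormal: a parabolic element fixes a unique point at infinity of $\mathbb{H}^3$, so any non-trivial element of $\pi_1 T_1\cap\gamma\,\pi_1 T_2\,\gamma^{-1}$ forces the corresponding horocentres to coincide and hence forces $\gamma\pi_1 T_2\gamma^{-1}=\pi_1 T_1$; combined with the fact that distinct boundary components yield non-conjugate maximal peripherals in $\pi_1 L$, this reduces both claims to the following: for any single boundary torus $T$ of $L$ with $P=\pi_1 T$, the closure $\bar P$ is malnormal in $H$.

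\textbf{Main argument.} For the remaining malnormality I would invoke Agol's theorem that $\pi_1 L$ is virtually compact special, hence LERF, together with Hamilton's separability of abelian subgroups (quoted above). These imply that $\pi_1 L$ induces the full profinite topology on $P$, so that $\bar P\iso\widehat{\pi_1 T}\iso\hat\Z{}^{\!2}$, and also that the double cosets $PgP$ are closed in the profinite topology on $\pi_1 L$. Assume for contradiction that $g\in H\smallsetminus\bar P$ and that there is a non-trivial element $x\in\bar P\cap g\bar Pg^{-1}$. Approximating $g$ by $g_n\in\pi_1 L$ (with $g_n\notin P$ for large $n$, by separability of $P$) and $x$ by $x_n\in P$, continuity of conjugation together with closedness of $\bar P$ would force the conjugates $g_n^{-1}x_n g_n$ to lie, modulo arbitrarily deep finite-index normal subgroups, in $P$ itself, contradicting the discrete malnormality $P\cap g_n Pg_n^{-1}=1$.

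\textbf{Main obstacle.} The delicate point is that malnormality is not in general preserved by passage to finite quotients: the image of a malnormal subgroup need not be malnormal. To make the separability argument above go through, one needs a structural reason, inside $\pi_1 L$ or a finite cover, for the intersection $g_n Pg_n^{-1}\cap P$ to be detected by finite quotients. The natural way to supply this is to pass to a finite cover $L'\to L$ in which a lift of $T$ sits as an edge stabiliser of an efficient graph-of-groups splitting with sufficiently simple adjacent edge groups --- such splittings exist by virtual specialness (Agol) together with classical existence of essential surfaces with prescribed boundary on cusps --- so that the profinite Bass--Serre theory of Sections \ref{secgrpsacting}--\ref{secAcylindrical} applies directly to $\widehat{\pi_1 L'}$. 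Malnormality in $\widehat{\pi_1 L'}$, which is a finite-index subgroup of $H$, can then be promoted to malnormality of $\bar P$ in $H$ using Proposition \ref{overgroupsfixvertices}.
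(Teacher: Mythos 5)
There is a genuine gap, and it sits exactly where you flag the ``main obstacle''. Your central separability argument does not close: approximating $g$ and $x$ by $g_n,x_n\in\pi_1 L$ only shows that $g_n^{-1}x_ng_n$ lies in $PN$ for deeper and deeper $N\nsgp[f]\pi_1 L$, i.e.\ in the closure of $P$; it never produces a single discrete conjugator violating malnormality of $P$ in $\pi_1 L$, because the elements change with $N$. Double-coset separability of $PgP$ is likewise not enough — this is precisely the sense in which malnormality is not a profinitely detectable property, as you yourself note. Your proposed repair is also not sound as stated: realising (a finite-index subgroup of) $P$ as an edge group of an efficient splitting of a finite cover does not by itself give malnormality of its closure — that would require acylindricity-type control of the profinite action on the standard graph, which for hyperbolic pieces is exactly what Lemma \ref{malnormalcusps} is needed to establish, so the argument is circular; moreover malnormality does not pass from a finite-index subgroup $\widehat{\pi_1 L'}\sbgp[f] H$ to $H$ (conjugators outside the subgroup are untouched), and Proposition \ref{overgroupsfixvertices} is about fixed vertices of acylindrical actions, not about promoting malnormality. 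Finally, your ``reduction'' is incorrect: the claim $D_1\cap D_2=1$ for closures of \emph{distinct} cusps conjugated by arbitrary $g\in H$ is a statement about the whole peripheral family in the profinite completion, and it does not follow from malnormality of each single $\bar P$ together with the discrete (horocentre) argument, which only controls conjugators in $\pi_1 L$.

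The missing idea, and the route the paper takes, is Dehn filling: by Thurston's hyperbolic Dehn surgery theorem \cite{BP12} one chooses slopes on all cusps so that the filled manifold is closed hyperbolic, with enough freedom in the choice to ensure that a putative nontrivial intersection $\hat P_i\cap\hat P_j^{\,g}$ survives as an \emph{infinite} subgroup of the quotient of $H$ by the closed normal closure of the slopes (intersected with the kernel of an auxiliary finite quotient detecting that $g\notin\hat P_i$). The resulting discrete quotient of $\pi_1 L$ is a hyperbolic virtually special group in which the images of the $P_k$ form an almost malnormal family, and Corollary 3.2 of \cite{WZ14} then says their closures form an almost malnormal family in the profinite completion of that quotient — contradicting the infinite image of $\hat P_i\cap\hat P_j^{\,g}$. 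In other words, instead of trying to read malnormality off arbitrary finite quotients (which fails), one passes to carefully chosen word-hyperbolic quotients where a genuine promotion theorem for almost malnormal families is available.
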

\begin{proof}
Choose a basepoint for $L$ and let $P_1,\ldots, P_n$ be the fundamental groups of the boundary components of $L$ with this basepoint. These form a malnormal family of subgroups of $\pi_1 L$ and we wish to show that their closures are a malnormal family of subgroups of $H$. Suppose $g\in H$ is such that $\hat P_i\cap \hat P_j{}^g \neq 1$ and suppose that $i\neq j$ or $g\notin \hat P_i$. In the latter case let $q:\pi_1 L\to Q$ be a map to a finite group such that under its extension $\bar q:H\to Q$ to $H$, the image of $g$ does not lie in the image of $\hat P_i$; otherwise take $q$ to be the map to the trivial group. 

By Thurston's hyperbolic Dehn surgery theorem (see \cite{BP12}) we may choose slopes $p_k$ on the $P_k$ such that Dehn filling along each slope gives a closed hyperbolic 3-manifold $N$; moreover, we may choose such slopes with enough freedom to ensure that the image of $\hat P_i\cap \hat P_j{}^g$ is infinite in $H / \overline{{\ll}p_1,\ldots,p_n{\gg}}$. The images of the $P_k$ in this hyperbolic 3-manifold group $\pi_1 N$ are a malnormal family of subgroups. 

Now consider $K_0 = {\rm ker} (q)\cap {\ll}p_1,\ldots,p_n{\gg}$ and its closure in $H$. Note that $\pi_1 L/K_0$ is a hyperbolic virtually special group; indeed on the finite-index subgroup ${\rm ker}(q)$ we have just Dehn filled to get a closed hyperbolic manifold. Furthermore the images of the $P_k$ in $\pi_1 L/K_0$ are an almost malnormal family of subgroups. Then by Corollary 3.2 of \cite{WZ14}, their closures (i.e{.} the images of the $\hat P_k$ in $H/\bar K_0$) are an almost malnormal family of subgroups of $H/\bar K_0$. But since the maps to $\widehat{\pi_1 N}$ and the map $\bar q$ both factor through the map $H\to H/\bar K_0$, we have that the images of $\hat P_i$ and $\hat P_j^g$ intersect in an infinite subgroup, but $i\neq j$ and the image of $g$ does not lie in the image of $\hat P_i$. This contradiction completes the proof.
\end{proof}

\begin{lem}\label{onlycusps}
Let $L$ be a hyperbolic 3-manifold with toroidal boundary. Let $H=\widehat{\pi_1 L}$ and let $A$ be a subgroup of $H$ isomorphic to $\hat\Z{}^2$. Then $A$ is conjugate into a peripheral subgroup of $H$.
\end{lem}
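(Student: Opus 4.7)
\medskip

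\noindent\textbf{Proof plan.} My approach splits cleanly into a soft reduction and a harder production step. For the reduction, suppose we have managed to find a single nonidentity element $a\in A$ conjugate into a peripheral subgroup, say $a\in \hat P_i$ after replacing $A$ by a conjugate. Then for any $b\in A$ we have $bab^{-1}=a$ by abelianity, so $a\in \hat P_i\cap b\hat P_i b^{-1}$. Since $a\neq 1$, the malnormality of $\hat P_i$ in $H$ provided by Lemma \ref{malnormalcusps} forces $b\in \hat P_i$. Hence $A\subseteq \hat P_i$. So the whole content of the lemma is that \emph{some} nonidentity element of $A$ is conjugate into a peripheral subgroup.

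\medskip

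For the production step I would mimic the Dehn filling strategy used in the proof of Lemma \ref{malnormalcusps}. By Thurston's hyperbolic Dehn filling theorem, I can choose slopes $p_1,\ldots,p_n$ on the boundary tori of $L$ so that the filled manifold $N$ is closed and hyperbolic; then $\pi_1 N$ is hyperbolic and virtually compact special by the theorems of Agol and Wise. Let $\bar K_0$ be the closure in $H$ of the normal closure of the slopes, so that $H/\bar K_0\cong \widehat{\pi_1 N}$. The input I would then invoke is that $\widehat{\pi_1 N}$ contains no copy of $\hat\Z{}^2$: this should follow from the word-hyperbolicity of $\pi_1 N$ together with the good profinite behaviour of virtually compact special groups, in the same spirit as the use of Corollary 3.2 of \cite{WZ14} in Lemma \ref{malnormalcusps}. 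Granted this, the image of $A$ in $\widehat{\pi_1 N}$ is a proper quotient of $\hat\Z{}^2$, hence cyclic or finite, so $A\cap \bar K_0$ contains a copy of $\hat\Z$ and in particular a nonidentity element $a$.

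\medskip

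\noindent\textbf{Main obstacle.} The serious difficulty is that $\bar K_0$, the closed normal closure of the peripheral slopes, is much larger than the union of the peripheral conjugates, so membership $a\in \bar K_0$ does not immediately yield $a\in\hat P_i^{g}$ for some $i,g$. To bridge this gap I would attempt one of three routes: (i) perform multiple fillings along independent families of slopes and intersect the resulting kernels inside $A$, leveraging the fact that independent choices can be made freely by Thurston's theorem; (ii) invoke a profinite analogue of relatively hyperbolic Dehn filling in the style of Groves--Manning or Osin, which would guarantee that sufficiently long fillings have kernels intersecting $\hat\Z{}^2$ subgroups only in peripheral directions; or (iii) appeal to duality by using the goodness of $\pi_1 L$ together with Poincar\'e--Lefschetz duality for the pair $(L,\partial L)$ to constrain $H^2(A,\mathbb{F}_p)$ and so force $A$ to be commensurable with a peripheral. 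Route (ii) seems the most natural fit for the existing machinery, but verifying the precise profinite statement needed from the literature (or establishing it from scratch via virtual specialness) is where I expect the real work to lie.
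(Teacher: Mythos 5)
Your reduction step is fine: given one nonidentity $a\in A$ lying in a peripheral subgroup $\hat P_i$, commutativity gives $a\in \hat P_i\cap \hat P_i^{\,b}$ for every $b\in A$, and the malnormality from Lemma \ref{malnormalcusps} then forces $A\leq \hat P_i$. But the lemma's entire content is the production step, and there you have not given a proof — you have named the obstacle yourself. Knowing that $A\cap\bar K_0\neq 1$, where $\bar K_0$ is the closed normal closure of the filling slopes, says nothing about conjugacy into a peripheral subgroup, since $\bar K_0$ is vastly larger than the union of the peripheral conjugates. None of your three proposed bridges is carried out: route (i) gives no mechanism by which intersecting filling kernels inside $A$ pins the intersection to peripheral directions; route (ii) appeals to a profinite analogue of relatively hyperbolic Dehn filling (Groves--Manning/Osin) that is not available in the literature in the form you would need, so invoking it is not a proof; and route (iii) is only a suggestion. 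In addition, even the input you take for granted in the production step — that $\widehat{\pi_1 N}$ for $N$ closed hyperbolic contains no copy of $\hat\Z{}^2$ — is itself a nontrivial profinite statement that you leave unverified ("this should follow''), and your intermediate claim that a proper quotient of $\hat\Z{}^2$ is "cyclic or finite'' is not literally correct as stated (though the weaker conclusion $A\cap\bar K_0\neq1$ would suffice for your reduction, if you could upgrade it to peripherality, which you cannot).

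For comparison, the paper does not reprove this fact at all: it cites the proof of Theorem 9.3 of \cite{WZ14}, where the conjugation of $\hat\Z{}^2$-subgroups into cusp subgroups is established; the techniques there are of the cohomological/goodness flavour closest to your route (iii), not a profinite Dehn-filling argument. So the honest assessment is that your write-up correctly isolates where the difficulty lies but does not close it; as it stands the key claim — that some nonidentity element of $A$ is conjugate into a peripheral subgroup — remains unproven.
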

\begin{proof}
This follows from the proof of Theorem 9.3 of \cite{WZ14}.
\end{proof}
\begin{prop}\label{JSJacyl}
The action of $G=\widehat{\pi_1 M}$ on the standard graph $S(\cal G)$ is 4-acylindrical.
\end{prop}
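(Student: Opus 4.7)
The plan is to take an injective path $p = (v_0, e_1, v_1, \ldots, e_n, v_n)$ of length $n \ge 5$ in $S({\cal G})$ and an element $g$ stabilising $p$, and then to prove $g = 1$ by a case analysis on the types of the (at least four) interior vertices $v_1, v_2, v_3, v_4$. Because the path is injective, at each interior $v_i$ the edges $e_i$ and $e_{i+1}$ are distinct. Self-normalisation of peripheral subgroups in hyperbolic pieces (immediate from the malnormality in Lemma \ref{malnormalcusps}) and in major Seifert fibred pieces (obtained by pushing Lemma \ref{orbbdy} through the quotient $G_{v_i}/Z_{v_i}$) shows that whenever $v_i$ is hyperbolic or major, $G_{e_i}$ and $G_{e_{i+1}}$ are distinct peripheral subgroups of $G_{v_i}$. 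By contrast, when $v_i$ is minor the peripheral subgroup $\hat P_{v_i}$ is normal of index two in $G_{v_i}$, so the two edge stabilisers both coincide with $\hat P_{v_i}\cong\hat\Z{}^2$.

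The first easy case is when some interior $v_i$ is hyperbolic: the two distinct peripheral stabilisers intersect trivially by malnormality, forcing $g = 1$. The second is when two adjacent interior vertices $v_i, v_{i+1}$ are both major, for then Lemma \ref{bdyintersect} gives $g \in Z_{v_i}\cap Z_{v_{i+1}}$, and Lemma \ref{edgegrps} shows this intersection is trivial inside the edge group $G_{e_{i+1}}$. Because two minor vertices cannot be adjacent in the JSJ graph, the only remaining configurations for the four interior vertices are the strict alternations $\mathrm{mMmM}$ and $\mathrm{MmMm}$; in both, two interior major vertices $v_j, v_{j+2}$ are separated by a single interior minor $v_{j+1}$.

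The main obstacle is this alternating case. The key structural fact is that $v_{j+1}$ has exactly two edges of $S({\cal G})$ incident to it, both lifts of the unique edge of $X$ at $\zeta(v_{j+1})$, and these two edges lie in a single $G_{v_{j+1}}$-orbit because $[G_{v_{j+1}}:\hat P_{v_{j+1}}]=2$. Hence there exists $t \in G_{v_{j+1}} \smallsetminus \hat P_{v_{j+1}}$ with $t\cdot v_j = v_{j+2}$, whence $Z_{v_{j+2}} = t Z_{v_j} t^{-1}$. Conjugation by $t$ induces an involution $\tau$ of $\hat P_{v_{j+1}}\cong\hat\Z{}^2$ which, in the natural basis $\{a, b^2\}$ of the Klein bottle peripheral $\langle a, b^2\mid[a,b^2]\rangle$ (obtained from the relation $bab^{-1}=a^{-1}$), takes the form $\tau(x, y) = (-x, y)$. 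The fibre $Z_{v_j}$ is topologically generated by the primitive integer slope $(a_j, b_j)\in\Z^2$ of a regular fibre of the major piece $v_j$ on the shared torus; minimality of the JSJ decomposition forbids this slope from coinciding (up to sign) with either of the two fibre slopes $(1, 0)$ and $(0, 1)$ of the minor piece, so $a_j\ne 0$ and $b_j\ne 0$. An element of $Z_{v_j}\cap\tau(Z_{v_j})$ has the form $\lambda(a_j, b_j) = \mu(-a_j, b_j)$ for some $\lambda, \mu\in\hat\Z$, yielding $(\lambda+\mu)a_j = 0$ and $(\lambda-\mu)b_j = 0$. Since nonzero integers are not zero-divisors in $\hat\Z$ by Lemma \ref{nonzerodiv}, we obtain $\lambda+\mu = 0 = \lambda-\mu$, hence $2\lambda = 0$, and torsion-freeness of $\hat\Z$ forces $\lambda = 0$. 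Thus $Z_{v_j}\cap Z_{v_{j+2}} = 1$ and $g = 1$, completing the argument; the stabiliser of any longer injective path is contained in that of this length-$5$ subpath, so the action is $4$-acylindrical.
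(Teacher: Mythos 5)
Your proof is correct, and its skeleton is the paper's: a case analysis on the interior vertices in which a hyperbolic vertex kills the stabiliser via Lemma \ref{malnormalcusps}, two adjacent major vertices kill it via Lemmas \ref{bdyintersect} and \ref{edgegrps}, and the only remaining configuration is major--minor--major, which you attack (exactly as the paper does in its Case 2) by using an element $t\in G_{v_{j+1}}\smallsetminus\hat P_{v_{j+1}}$ swapping the two edges at the minor vertex, so that $Z_{v_{j+2}}=tZ_{v_j}t^{-1}$. Where you genuinely diverge is in how the triviality of $Z_{v_j}\cap Z_{v_{j+2}}$ is then established: the paper argues structurally that the fourfold intersection of edge groups is normal in the minor vertex group and, being an intersection of two direct factors of the edge group, is either trivial or a maximal procyclic subgroup, hence trivial or one of the two fibre subgroups of the minor piece, the latter being excluded because the fibres of adjacent pieces do not match; you instead work in the explicit basis $\{a,b^2\}$ of the Klein-bottle peripheral $\hat\Z{}^2$, observe that conjugation by $t$ acts as $(x,y)\mapsto(-x,y)$, use JSJ minimality only to see that the major fibre slope $(a_j,b_j)$ has both coordinates nonzero, and finish by the zero-divisor argument of Lemma \ref{nonzerodiv}. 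Your version buys a completely elementary, coordinate-level verification that sidesteps the paper's slightly delicate claims about normal procyclic subgroups and intersections of direct factors of $\hat\Z{}^2$, at the cost of fixing an identification of the minor vertex group; the paper's version is shorter and coordinate-free. A further small merit of your write-up is that you justify explicitly, via self-normalisation of peripheral subgroups (malnormality in the hyperbolic case, Lemma \ref{orbbdy} pushed through $G_{v}/Z_{v}$ in the major case), why the two path edges at such a vertex have distinct stabilisers, a point the paper uses implicitly.
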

\begin{proof}
Take a path of length 5 consisting of edges $e_0, \ldots, e_4$ joining vertices $v_0,\ldots, v_5$. Let $M_i$ be the manifold $M_{\zeta(v_i)}$, where $\zeta:S({\cal G})\to X$ is the projection. If any of $M_1,\ldots, M_4$ is hyperbolic then the intersection of the two adjacent edge groups is trivial by Lemma \ref{malnormalcusps}. So assume all these $M_i$ are \SFS{}s. There are three cases to consider:
\begin{enumerate}[{Case} 1]
\item Suppose both $M_1$, $M_2$ are major \SFS{}s. Then by Lemma \ref{bdyintersect}, $G_{e_0}\cap G_{e_1}=Z_{v_1}$ and $G_{e_1}\cap G_{e_2}=Z_{v_2}$; but $Z_{v_1}\cap Z_{v_2}$ is trivial. So $\bigcap_{i=0}^2 G_{e_i}$ is trivial.
\item Suppose $M_1$ is a major \SFS{} and $M_2$ is a minor \SFS{}. Let $g$ be an element of $G_{v_2}\smallsetminus G_{e_1}$. Then $v_3=g\cdot v_1$ is major and $Z_{v_3} = Z_{v_1}^{g^{-1}}$. Then acting by $g$ sends $e_1$ to $e_2$ and fixes $v_1$; hence the intersection of all four edge groups, $Z_{v_1}\cap G_{e_2} \cap Z_{v_3}$, is a normal subgroup of $G_2$. Moreover, as the intersection of two direct factors of $G_{e_2}$, it is trivial or a maximal copy of $\hat\Z$ in $G_{e_2}$. Hence it is either trivial or is one of the two fibre subgroups of $G_{v_2}$. But the latter case is ruled out as neither of these fibre subgroups intersects $Z_{v_1}$ or $Z_{v_3}$ non-trivially. 
\item Suppose $v_1$ is minor. Then $v_2$ is major. If $v_3$ is major, then the argument of Case 1 applies. If $v_3$ is minor, then $v_4$ is major and relabelling $i\mapsto 5-i$ we are back in Case 2.\qedhere
\end{enumerate}
\end{proof}

\begin{clly}\label{JSJabel}
Let $(X,M_\bullet)$ be the JSJ decomposition of a graph manifold $M$, and let $G_\bullet = \widehat{\pi_1 M_\bullet}$. Let $A$ be an abelian subgroup of $G=\widehat{\pi_1 M}$. Then $A\iso\Z[\pi]$ for some set of primes or $A$ fixes some vertex of $S(\cal G)$. 
\end{clly}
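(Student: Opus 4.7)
The plan is to combine the two main results just established. By construction (see Section \ref{secGofGs}), the standard graph $S({\cal G})$ is a profinite tree, and Proposition \ref{JSJacyl} shows that $G = \widehat{\pi_1 M}$ acts on it $4$-acylindrically. Proposition \ref{abelianactingacyl} is then tailor-made: any closed abelian subgroup of a profinite group acting acylindrically on a profinite tree either fixes a vertex or is isomorphic to $\Z[\pi]$ for some set of primes $\pi$. Applied to our $A$, this yields the corollary at once.

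The only mild subtlety to bear in mind is that Proposition \ref{abelianactingacyl} is stated for \emph{closed} abelian subgroups. If the corollary is to be read as allowing non-closed $A$, one simply replaces $A$ by its closure $\bar A$, which is again abelian; if $\bar A$ fixes a vertex, so does $A$, and otherwise $\bar A \iso \Z[\pi]$ with $A$ a (dense) subgroup thereof. In the intended applications $A$ is closed, so the dichotomy transfers verbatim.

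The main work for this result has in fact already been carried out upstream --- the delicate case analysis yielding $4$-acylindricity, which uses the malnormality of peripheral subgroups in hyperbolic pieces (Lemma \ref{malnormalcusps}), the boundary intersection control in major Seifert pieces (Lemma \ref{bdyintersect}), and the incompatibility of fibre subgroups of adjacent Seifert pieces (Lemma \ref{edgegrps}). With acylindricity in hand, there is essentially no further obstacle; the proof amounts to quoting the two propositions in sequence.
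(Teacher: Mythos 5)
Your proof is correct and is exactly the paper's argument: the paper's proof of this corollary consists of the single line ``Apply Proposition \ref{abelianactingacyl}'', with the $4$-acylindricity of the action from Proposition \ref{JSJacyl} supplying the hypothesis. Your extra remark about passing to the closure of $A$ is a harmless clarification and does not change the route.
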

\begin{proof}
Apply Proposition \ref{abelianactingacyl}.
\end{proof}

Having located the $\hat \Z{}^2$ subgroups, we proceed to distinguish those making up the edge groups between Seifert fibred pieces from those `internal' to the vertex group in which they are contained or adjacent to a hyperbolic piece. This will be accomplished using the normalisers and centralisers of cyclic subgroups. Recall that the centraliser of a subgroup $H\leq G$ is denoted ${\cal Z}_G(H)$, and that ${\cal N}'_G(C)$ denotes the restricted normaliser of a procyclic subgroup as defined in Definition \ref{restnormdef}.

\begin{defn}
A {\em non-pathological torus} in $G$ is a copy $A\leq G$ of $\hat\Z{}^2$, not contained in any larger copy of $\hat\Z{}^2$, with the following property. For every conjugate $A^g$ of $A$ in $G$, either $A\cap A^g=1$, $A\cap A^g$ is a subgroup of $\hat\Z$, or $A=A^g$.
\end{defn}
\begin{defn}
A procyclic subgroup $C\iso\hat\Z$ of $G$ is {\em major fibre-like} if:
\begin{itemize}
\item $C$ is a direct factor of some non-pathological torus of $G$;
\item ${\cal Z}_G(C)$ is not virtually abelian; and
\item $C$ is maximal with these properties.
\end{itemize} 
\end{defn}

The following result can be deduced from Lemma \ref{bdyintersect}; however the following proof, being much more elementary, merits inclusion.
\begin{prop}\label{bdycptsaremax}
Let $O$ be a hyperbolic 2-orbifold, and let $c$ be an element of $\ofg(O)$ representing a boundary component of $O$. Let $B=\widehat{\ofg[O]}$. Then the closed subgroup $C\leq\widehat{\ofg[O]}$ generated by $c$ is not contained in any strictly larger $\hat\Z$-subgroup of $B$. Hence any abelian subgroup of $B$ containing $C$ is $C$ itself.
\end{prop}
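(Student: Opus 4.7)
My plan is to deduce the proposition directly from Lemma \ref{orbbdy}, which was established earlier and describes exactly how the closures of peripheral subgroups intersect after conjugation.

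First I would observe that the second statement subsumes the first: a procyclic overgroup $D \cong \hat\Z$ of $C$ is automatically abelian, so if we prove that every abelian subgroup $A \leq B$ containing $C$ equals $C$, then $D = C$ follows. So I would work in the setting of an arbitrary closed abelian subgroup $A$ containing $C$.

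The key observation is that if $a \in A$, then $a$ commutes with $c$, and hence with every element of $C = \overline{\langle c \rangle}$. In particular, conjugation by $a$ preserves $C$: we have $a^{-1} C a = C$, and so
\[
C \cap C^a = C \neq 1.
\]
Now apply Lemma \ref{orbbdy} with $l_1 = l_2$ equal to the boundary component represented by $c$, conjugating elements $g_1 = 1$ and $g_2 = a$, and $C_1 = C_2 = C$. The first alternative of the lemma's conclusion is excluded, so the second must hold, giving $g_2 g_1^{-1} = a \in C_1 = C$. Since $a \in A$ was arbitrary, $A \subseteq C$, and combined with the assumption $C \subseteq A$ we obtain $A = C$, as required.

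There is no substantive obstacle: the hyperbolicity of $O$ enters only insofar as it is needed to invoke Lemma \ref{orbbdy}. The whole argument reduces to the observation that $A$ abelian forces $a$ to normalise $C$, which feeds directly into the self-intersection hypothesis of Lemma \ref{orbbdy}. In particular, no separate treatment is needed for the case when $c$ represents the ``distinguished'' boundary component $e_0$ in the presentation of Definition \ref{exauto}, since Lemma \ref{orbbdy} handles all boundary components uniformly.
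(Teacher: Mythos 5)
Your argument is correct, and it is a genuinely different route from the paper's. The reduction is sound: for $a$ in an abelian overgroup $A\supseteq C$ you get $C\cap C^a=C\neq 1$ (note $C\neq 1$ since $c$ has infinite order and \ofg[O] is residually finite), so Lemma \ref{orbbdy} with $l_1=l_2$, $g_1=1$, $g_2=a$ forces $a\in C$; and the $\hat\Z$-maximality statement is indeed a special case of the abelian statement, so proving the latter first is legitimate, and there is no circularity since Lemma \ref{orbbdy} is established earlier and independently. The paper, however, deliberately proves Proposition \ref{bdycptsaremax} the other way round and by elementary means: a strictly larger copy of $\hat\Z$ would contain $C$ with index some prime $p$, so it suffices to show $c$ is not a $p$-th power, which is verified by exhibiting explicit finite quotients (mod-$p$ and mod-$4$ Heisenberg groups and small cyclic quotients, with a case analysis on the topological type of $O$) in which the image of $c$ is not a $p$-th power; the abelian statement is then deduced from the fact that maximal abelian subgroups of the free profinite product $B$ are procyclic. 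The paper even remarks that the result "can be deduced from Lemma \ref{bdyintersect}" (essentially your deduction, one step removed) but includes the elementary proof because it avoids the machinery behind Lemma \ref{orbbdy} (passage to finite covers, acylindricity of actions on profinite trees, Lemma 3.6 of \cite{RZ96}), and because its witnesses are explicit $p$-group quotients, which is precisely the form of argument that transfers directly to the pro-$p$ setting used in Section \ref{secproP}. So your proof is a shorter, valid alternative, bought at the price of resting on a heavier lemma; the paper's proof is longer but self-contained and pro-$p$ ready.
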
 
\begin{proof}
Note that the quotient of $\hat \Z$ by any proper subgroup $C\iso\hat\Z$ is a direct product of finite groups, at least one of which is non-trivial; it follows that $C$ is contained in some sub-$\hat\Z$ with index a prime $p$. Hence it suffices to show that $c$ cannot be written as a $p^{\rm th}$ power $x^p$. If $c$ has this property in some quotient of $B$, then it also has this property in $B$ itself. Thus it suffices to find, for each $p$, a finite quotient of \ofg[O] in which the image of $c$ is not a $p^{\rm th}$ power. We split into cases based on the topological type of $O$. Furthermore, by passing to a quotient, it suffices to deal with the cases where $c$ is the only boundary component of $O$.
\begin{enumerate}[{Case} 1]
\item Suppose first that $O$ either is orientable of genus at least 1 or that it is non-orientable with at least three projective plane summands; so that $O$ has a punctured torus as a boundary-connected summand $O=({\mathbb T}^2\smallsetminus\ast)\#_\bdy O'$ , and on passing to a quotient we may assume that $O$ is a once-punctured torus, and $c=[a,b]$ where $a,b$ are free generators of the free group \ofg[O].

Suppose $p\neq 2$. Consider the mod-$p$ Heisenberg group 
\[H_p=\left\{ \begin{pmatrix}1 & \ast&\ast \\ 0 &1&\ast\\0&0&1\end{pmatrix} \right\} \leq {\rm SL}_3(\Z/p)\]
and map to it by \[a\mapsto \begin{pmatrix}1 & 1&0 \\ 0 &1&0\\0&0&1\end{pmatrix}, b\mapsto \begin{pmatrix}1 & 0&0 \\ 0 &1&1\\0&0&1\end{pmatrix}\] so that $c=[a,b]$ maps to  \[\begin{pmatrix}1 & 0&1 \\ 0 &1&0\\0&0&1\end{pmatrix}\]
Now in the Heisenberg group, the formula for an $n^{\rm th}$ power is
\[  \begin{pmatrix}1 & x&z \\ 0 &1&y\\0&0&1\end{pmatrix}^n = \begin{pmatrix}1 & nx&nz+ (1+2+\cdots +(n-1))xy \\ 0 &1&ny\\0&0&1\end{pmatrix} \]
so that in particular all $p^{\rm th}$ powers vanish, noting that $p$ is odd so divides $1+\cdots+(p-1)$. So the image of $c$ cannot possibly be a $p^{\rm th}$ power.

If $p=2$, instead map to the mod-4 Heisenberg group $H_4$ by the same formulae; then all squares have the form 
\[  \begin{pmatrix}1 & x&z \\ 0 &1&y\\0&0&1\end{pmatrix}^2 = \begin{pmatrix}1 & 2x&2z+ xy \\ 0 &1&2y\\0&0&1\end{pmatrix} \]
If this were to equal to image of $c$, then $x$ and $y$ would have to be even, so that $2x+xy$ would be even, and therefore not 1, a contradiction. So $c$ cannot be a square in $B$ either.

\item If $O$ is a punctured Klein bottle, with possibly some cone points, then after factoring out the cone points we have that \ofg[O] is a free group on two generators $a,b$ with $c=a^2b^2$. If $p\neq 2$ simply map to $\Z/p$ by $a\mapsto 1, b\mapsto 1$, so that $c\mapsto 4$ is not a multiple of $p$. If $p=2$ then map to the mod-4 Heisenberg group $H_4$ as above; this time the image of $c$ is
    \[\begin{pmatrix}1 & 2&0 \\ 0 &1&2\\0&0&1\end{pmatrix}\]
  If this were to be a square
  \[  \begin{pmatrix}1 & x&z \\ 0 &1&y\\0&0&1\end{pmatrix}^2 = \begin{pmatrix}1 & 2x&2z+ xy \\ 0 &1&2y\\0&0&1\end{pmatrix} =\begin{pmatrix}1 & 2&0 \\ 0 &1&2\\0&0&1\end{pmatrix}\]
  then $x,y$ would be odd, hence so would be $2z+xy$ which is thus non-zero. So the image of $c$ is not a square.
  
\item The remaining cases are either discs with at least two cone points or M\"obius bands with at least one cone point. After factoring out any excess cone points, we may assume \ofg[O] is either $(\Z/m)\ast(\Z/n)$ or $(\Z/m)\ast\Z$, and $c$ is expressed either as $ab$ or $ab^2$ in these generators. Consider the kernels $K$ of the maps to $(\Z/m)\times(\Z/n)$ or $\Z/m$ respectively. Some power $c^k$ of $c$ is then a boundary component of the cover corresponding to $K$, which is torsion-free hence yields one of the `high genus' cases above; hence $c^k$ is not a $p^{\rm th}$ power in $K$. If $p$ is coprime to $m,n$ then $x^p\in K$ if and only if $x\in K$, so $c^k$ is not a $p^{\rm th}$ power in $B$ either (hence neither is $c$ itself). Finally if $p$ divides one of $m$, $n$ (without loss of generality, if $p$ divides $m$), then mapping to the first factor sends $c$ to 1, which is not divisible by $p$ modulo $m$. 
\end{enumerate}
Finally note that since $B$ is a free product of cyclic groups, maximal abelian subgroups are (finite or infinite) procyclic. The proof is now complete.   
\end{proof}

\begin{prop}
If $v$ is a major vertex of $S({\cal G})$, then ${\cal Z}_G(Z_v)\leq G_v$ and ${\cal N}'_G(Z_v)=G_v$.
\end{prop}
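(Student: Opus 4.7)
The plan is to prove the stronger statement ${\cal N}'_G(Z_v)=G_v$, from which ${\cal Z}_G(Z_v)\leq G_v$ follows automatically via ${\cal Z}_G(Z_v)\leq{\cal N}'_G(Z_v)$. The easy half, $G_v\leq{\cal N}'_G(Z_v)$, is a structural fact about Seifert fibrations: since $v$ is major, either (when the base orbifold is orientable) $Z_v$ is central in $G_v$, or (when the base is non-orientable) $Z_v$ is central in the orientation-preserving index-$2$ subgroup of $G_v$ while outside elements invert a generator of $Z_v$. In both cases $G_v$ normalises $Z_v$ with induced action landing in $\{\pm 1\}$, so $G_v\leq{\cal N}'_G(Z_v)$.

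For the reverse inclusion, I would apply Proposition~\ref{overgroupsfixvertices}(ii) to $Z_v\cong\hat\Z$ (which fixes $v$) to obtain a vertex $w$ of $T=S({\cal G})$ fixed by the whole of ${\cal N}'_G(Z_v)$. Since $G_v\leq{\cal N}'_G(Z_v)$, the vertex $w$ is in particular fixed by $G_v$, so it lies in the fixed subtree $T^{G_v}$ (a subtree by Theorem~\ref{fixedptsubtree}). It therefore suffices to show that $T^{G_v}=\{v\}$: in that case $w=v$, so ${\cal N}'_G(Z_v)$ fixes $v$ and hence lies inside $G_v$.

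To see $T^{G_v}=\{v\}$, note first that any injective path in $T^{G_v}$ has its edges stabilised by the non-trivial group $G_v$, so by 4-acylindricity of the action (Proposition~\ref{JSJacyl}) it has length at most $4$. Since $T^{G_v}$ is connected with uniformly bounded injective-path length, Lemma~\ref{boundeddiameter} guarantees it is path-connected. If some $w'\neq v$ were to lie in $T^{G_v}$, then a path from $v$ to $w'$ in $T^{G_v}$ would begin with an edge $e$ incident to $v$ fixed by $G_v$, giving $G_v\leq G_e$. Combined with the standing Bass-Serre inclusion $G_e\leq G_v$, this forces $G_v=G_e\cong\hat\Z{}^2$. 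However, $G_v$ is the profinite completion of a major Seifert group and the quotient $G_v/Z_v\cong\widehat{\ofg[O]}$ is the profinite completion of a non-trivial free product of cyclic groups (the base orbifold $O$ being hyperbolic with boundary); this quotient is visibly non-cyclic, ruling out $G_v\cong\hat\Z{}^2$ and yielding the desired contradiction.

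The principal obstacle is really only reducing $T^{G_v}$ to a single vertex; once that is in place, the proposition is a formal consequence of Proposition~\ref{overgroupsfixvertices}(ii) together with the elementary observation that major Seifert vertex groups are strictly larger than their peripheral $\hat\Z{}^2$ subgroups.
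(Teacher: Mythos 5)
Your proof is correct and follows essentially the paper's route: both arguments hinge on Proposition \ref{overgroupsfixvertices}(ii) applied to $Z_v$, followed by pinning the resulting fixed vertex down to $v$ itself by comparing the relevant subgroup with the $\hat\Z{}^2$ edge stabilisers. The only real difference is bookkeeping: the paper observes that ${\cal Z}_G(Z_v)$ contains the non-abelian index $\leq 2$ subgroup ${\cal Z}_{G_v}(Z_v)$ while distinct vertex groups meet in at most the abelian $\hat\Z{}^2$, whereas you re-derive the same point by showing $T^{G_v}=\{v\}$ using $4$-acylindricity (Proposition \ref{JSJacyl}), Theorem \ref{fixedptsubtree} and Lemma \ref{boundeddiameter} — a slightly more self-contained justification of the step the paper asserts directly.
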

\begin{proof}
The centraliser of $Z_v$ in $G$ contains the non-abelian group ${\cal Z}_{G_v}(Z_v)$, an index 1 or 2 subgroup of $G_v$. By Proposition \ref{overgroupsfixvertices}, the centraliser of $Z_v$ in $G$ is contained in a vertex group $G_w$; any two distinct vertex groups intersect in at most $\hat\Z{}^2$, so $v=w$ and the centraliser of $Z_v$ in $G$ is equal to the centraliser of $Z_v$ in $G_v$. Similarly for the reduced normaliser, noting that all of $G_v$ is contained in ${\cal N}'_G(Z_v)$.
\end{proof}

\begin{prop}
If $e$ is an edge of $S({\cal G})$, then $G_e$ is a non-pathological torus of $G$. 
\end{prop}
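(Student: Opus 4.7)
The plan is to verify the three defining properties of a non-pathological torus for $G_e$: that $G_e \cong \hat\Z^2$, that $G_e$ is not contained in any strictly larger copy of $\hat\Z^2$, and that $G_e \cap G_e^g$ satisfies the required trichotomy for every $g \in G$. The first follows immediately from Theorem~\ref{JSJefficient} together with the fact that each edge group is the fundamental group of a JSJ torus $T^2$ and hence profinitely completes to $\hat\Z^2$.

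For the second property I would in fact establish the stronger statement that $G_e$ is maximal abelian in $G$. Suppose $G_e \subseteq A$ with $A$ closed and abelian. Since $A$ contains a copy of $\hat\Z^2$ it cannot be isomorphic to any $\Z[\pi]$, so by Corollary~\ref{JSJabel} it fixes a vertex $v$ of $S({\cal G})$, which necessarily lies in the subtree $T^{G_e}$ of $S({\cal G})$ pointwise fixed by $G_e$. I would then argue that $T^{G_e}$ is very small: any other edge $f \in T^{G_e}$ has $G_f \supseteq G_e$, but Lemmas~\ref{malnormalcusps} and~\ref{bdyintersect} bound the intersection of two distinct peripheral subgroups of a hyperbolic or major SFS vertex by $1$ or by $Z_w \cong \hat\Z$ respectively, neither of which can contain $G_e \cong \hat\Z^2$. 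The only exception is when an endpoint of $e$ is a minor SFS vertex, where the unique peripheral $P$ is normal of index $2$; combined with the fact that two minor pieces are never adjacent, this forces $T^{G_e}$ to be a path of length at most $2$. In every case $v$ sits in some $G_w$ in which $G_e$ is a peripheral subgroup, and $G_e$ is maximal abelian in $G_w$: by malnormality in the hyperbolic case (Lemma~\ref{malnormalcusps}), by Proposition~\ref{bdycptsaremax} applied to $G_w / Z_w$ in the major SFS case, and by the explicit Klein-bottle-type structure of $G_w$ in the minor case. Hence $A = G_e$.

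For the intersection trichotomy, set $H = G_e \cap G_e^g$ and assume $H \ne 1$. Then $H$ fixes both edges $e$ and $g^{-1}\cdot e$, hence the geodesic $\gamma \subseteq S({\cal G})$ joining them, and by $4$-acylindricity (Proposition~\ref{JSJacyl}) $\gamma$ has length at most $4$. If $\gamma$ has length $0$ then $g \in G_e$ and $G_e^g = G_e$. Otherwise each vertex $w$ lying between two consecutive edges $f_i, f_{i+1}$ of $\gamma$ yields the constraint $H \subseteq G_{f_i} \cap G_{f_{i+1}}$: at a hyperbolic $w$ this intersection is trivial, at a major SFS $w$ it equals $Z_w$ and is procyclic. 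A minor SFS $w$ imposes no useful constraint, since both incident edge stabilisers coincide with the normal peripheral $P$; but the no-adjacent-minors property of a graph manifold ensures that any $\gamma$ of length $\ge 2$ has some non-minor intermediate vertex, so $H$ is trivial or procyclic. Finally, if $\gamma$ has length $1$ with shared vertex $w$ minor, then $G_e$ and $G_{g^{-1}e}$ are both equal to the unique peripheral $P$ of $G_w$, giving $G_e^g = G_e$.

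The main obstacle is the bookkeeping around minor SFS vertices, where edge stabilisers can coincide in unexpected ways and the generic acylindricity bounds are insufficient to force the intersection to be procyclic; this is systematically resolved by invoking the Klein-bottle structure of the minor pieces together with the fact that two minor vertices are never adjacent in the JSJ graph of a graph manifold.
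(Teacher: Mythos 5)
Your proposal is correct and follows essentially the same route as the paper: the intersection trichotomy is handled exactly as in the paper via the acylindricity analysis of Proposition \ref{JSJacyl} (with Lemmas \ref{bdyintersect} and \ref{malnormalcusps} and the normality of the peripheral subgroup at minor vertices), and your maximality step rests on the same inputs (the abelian dichotomy of Corollary \ref{JSJabel}/Proposition \ref{abelianactingacyl}, Proposition \ref{bdycptsaremax}, and malnormality of cusps). The only organisational difference is that you establish the (slightly stronger) maximal-abelian property uniformly by locating a fixed vertex inside the short fixed subtree $T^{G_e}$, whereas the paper splits into the case of a major endpoint (using ${\cal Z}_G(Z_v)\leq G_v$ and the quotient $G_v/Z_v$) and the case of no major endpoint (bounding the geodesic $[u,v]$ by malnormality); this is a repackaging rather than a genuinely different argument.
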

\begin{proof}
Suppose first that $e=[v,w]$ where $v$ is a major vertex. First note that $G_e$ is a maximal copy of $\hat\Z{}^2$.  For if $A=G_e$ were contained in a larger copy $A'$ of $\hat\Z{}^2$, then $A'$ would centralise $Z_v$, hence be contained in $G_v$. The image of $A'$ in $G_v/Z_v$ would be abelian, containing a copy of $\hat\Z$. By Proposition \ref{abelianactingacyl}, maximal abelian subgroups of $G_v/Z_v$ are finite or projective as it is a free product of procyclic groups; so the image of $A'$ is a copy of $\hat\Z$ properly containing a boundary component. By Proposition \ref{bdycptsaremax}, this is impossible. So these edge groups are maximal copies of $\hat\Z{}^2$. 

Now suppose $e=[v,w]$ where $v$ is hyperbolic and $w$ hyperbolic or minor, the only cases where $e$ has no major endpoints. Assume $A$ is contained in some strictly larger copy $A'$ of $\hat\Z{}^2$. Then $A'$ is contained in some vertex group $G_u$. The geodesic $[u,v]$ thus has stabiliser containing $A$. If $[u,v]$ has one edge then we are done as by Lemma \ref{malnormalcusps} all edge groups are maximal in adjacent vertex groups. If $[u,v]$ has more than two edges, recall that the peripheral subgroups of $G_v$ are malnormal by Lemma \ref{malnormalcusps}; from which we deduce that $[u,v]$ has at most two edges, those adjacent to the minor vertex $w$, and that $A'$ is contained in these edge groups. Thus $A=A'$ and again $A$ is maximal.

Suppose $g\in G$ and that $G_e\cap G^g_e$ is not a subgroup of $\hat\Z$. By the arguments in Proposition \ref{JSJacyl}, an intersection of two edge groups is at most $\hat\Z$ unless the edges are equal or are the two edges incident to a minor vertex $w$. So either $g^{-1}\cdot e=e$, whence $g\in G_e$ and $G_e^g=G_e$, or $g\in G_w$ and $G_e$ is normal in $G_w$ so that again $G_e\cap G_e^g=G_e$.
\end{proof}
\begin{prop}
If $v$ is a major vertex of $S({\cal G})$, then $Z_v$ is a major fibre-like subgroup of $G$.
\end{prop}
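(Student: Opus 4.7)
The plan is to verify each of the three clauses in the definition of ``major fibre-like'' for $C=Z_v$.

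For clause (i), since $M_v$ is a major Seifert-fibred piece of a non-trivial JSJ decomposition it has at least one boundary torus, so $v$ has at least one incident edge $e$ in $S(\mathcal{G})$. That boundary torus is fibred by regular fibres, so $\pi_1$ of the torus splits at the discrete level as $Z_v\oplus\mathbb{Z}$, and this direct sum persists on passing to the profinite completion, giving $G_e=Z_v\oplus\hat{\mathbb{Z}}$. By the preceding proposition $G_e$ is a non-pathological torus, so $Z_v$ is a direct factor of one.

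For clause (ii), the earlier proposition gives $\mathcal{Z}_G(Z_v)\leq G_v$, and this centraliser contains $\mathcal{Z}_{G_v}(Z_v)$ as an index at most $2$ subgroup of $G_v$. Since $v$ is major, the base orbifold of $M_v$ is hyperbolic, so the profinite completion of its orbifold fundamental group, and hence $G_v$ itself, is not virtually abelian; thus nor is its finite-index subgroup $\mathcal{Z}_G(Z_v)$.

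For clause (iii), suppose $C'\supsetneq Z_v$ is a procyclic subgroup of $G$ with topological generator $b$, so $b^k\in Z_v$ for some $k\geq1$. Any element centralising $b$ also centralises $\overline{\langle b\rangle}=C'\supseteq Z_v$, so $\mathcal{Z}_G(C')=\mathcal{Z}_G(b)$ lies inside $\mathcal{Z}_G(Z_v)\leq G_v$, or in the canonical index-$2$ subgroup $G_v^+$ if the base orbifold is non-orientable (so that $Z_v$ is central there). The image $\bar b$ in the Fuchsian-type quotient $B=G_v/Z_v$ is a non-trivial torsion element; the standard action of $B$ on the Bass-Serre tree of its free-product decomposition is $0$-acylindrical, so by Theorem \ref{fixedptsubtree} and Corollary \ref{samepathcpt} the element $\bar b$ fixes a unique vertex $v_0$, whose stabiliser is a finite cyclic cone-point group $\mathbb{Z}/p$. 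Elements of $\mathcal{Z}_B(\bar b)$ preserve the singleton fixed set $\{v_0\}$ and so lie in $\mathbb{Z}/p$; lifting back, $\mathcal{Z}_{G_v}(b)$ is contained in a central extension of $\mathbb{Z}/p$ by $Z_v\cong\hat{\mathbb{Z}}$, which is abelian. Hence $\mathcal{Z}_G(C')$ is abelian, contradicting clause (ii) for $C'$. The main subtlety of the proof is this clause; the key ingredients are that any procyclic overgroup of $Z_v$ is trapped inside $G_v$ by the preceding centraliser proposition, and that the $0$-acylindricity of the free-product action of the orbifold quotient bounds the centraliser of a torsion element by a finite cyclic group.
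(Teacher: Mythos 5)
Your clauses (i) and (ii) are correct and coincide with what the paper does (it simply asserts both facts). The genuine problem is the first inference in clause (iii): from $Z_v\subsetneq C'=\overline{\langle b\rangle}\iso\hat\Z$ you deduce that $b^k\in Z_v$ for some integer $k\geq 1$, and hence that the image $\bar b$ in $B=G_v/Z_v$ is a non-trivial torsion element. This is false for procyclic groups: a proper closed subgroup of $\hat\Z$ isomorphic to $\hat\Z$ need not have finite index and need not contain any non-zero integer power of the generator --- for instance $\prod_p p\Z[p]$ is a copy of $\hat\Z$ inside $\hat\Z$ containing no non-zero rational integer. In general $C'/Z_v\iso\prod_p \Z/p^{n_p}$ may be infinite, so $\bar b$ need not be torsion, and the remainder of your clause (iii), which rests entirely on $\bar b$ being a non-trivial torsion element with a unique fixed vertex, does not get started. (A smaller point: the existence of the fixed vertex for a torsion element should be cited to Theorem \ref{finitegroups}; Corollary \ref{samepathcpt} only gives path-connectedness of fixed sets.)

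The gap can be filled, but the missing step is essentially the paper's own maximality argument. What is true is that $C'\leq{\cal Z}_G(C')\leq{\cal Z}_G(Z_v)\leq G_v$, so the image $C'/Z_v$ is a closed abelian subgroup of $B=\widehat{\ofg[O]}$ containing non-trivial torsion (some $n_p>0$ because the inclusion $Z_v\subsetneq C'$ is proper); by Proposition \ref{abelianactingacyl} applied to the $0$-acylindrical action of $B$ on the standard tree of its free-product decomposition, every closed abelian subgroup of $B$ is either isomorphic to some $\Z[\pi]$ or lies in a conjugate of a vertex group, hence is either torsion-free or finite cyclic. Therefore $C'/Z_v$ is forced to be finite, and only then is $\bar b$ torsion and your fixed-point/centraliser computation legitimate. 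The paper runs the analogous argument one level up: any $A'\iso\hat\Z{}^2$ containing a procyclic subgroup strictly larger than $Z_v$ centralises $Z_v$, so lies in $G_v$, and its image in $B$ would be an infinite abelian subgroup with torsion, which the same structure result forbids. Your alternative ending --- showing that ${\cal Z}_G(C')$ is abelian because the centraliser in $B$ of a non-trivial torsion element is trapped in a cone-point stabiliser, whose preimage inside ${\cal Z}_G(Z_v)$ is a central extension of a finite cyclic group by $\hat\Z$ and hence abelian, contradicting clause (ii) for $C'$ --- is sound once the finiteness of $C'/Z_v$ is supplied, and is a reasonable variant of the paper's argument; but as written the torsion claim is unjustified.
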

\begin{proof}
Note that $Z_v$ is a direct factor of an edge group $A=G_e$ ($e=[v,w]\in S({\cal G})$) which is a non-pathological torus, and has centraliser which is not virtually abelian. It remains to show that $Z_v$ is maximal with these properties; but again, if it were contained in a larger procyclic subgroup $C$ contained in a copy $A'$ of $\hat\Z{}^2$, this $A'$ would centralise $Z_v$, hence lie in $G_v$. The image of $A'$ in $G_v/Z_v$ would then be infinite abelian, hence projective; but killing a non-maximal copy $Z_v$ of $\hat\Z$ inside $A'$ would introduce torsion. Hence $Z_v$ is major fibre-like.
\end{proof}
\begin{prop}
Let $C\iso\hat\Z$ be a subgroup of $G$. If $C$ is major fibre-like then $C=Z_v$ for some major vertex $v$. If $C$ is contained in an edge group $G_e$ then either $C\leq Z_v$ for some (possibly minor) vertex $v$ or ${\cal Z}_G(C)\iso\hat\Z{}^2$.
\end{prop}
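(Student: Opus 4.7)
The plan is to establish the second assertion first, then derive the first from it. For the second, suppose $C\leq G_e$ and $C\not\leq Z_v$ for every Seifert vertex $v$; I aim to show ${\cal Z}_G(C)=G_e\iso\hat\Z{}^2$. Since $G_e$ is abelian, $G_e\leq{\cal Z}_G(C)$, and Proposition \ref{overgroupsfixvertices}(ii) furnishes a vertex $u$ with ${\cal Z}_G(C)\leq G_u$, so it suffices to show ${\cal Z}_{G_u}(C)\subseteq G_e$. When $u$ is an endpoint of $e$, I split on the type of $u$: for $u$ hyperbolic, malnormality of peripherals (Lemma \ref{malnormalcusps}) gives ${\cal Z}_{G_u}(C)\leq G_e$; for $u$ a major \SFS{}, I pass to $G_u/Z_u=\widehat{\ofg[O_u]}$, where $\bar C$ is a non-trivial subgroup of the peripheral of $O_u$ corresponding to $e$, and Proposition \ref{bdycptsaremax} identifies the centraliser of $\bar C$ in the quotient as this peripheral, whose preimage in $G_u$ is $G_e$; for $u$ minor, I compute directly in the completed Klein bottle group, with canonical fibre $Z_u=\gp{a^2}$ and index-two peripheral $G_e=\gp{a^2,b}$, and observe that any element of $G_e$ outside $Z_u$ has centraliser $G_e$ because the outer action of $a$ inverts $b$.

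If instead $u$ is not an endpoint of $e$, I exploit that $G_e$ is a non-pathological torus (and hence a maximal $\hat\Z{}^2$-subgroup) together with the principle, extracted from the proof of the preceding proposition, that two distinct edge groups can share a $\hat\Z{}^2$-intersection only at a minor vertex. Applied to the geodesic from $u$ to the nearest endpoint of $e$, which $G_e$ stabilises edge-by-edge (since $G_e\leq G_u\cap G_v$), this forces every intermediate vertex to be minor; combined with non-adjacency of minor vertices this collapses the geodesic to a single edge with that endpoint of $e$ minor and $u$ major, and the major \SFS{} sub-case then applies.

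For the first assertion, let $C$ be major fibre-like, witnessed by a non-pathological torus $A\iso\hat\Z{}^2$ containing $C$ as a direct factor. Since $A$ is abelian of rank two, Corollary \ref{JSJabel} produces a vertex $v$ fixed by $A$. A parallel case analysis identifies $A$ with an edge group $G_e$ incident to $v$: Lemma \ref{onlycusps} handles $v$ hyperbolic; for $v$ major the image of $A$ in $G_v/Z_v$ is an abelian subgroup of a profinite free product of cyclic groups, hence lies in a peripheral factor, and Proposition \ref{bdycptsaremax} together with the rank condition on $A$ forces $A$ to be the preimage of a full peripheral; for $v$ minor, the index-two peripheral is the only $\hat\Z{}^2$-subgroup of the completed Klein bottle group. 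Applying the second assertion to $C\leq G_e$ and discarding the alternative ${\cal Z}_G(C)\iso\hat\Z{}^2$ (which is virtually abelian and contradicts fibre-likeness) yields $C\leq Z_u$ for some Seifert vertex $u$.

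It remains to pin down $u$. If $u$ were minor, centrality of $Z_u$ in $G_u$ would give $G_u\leq{\cal Z}_G(C)$; but Proposition \ref{overgroupsfixvertices}(ii) places ${\cal Z}_G(C)$ inside some $G_w$, and $w\neq u$ would force the non-abelian group $G_u$ to sit inside the abelian edge groups along the geodesic $[u,w]$, which is impossible, so ${\cal Z}_G(C)=G_u$ would be virtually abelian, contradicting fibre-likeness. Hence $u$ is major, and now $Z_u$ itself is a direct factor of every edge group at $u$ and has centraliser containing (an index-two subgroup of) the non-virtually-abelian $G_u$, making $Z_u$ itself a fibre-like subgroup containing $C$; the maximality clause in the definition then gives $C=Z_u$. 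The principal obstacle in this plan is the combinatorial argument in the second case of the second assertion, where maximality of $G_e$ among $\hat\Z{}^2$-subgroups, non-adjacency of minor vertices, and $4$-acylindricity of the action must be combined delicately to rule out a centraliser vertex $u$ at distance greater than one from $e$.
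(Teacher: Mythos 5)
Your treatment of the second assertion is essentially sound, and in fact takes a slightly different route from the paper's (you locate a vertex $u$ containing ${\cal Z}_G(C)$ via Proposition \ref{overgroupsfixvertices} and then argue case-by-case on $u$, including the geodesic/acylindricity argument when $u$ is not an endpoint of $e$, whereas the paper works directly with the torus $G_e$ and the non-pathological machinery). Two repairs are needed there, though neither is fatal: in the major sub-case the fact you need --- that an element of $\widehat{\ofg[O_u]}$ centralising a non-trivial closed subgroup of a peripheral subgroup $D$ already lies in $D$ --- is not what Proposition \ref{bdycptsaremax} says (that proposition only excludes abelian overgroups of the \emph{full} peripheral subgroup); the correct tool is the self-intersection case of Lemma \ref{orbbdy}, namely $D\cap D^{x}\neq 1$ forces $x\in D$. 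And in the collapsed-geodesic case the vertex $u$ adjacent to the minor endpoint need not be major (it could be hyperbolic when $M$ has hyperbolic pieces), but this is harmless since your hyperbolic sub-case already covers it.

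The genuine gap is in the first assertion. You try to show that the witness non-pathological torus $A$ of a major fibre-like $C$ is an edge group, and in the major-vertex case you justify this by asserting that the image of $A$ in $G_v/Z_v=\widehat{\ofg[O_v]}$ is abelian and ``hence lies in a peripheral factor''. That implication is false: abelian subgroups of $\widehat{\ofg[O_v]}$ lie in maximal procyclic subgroups, and these are generally not peripheral. For instance, if the base is a once-punctured torus then $\pi_1 M_v\iso\Z\times F_2$ and $A=\hat\Z\times\overline{\gp{a}}$, with $a$ a basis element of $F_2$, is a maximal copy of $\hat\Z{}^2$ whose image in the orbifold group is nowhere near a boundary component; nothing you have said rules such $A$ out as the witness torus, so the reduction of the first assertion to the second (which you proved only for $C$ inside an edge group) does not go through. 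Symptomatically, your argument never uses the conjugate-intersection clause in the definition of a non-pathological torus, and that clause is exactly what is needed here: the paper handles an arbitrary witness torus by noting that if $C\neq Z_v$ then $C\times Z_v$ has finite index in $A$, so any $x\in{\cal Z}_{G'_v}(C)$ has $A\cap A^x\supseteq C\times Z_v$, whence $A^x=A$ by non-pathologicality; procyclicity of maximal abelian subgroups of $G_v/Z_v$ then forces $x\in A$, so ${\cal Z}_G(C)$ is virtually abelian, contradicting fibre-likeness. You would either need to reproduce an argument of this kind or prove (which you have not, and which your cited facts do not give) that every non-pathological torus inside a major vertex group is an edge group. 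Your concluding step --- once $C\leq Z_u$, excluding $u$ minor via the centraliser and invoking the maximality clause to get $C=Z_u$ --- is fine.
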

\begin{proof}
If $C$ is contained in an edge group, it suffices to replace $C$ with a maximal copy of $\hat\Z$ such that $C\leq\hat\Z\leq G_e$. Let $A$ be a non-pathological torus containing $C$ as a direct factor, choosing $A=G_e$ when $C$ is contained in an edge group. Note that $A$ is contained in a vertex group by Corollary \ref{JSJabel}.

Then $C$ and hence its centraliser lie in a vertex group $G_v$. If the centraliser is not just $A$, then $v$ is unique. If $v$ is hyperbolic then $A$ is a peripheral subgroup by Lemma \ref{onlycusps}; then if $x\in{\cal Z}_G(C)$, then $A\cap A^x\neq 1$ whence $x\in A$ by malnormality. If $v$ is minor, so that $A$ is an index 2 subgroup of $G_v$, the result is easy; so suppose $v$ is major. Note that, as a maximal abelian subgroup of $G_v/Z_v$ is procyclic, $Z_v$ is a direct factor of $A$, so that $C\times Z_v$ is a finite-index subgroup of $A$ unless $C=Z_v$. So suppose $C\neq Z_v$; we will show that $C$ is not major fibre-like, and that if $C$ was in a boundary component, its centraliser is exactly $A$. 

Consider first the index 1 or 2 subgroup ${\cal Z}_G(Z_v)=G'_v\leq G_v$ in which $Z_v$ is central. If $x\in G'_v\cap{\cal Z}_G(C)$, then $x$ commutes with both generators of both $C$ and $Z_v$; so $A^x\cap A$ is at least the finite index subgroup $C\times Z_v$. Because $A$ is non-pathological, $A^x=A$. Any action on $A\iso \hat\Z{}^2$ which is trivial on a finite-index subgroup of $A$ is trivial on all of $A$; so $\overline{<\!x,A\!>}$ is abelian. As before, taking the quotient gives an infinite abelian subgroup of $G_v/Z_v$, which is thus procyclic. Hence $\overline{<\!x,A\!>}$ is a copy of $\hat\Z{}^2$; by maximality this is $A$, hence $x\in A$ and $G'_v\cap{\cal Z}_G(C)=A$. So ${\cal Z}_G(C)$ is virtually abelian (with index 1 or 2), and $C$ is not major fibre-like.

Let $q:G_v\to G_v/Z_v$ be the quotient map. If $C$ is a direct factor of a boundary component $A=G_e$ other than $Z_v$, then the image of $C$ in $B=G_v/Z_v$ generates a finite-index subgroup of a peripheral subgroup $D=q(G_e)$ of the base orbifold. If $x\in{\cal Z}_G(C)$, then from above either $x\in G_e$ or $x^2\in G_e$.  Then $q(x)$ commutes with the finite-index subgroup of $D$ generated by $C$; so $D^q(x)$ intersects $D$ non-trivially, and by Proposition \ref{bdyintersect} we find $q(x)\in D$ so that $x\in q^{-1}(D)=G_e$.
\end{proof}
This last proposition shows that the property of being a canonical fibre subgroup $Z_v$ of a major vertex may defined intrinsically, as a `major fibre-like' subgroup. We will use this to show invariance of the JSJ decomposition.  
\begin{proof}[Proof of Theorem \ref{decompfixed}]
Let $M$, $N$ be aspherical 3-manifolds with JSJ decompositions $(X, M_\bullet)$, $(Y, N_\bullet)$, let $(X,{\cal G}_\bullet)$, $(Y, {\cal H}_\bullet)$ be the corresponding graphs of profinite groups, and let $S(\cal G)$, $S(\cal H)$ be the standard graphs for these graphs of profinite groups. Suppose there exists an isomorphism $\Phi:\widehat{\pi_1 M}=G\to H=\widehat{\pi_1 N}$. Let $\zeta:S({\cal G})\to X$ be the projection.

Let $A$ be any maximal copy of $\hat\Z{}^2$ in $G$. Then $A$ is contained in a vertex group, and in the centraliser of any of its cyclic subgroups. In particular, if it has two major fibre-like subgroups, then it is contained in two distinct major vertex groups, hence $A$ is some edge group $G_e$ and $\zeta(e)\in\SSG$. Conversely, if $G_e$ is any edge group where $\zeta(e)\in\SSG$, it has two major fibre-like subgroups; if $e=[v,w]$ where $v,w$ are major, then $Z_v,Z_w$ are major fibre-like subgroups of $G_e$; if $w$ is minor, then it has another adjacent vertex $v'$ with $G_{[v,w]}=G_{[v',w]}$, and so $Z_v$, $Z_{v'}$ are major fibre-like subgroups of $G_e$. They are distinct, otherwise as in the proof of Proposition \ref{JSJacyl} they coincide with a fibre subgroup of $G_w$, giving a contradiction. Furthermore, the intersection of any three major vertex groups is at most cyclic, so $G_e$ cannot contain three distinct major fibre-like subgroups.

Now construct an (unoriented) abstract graph $\Gamma$ as follows. The vertices of $\Gamma$ are the major fibre-like subgroups $Z_v$, and the edges are those maximal $\hat\Z{}^2$ subgroups containing two major fibre-like subgroups, i.e. the edge groups $G_e$ with $\zeta(e)\in\SSG$. The incidence maps are defined by inclusion. This incidence relation is not quite the same as incidence in $S(\cal G)$; two major vertex groups separated in $S(\cal G)$ by a minor vertex are now adjacent in $\Gamma$. We now rectify this. All maximal cyclic subgroups of an edge group $G_e$ have either have centraliser $\hat\Z{}^2$ or are $Z_v$ for some major or minor vertex $v$; so for each edge group $G_e$ with a third maximal procyclic subgroup with centraliser larger than $\hat\Z{}^2$, subdivide the corresponding edge of $\Gamma$ to get a new graph $\Gamma'$ with a vertex representing the minor vertex group whose canonical fibre subgroup is contained in $G_e$. Clearly $\Gamma'$ is isomorphic to $\zeta^{-1}(\SSG)$ as an abstract graph; the $G$-action on $\Gamma'$ induced from $S({\cal G})$ is determined by conjugation of the $Z_v$. On the other hand, the graph $\Gamma'$ and $G$-action so constructed are invariants of the group, so the isomorphism $\Phi:G\iso H$ yields an equivariant isomorphism of $\Gamma'$ with the corresponding object for $H$; hence  the quotient graphs \SSG{} and \SSG[Y] are isomorphic as claimed. Furthermore corresponding vertex stabilisers (i.e. the profinite completions of the Seifert fibred pieces of $M$ and $N$) are isomorphic. This completes the first part of the theorem. 

Now assume $M$, $N$ are graph manifolds. Now $\Gamma'\iso S(\cal G)$ as an abstract graph, hence we get an equivariant isomorphism $\Psi$ between $S({\cal G})$ and $S({\cal H})$, in the sense that 
\[\Psi(g\cdot v)= \Phi(g)\cdot \Psi(v)\]
for all $g\in G, v\in V(S({\cal G}))$. Note that this descends to an isomorphism 
\[ X = G\backslash S({\cal G}) \iso H\backslash S({\cal H}) = Y\]

We now check that the morphism of graphs thus constructed is in fact continuous, hence an isomorphism in the category of profinite graphs. For by Lemma \ref{Gdecomp}, $S({\cal G})$ is the inverse limit of its quotients by finite index normal subgroups of $G$. However, for each such subgroup $N$, we know that $\Psi$ induces a natural morphism of (abstract) graphs
\[ N\backslash S({\cal G}) \iso \Phi(N)\backslash S({\cal H}) \]
But these graphs, being finite covers of $X$ and $Y$ respectively (with covering groups $G/N, H/\Phi(N)$ respectively) are finite, hence these morphisms are continuous; that is, we have isomorphisms of inverse systems
\[ S({\cal G}) \iso \varprojlim N\backslash S({\cal G}) \iso\varprojlim\Phi(N)\backslash S({\cal H})\iso S({\cal H}) \]
so that our morphism $\Psi$ is indeed continuous.

Note that by Proposition \ref{overgroupsfixvertices}, the restricted normaliser of each $Z_v$ is contained in a vertex group; as it contains $G_v$ which is not contained in any edge group, we have that ${\cal N}'_G(Z_v)=G_v$. We now have an equivariant isomorphism $\Psi:S({\cal G})\to S({\cal H})$ of profinite graphs such that 
\[ G_v \iso \Phi(G_v) = \Phi({\cal N}'_G(Z_v)) = {\cal N}'_H(\Phi(Z_v))={\cal N}'_H(Z_{\Psi(v)}) = H_{\Psi(v)}\]
for $v\in V(S({\cal G}))$ and with each edge group the intersection of the adjacent vertex groups. This descends to an isomorphism
\[ X = G\backslash S({\cal G}) \iso H\backslash S({\cal H}) = Y\]
such that corresponding vertices and edges of $X$ and $Y$ have isomorphic associated groups; that is, we have an isomorphism ${\cal G}\iso {\cal H}$ of graphs of groups.

The fundamental group of a graph of profinite groups is well-defined independently of any choice of maximal subtree of $T$ and section $T\to S(\cal G)$; so that, following an automorphism of $H$, we may assume that the isomorphism $\Phi$ sends each vertex group of $\cal G$ to the corresponding vertex group of $\cal H$. See Section 5.2 of \cite{RZup} for details. 
\end{proof}

\section{Graph manifolds versus mixed manifolds}\label{secGvM}
The results of the previous section are seen to give very good information about graph manifolds; however picking up the precise nature of any hyperbolic pieces that may exist seems rather more subtle. In this section we show that the profinite completion does detect the presence of a hyperbolic piece. This can be seen as an extension of \cite{WZ14} in that we now know that the profinite completion determines which geometries arise in the geometric decomposition of a 3-manifold. 

\begin{theorem}
Let $M$ be a  (closed) mixed or totally hyperbolic manifold and $N$ be a graph manifold. Then $\pi_1 M$ and $\pi_1 N$ do not have isomorphic profinite completions. 
\end{theorem}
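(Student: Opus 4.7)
My plan is to apply Theorem~\ref{decompfixed} to a hypothetical isomorphism $\Phi:\widehat{\pi_1 M}\iso\widehat{\pi_1 N}$: it furnishes a graph isomorphism $\varphi:\SSG\iso\SSG[Y]$ together with profinite isomorphisms of corresponding Seifert vertex groups. Since $N$ is a (non-geometric) graph manifold, every JSJ piece of $N$ is Seifert fibred, so $\SSG[Y]=Y$ is the whole JSJ graph of $N$; in particular it is non-empty.

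The totally hyperbolic case is then immediate: $M$ has no Seifert pieces so $\SSG=\emptyset$, but $Y\neq\emptyset$, ruling out the graph isomorphism $\varphi$.

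For the mixed case, choose a hyperbolic JSJ piece $M_u$ of $M$, set $G=\widehat{\pi_1 M}$, $H=\widehat{\pi_1 N}$, and $G_u=\widehat{\pi_1 M_u}$, and consider the action of $\Phi(G_u)\leq H$ on the standard graph $S(\cal H)$. First I would show that $\Phi(G_u)$ cannot fix a vertex of $S(\cal H)$: if it fixed $w$, then $G_u$ would embed into the Seifert vertex group $H_w=\widehat{\pi_1 N_w}$. The canonical fibre $Z_w\iso\hat\Z$ is central in an open index-$\leq 2$ subgroup of $H_w$, so the pullback of $Z_w$ would be an abelian normal subgroup of a finite-index open subgroup of $G_u$; since $\pi_1 M_u$ is non-elementary relatively hyperbolic, no such subgroup can exist, forcing the intersection to be trivial. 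Hence $G_u$ injects into the quotient $H_w/Z_w\iso\widehat{\ofg[(O_w)]}$, which is a free profinite product of cyclic groups; by Proposition~\ref{abelianactingacyl} applied to the standard action on its Bass-Serre tree, such a group contains no $\hat\Z^2$-subgroup. This contradicts the existence of cusp $\hat\Z^2$-subgroups in $G_u$.

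Consequently $\Phi(G_u)$ acts on $S(\cal H)$ without any global fixed vertex, and passing to a minimal invariant subtree yields a non-trivial graph-of-profinite-groups decomposition of $G_u\iso\widehat{\pi_1 M_u}$ with abelian edge groups (subgroups of the $\hat\Z^2$-edge groups of $\cal H$). The main obstacle is to rule this out: discretely, $\pi_1 M_u$ is one-ended and atoroidal and therefore admits no non-trivial splitting over any abelian subgroup, and the profinite analogue of this rigidity should follow from efficiency of the trivial JSJ decomposition of $M_u$ (Theorem~\ref{JSJefficient}), the $4$-acylindricity of the induced action (Proposition~\ref{JSJacyl}), and the tools of Section~\ref{secAcylindrical} on abelian subgroups acting acylindrically on profinite trees.
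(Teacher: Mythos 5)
Your reduction via Theorem \ref{decompfixed} and the disposal of the totally hyperbolic case ($\SSG=\emptyset$ while $\SSG[Y]=Y\neq\emptyset$) are correct and coincide with the paper's starting point. The mixed case, however, has genuine gaps. First, the step forcing $\Phi(G_u)\cap Z_w=1$ by appeal to $\pi_1 M_u$ being non-elementary relatively hyperbolic conflates the discrete group with its profinite completion: the non-existence of abelian normal subgroups in finite-index subgroups of $\pi_1 M_u$ does not automatically pass to open subgroups of $G_u=\widehat{\pi_1 M_u}$, and no such transfer is established in the paper (this particular step might be repairable using Lemmas \ref{malnormalcusps} and \ref{onlycusps}, but as written it is unjustified). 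Second, and more seriously, ``passing to a minimal invariant subtree yields a non-trivial graph-of-profinite-groups decomposition of $G_u$'' is not available: $\Phi(G_u)$ is a closed subgroup of infinite index in $H$, so its quotient of $S({\cal H})$ (or of a minimal invariant subtree) is in general an \emph{infinite} profinite graph, and the paper explicitly remarks at the end of Section \ref{secGofGs} that the structure theorem recovering a graph-of-groups decomposition fails for infinite quotient graphs. Third, the step you yourself flag as ``the main obstacle'' --- a profinite analogue of the fact that a one-ended atoroidal (cusped hyperbolic) manifold group admits no splitting over abelian subgroups, i.e.\ a fixed-point statement for $\widehat{\pi_1 M_u}$ acting on profinite trees with abelian edge stabilisers --- is exactly the missing content: it does not follow from efficiency (Theorem \ref{JSJefficient}), from $4$-acylindricity, or from Section \ref{secAcylindrical}, whose results control abelian subgroups and normalisers of procyclic subgroups, not the action of an entire hyperbolic vertex group. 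So the proposal stops short of a proof.

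For comparison, the paper avoids any rigidity input about hyperbolic pieces. Since an isomorphism $\widehat{\pi_1 M}\iso\widehat{\pi_1 N}$ matches up finite-index subgroups, Theorem \ref{decompfixed} applies to every corresponding pair of finite covers; finite covers of the graph manifold $N$ are graph manifolds, so \emph{every} finite cover of $M$ must have connected and non-empty Seifert graph. The contradiction is then produced purely topologically: choose a JSJ torus $T$ between a hyperbolic piece and a major Seifert piece (a double cover kills the case where only minor pieces abut the hyperbolic part), use efficiency of the JSJ decomposition to pass to a finite cover $M'$ in which some preimage $T'$ of $T$ is non-separating, and then cut along $T'$ and double to obtain a degree-two cover whose Seifert graph is disconnected. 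If you want to complete your argument along the paper's lines, this covering-space construction is the route to take; otherwise you would need to actually prove the profinite fixed-point/no-splitting statement for $\widehat{\pi_1 M_u}$, which is a substantial result not contained in this paper's toolkit.
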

\begin{proof}
Let $M, N$ have JSJ decompositions $(X, M_\bullet)$, $(Y, N_\bullet)$ and assume $\widehat{\pi_1 M}\iso \widehat{\pi_1 N}$. Then by Theorem \ref{decompfixed}, $\SSG\iso \SSG[Y]=Y$ which is connected as $N$ is a graph manifold. Furthermore any finite-index cover of a graph manifold is a graph manifold and any finite-index cover of a mixed or totally hyperbolic manifold is mixed or totally hyperbolic. Hence any finite-index cover of $M$ has connected (and non-empty) Seifert graph. We will show that this is impossible. Let $M_1$ be a hyperbolic piece of $M$ adjacent to \SSG. Take a boundary torus $T$ of $M_1$ lying between $M_1$ and a major Seifert fibred piece of $M$. (If $M$ has only minor Seifert fibred pieces, it has a double cover with empty Seifert graph, which is forbidden).

Note that some finite-sheeted cover of $M_1$ has more than one boundary component which projects to $T$. The JSJ decomposition of $M$ is efficient by Theorem A of \cite{WZ10}; therefore some finite-sheeted cover $M'$ of $M$ induces a (possibly deeper) finite-sheeted cover $M'_1$ of $M_1$, which will still have more than one boundary component projecting to $T$. One such boundary torus $T'$ is now non-separating in $M'$. For all preimages of $T$ are adjacent to a Seifert fibred piece of $M'$, so if $T'$ were separating, the Seifert graph of $M'$ would be disconnected, which is forbidden. Cut along $T'$, take two copies of the resulting 3-manifold, and glue these together to get a degree two cover $\tilde M$ of $M'$. Removing the two copies of $M'_1$ from $\tilde M$ gives a disconnected manifold, each of whose components contains a Seifert fibred piece, so the Seifert graph of $\tilde M$ is disconnected as required.
\end{proof}

\section{Totally hyperbolic manifolds}\label{secPureHyp}
Theorem \ref{decompfixed} does not give any information about those manifolds whose JSJ decomposition has no Seifert fibred pieces at all; we shall call such manifolds `totally hyperbolic'. In this section we show that the analysis in Section \ref{secJSJ} does allow us to deduce some limited information about the JSJ decomposition of these manifolds, even without a way to detect what the vertex groups are. Specifically we will prove the following theorem.
\begin{theorem}\label{purehyp}
Let $M$, $N$ be totally hyperbolic manifolds with $G=\widehat{\pi_1 M}\iso \widehat{\pi_1 N}$ and with JSJ decompositions $(X, M_\bullet)$, $(Y, N_\bullet)$. Then the graphs $X$ and $Y$ have equal numbers of vertices and edges and equal first Betti numbers. 
\end{theorem}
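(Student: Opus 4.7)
The plan is to recover $|E(X)|$ and $b_1(X)$ as profinite invariants of $G=\widehat{\pi_1 M}$; since $M$ is connected $X$ is connected, so $|V(X)|=|E(X)|-b_1(X)+1$ follows from these two. The analysis of Section~\ref{secJSJ} should apply almost verbatim in this totally hyperbolic setting, but now with sharper acylindricity: because every vertex group $G_v$ is the profinite completion of a cusped hyperbolic $3$-manifold group, Lemma~\ref{malnormalcusps} makes its peripheral $\hat\Z^2$-subgroups malnormal, so two distinct edges of $S(\cal G)$ meeting at a common vertex have trivial joint stabiliser. The action of $G$ on $S(\cal G)$ is therefore $1$-acylindrical.

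For the edge count I would first observe, using Proposition~\ref{abelianactingacyl} exactly as in Corollary~\ref{JSJabel}, that every $\hat\Z^2$-subgroup of $G$ is elliptic; by Lemma~\ref{onlycusps} it lies in a peripheral, hence edge, subgroup of some vertex stabiliser; and by malnormality edge stabilisers are themselves maximal among $\hat\Z^2$-subgroups of $G$. Finally $1$-acylindricity forces $gG_eg^{-1}=G_{e'}$ to imply $g\cdot e=e'$ (otherwise $G_e$ would stabilise a path of length $\geq 2$). Therefore $|E(X)|$ equals the number of $G$-conjugacy classes of maximal $\hat\Z^2$-subgroups of $G$, a profinite invariant.

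For the first Betti number I would exploit the fact that, because $S(\cal G)$ is a profinite tree, the two-term complex
\[ 0\to[[\hat\Z E^*(S(\cal G))]]\to[[\hat\Z V(S(\cal G))]]\to\hat\Z\to 0 \]
is an exact resolution of the trivial $\hat\Z[[G]]$-module $\hat\Z$ by direct sums of permutation modules $\hat\Z[[G/G_x]]$. Applying $\mathrm{Tor}^\bullet_{\hat\Z[[G]]}(\hat\Z,-)$ and Shapiro's lemma on each orbit yields the exact sequence
\[ G^{\rm ab}\xrightarrow{\delta}\bigoplus_{[e]\in E(X)}\hat\Z\xrightarrow{\partial}\bigoplus_{[v]\in V(X)}\hat\Z\to\hat\Z\to 0 \]
in which $\partial$ is the signed incidence matrix of $X$. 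Connectedness of $X$ gives $\ker\partial\cong\hat\Z^{b_1(X)}$, and exactness identifies $\ker\partial$ with the image of $\delta$. Thus $b_1(X)$ is the torsion-free $\hat\Z$-rank of the image of $\delta\colon G^{\rm ab}\to\hat\Z^{|E(X)|}$.

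The main obstacle is to show that $\delta$, and in particular the rank of its image, is a profinite invariant of $G$ rather than merely of the pair $(G,S(\cal G))$. The natural approach is to describe $\delta$ intrinsically: having identified the indexing set of its codomain with the profinite-invariant set of conjugacy classes $[A]$ of maximal $\hat\Z^2$-subgroups, the $[A]$-component of $\delta$ should be an ``edge-crossing cocycle'' $G^{\rm ab}\to\hat\Z$ constructed from the $G$-action on the coset space $G/A$. Verifying that this group-theoretic construction recovers the Bass--Serre connecting homomorphism is the technical heart of the argument; granting it, both $|E(X)|$ and $b_1(X)$ are profinite invariants of $G$, and hence so is $|V(X)|$.
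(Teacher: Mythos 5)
Your treatment of the edge count is essentially the paper's argument: maximal $\hat\Z{}^2$-subgroups are elliptic by Proposition \ref{abelianactingacyl}, lie in peripheral subgroups by Lemma \ref{onlycusps}, are exactly the edge stabilisers by the malnormality of Lemma \ref{malnormalcusps}, and conjugacy classes of these correspond to edges of $X$ via acylindricity; together with $|V(X)|=|E(X)|-b_1(X)+1$ this part is fine.

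For the first Betti number, however, your argument has a genuine gap, and it is exactly the one you flag yourself. The Mayer--Vietoris sequence $G^{\rm ab}\stackrel{\delta}{\to}\bigoplus_{[e]}\hat\Z\stackrel{\partial}{\to}\bigoplus_{[v]}\hat\Z\to\hat\Z\to 0$ is constructed from the pair $(G,S({\cal G}))$, and in the totally hyperbolic setting nothing proved so far (in particular not Theorem \ref{decompfixed}) tells you that an isomorphism $\widehat{\pi_1 M}\iso\widehat{\pi_1 N}$ matches up the two standard trees or their quotient graphs of groups; the whole content of Theorem \ref{purehyp} is to extract numerical invariants without such a matching. So "the rank of the image of $\delta$" is, as you state it, an invariant of the splitting, not yet of $G$, and the proposed remedy --- an ``edge-crossing cocycle'' $G^{\rm ab}\to\hat\Z$ built from the coset space $G/A$ alone --- is not a construction that is known to exist: a coset space does not by itself determine a homomorphism to $\hat\Z$ (in the discrete case one needs the tree, or an almost-invariant set, or Poincar\'e duality to produce the intersection number with the torus, and none of these has been set up profinitely in your sketch). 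If you try to make the crossing cocycle precise you are led to pairing $g\in G$ against the image of the fundamental class of $A\iso\hat\Z{}^2$ in $H_2(G;\hat\Z)$, i.e.\ to the classes $\xi_T$ and to goodness of 3-manifold groups --- which is precisely the route the paper takes instead: it attaches to each conjugacy class of maximal $\hat\Z{}^2$-subgroups its class in $H_2(G;\hat\Z)\iso\hat\Z{}^{b_2(M)}$ (well defined up to units), observes that a family of JSJ tori is jointly non-separating in $M$ exactly when their classes span a subgroup of full rank, and that this rank is preserved under taking closures in $H_2(G;\hat\Z)$; the maximal size of such a family is $b_1(X)$. Until the invariance of $\delta$ (equivalently, of these intersection data) is actually established, your proof of $b_1(X)=b_1(Y)$ is not complete.
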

\begin{proof}
Note that by Proposition \ref{abelianactingacyl} and Lemma \ref{onlycusps}, the maximal copies of $\hat\Z{}^2$ in $G$ are precisely the conjugates of completions of the JSJ tori $T$ of $M$. Thus immediately the profinite completion determines $|E(X)|$ as the number of such conjugacy classes. The action of conjugation on the homology group $H_2(G;\hat\Z)$ being trivial, each such conjugacy class gives an element of $H_2(G;\hat\Z)$, the image of a generator of $H_2(T;\Z)$ under the maps
\[ H_2(T;\Z)\to H_2(M;\Z)=H_2(\pi_1 M;\Z)\to H_2(G;\hat\Z)\]
This element $\xi_T$ is well-defined up to multiplication by an invertible element of $\hat \Z$. Furthermore it is either primitive or zero since the classes in $H_2(M;\Z)$ have this property and
\[ H_2(G;\hat\Z) \iso \varprojlim H_2(G;\Z/n)\iso \varprojlim H_2(\pi_1 M;\Z/n) \iso \hat\Z{}^{b_2(M)} \] 
where the middle isomorphism holds since 3-manifold groups are good in the sense of Serre (this is proved by a combination of the Virtual Fibring Theorem of Agol and Theorem B of Wilton-Zalesskii \cite{WZ10}). See Proposition 6.5.7 of \cite{RZ00} for the inverse limit.

Notice that the first Betti number of the graph $X$ is equal to the maximal number of edges that can be removed without disconnecting the graph. On the level of the 3-manifold $M$, this equals the size of a maximal collection of JSJ tori that are together non-separating. A collection of these tori is non-separating if and only if the corresponding homology classes $\eta_{T_1},\ldots, \eta_{T_k}\in H_2(M;\Z)$ generate a subgroup of $H_2(M;\Z)$ of rank $k$. The closure of a subgroup $\Z^r$ of $H_2(M;\Z)\iso \Z^n$ in $H_2(G;\hat\Z)\iso\hat\Z{}^n$ is isomorphic to $\hat\Z{}^r$. This closure is precisely the closed subgroup generated by the $\zeta_{T_i}$, hence the profinite completion detects whether a collection of tori is non-separating, hence the Betti numbers of $X$ and $Y$ must be equal. This completes the proof.
\end{proof} 

\section{The pro-$p$ JSJ decomposition}\label{secproP}
Let $p$ be a prime. We have focussed thus far on the full profinite completion of a graph manifold group because of the good separability results which hold for all graph manifolds, and because of the good profinite rigidity properties of \SFS{}s. The pro-$p$ topology on a graph manifold or \SFS{} is in general rather poorly behaved; indeed most are not even residually $p$. In this section we will note that when the pro-$p$ topology is well-behaved, the arguments of the previous section still suffice to prove a pro-$p$ version of Theorem \ref{decompfixed}. First let us discuss the pro-$p$ topologies on \SFS{} groups. The following lemmas and proposition are well-known but are included for completeness.
\begin{lem}[Gr\"unberg \cite{Gru57}, Lemma 1.5]
Let $G$ be a group and suppose $H$ is a subnormal subgroup of $G$ with index a power of $p$. If $H$ is residually $p$, so is $G$.
\end{lem}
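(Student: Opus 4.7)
The plan is to reduce from the subnormal case to the normal case by induction along the subnormal chain, and then apply a standard normal-core argument.

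Since $H$ is subnormal in $G$ with $[G:H]$ a $p$-power, we may fix a subnormal chain $H=H_0\nsgp H_1\nsgp\cdots\nsgp H_n=G$. Each successive index $[H_i:H_{i-1}]$ divides $[G:H]$, hence is itself a $p$-power. By induction on $n$, it suffices to prove the statement under the stronger hypothesis that $H\nsgp G$ with $[G:H]=p^k<\infty$.

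Next I would dispose of the case $g\notin H$. Let $g\in G\smallsetminus\{1\}$. If $g$ is not in $H$ then its image in the finite $p$-group $G/H$ is non-trivial, so $H$ itself is a normal subgroup of $p$-power index in $G$ with $g\notin H$, and we are done. Otherwise $g\in H\smallsetminus\{1\}$; since $H$ is residually $p$, choose $N_0\nsgp[{\mathnormal p}] H$ with $g\notin N_0$.

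The remaining task is to replace $N_0$ by a subgroup of $H$ that is additionally normal in $G$ and still has $p$-power index. The obvious candidate is the normal core
\[ N=\bigcap_{x\in G} N_0^x. \]
Because $N_0\nsgp H$ and $H\nsgp G$, conjugation by any $h\in H$ fixes $N_0$, so the intersection is indexed by coset representatives of $H$ in $G$ and is therefore a \emph{finite} intersection. For each $x\in G$, conjugation by $x$ is an automorphism of $G$ preserving $H$, so $N_0^x\sbgp H$ and $N_0^x\nsgp H$ with $[H:N_0^x]=[H:N_0]$ a $p$-power. A finite intersection of subgroups of $p$-power index in $H$ has $p$-power index in $H$, hence $[G:N]$ is a $p$-power. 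By construction $N\nsgp G$ and $N\sbgp N_0$, so $g\notin N$. This exhibits the required normal subgroup, completing the inductive step and the proof.

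The only mildly technical step is the last one: verifying that all conjugates $N_0^x$ remain $p$-power-index subgroups of $H$, which is where normality of $H$ in $G$ (and hence invariance of $H$ under conjugation) is essential. Everything else is a routine combination of the pigeonhole observation for $g\notin H$ with the normal-core construction for $g\in H$.
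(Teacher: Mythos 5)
Your proof is correct. The paper gives no proof of this lemma --- it simply cites Gr\"unberg --- so there is no in-paper argument to compare against; your route (reduce along the subnormal chain to the normal case, then apply the normal-core construction) is the standard argument and is essentially Gr\"unberg's original one.

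One small point of precision: the principle you invoke, that a finite intersection of subgroups of $p$-power index in $H$ again has $p$-power index, is false for arbitrary subgroups --- two distinct subgroups of index $3$ in $S_3$ intersect trivially, and $6$ is not a power of $3$. What saves the step is exactly what you verified in the preceding sentence: each conjugate $N_0^x$ is \emph{normal} in $H$ of $p$-power index, so $H/N$ embeds into the finite $p$-group $\prod_{x} H/N_0^{x}$ (the product taken over coset representatives of $H$ in $G$), whence $[H:N]$, and therefore $[G:N]=[G:H]\,[H:N]$, is a power of $p$. With the principle stated for normal subgroups, the argument is complete as written.
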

\begin{lem}
Let $O$ be an orientable orbifold with non-positive Euler characteristic and such that each cone point of $O$ has order a power of $p$. Then $O$ has a subregular cover of degree a power of $p$ which is a surface. Hence $\ofg{O}$ is residually $p$.
\end{lem}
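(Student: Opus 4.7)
The plan is to exhibit a normal subgroup $N \triangleleft B := \ofg[O]$ of index a power of $p$ such that $N$ is torsion-free. The corresponding cover of $O$ is then unramified at every cone point (since $N$ meets each cyclic subgroup $\langle a_i\rangle$ trivially) and is therefore a surface; being normal, it is in particular subregular. For the second sentence, surface groups are residually $p$ (a classical fact, whether the surface is closed or has boundary), so by the Gr\"unberg lemma quoted immediately above, applied to $N \triangleleft B$, the group $B$ itself is residually $p$.

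To produce such an $N$ I would construct a surjection $\phi\colon B \twoheadrightarrow P$ onto a finite $p$-group $P$ that is injective on every cyclic subgroup $\langle a_i \rangle$, and take $N = \ker\phi$. Such an $N$ is automatically torsion-free, since in an orientable 2-orbifold group of this presentation every torsion element is conjugate into some $\langle a_i\rangle$. If $O$ has non-empty boundary, $B$ is a free product of the cyclic groups $\mathbb{Z}/p^{a_i}$ with a free group, and the projection to $\bigoplus_i \mathbb{Z}/p^{a_i}$ does the job immediately. If $O$ is closed, the product relation $a_1 \cdots a_r [u_1,v_1]\cdots[u_g,v_g] = 1$ must also be accommodated. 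When $g \ge 1$, or when $g = 0$ and at least two cone orders attain the maximum $p^{a_{\max}}$, the maximal $p$-primary quotient of $B^{\mathrm{ab}}$ already provides $\phi$: the extra freedom from $u_j,v_j$ (if $g\ge 1$) or the multiplicity of the dominant order (if $g=0$) suffices to balance the relation $\sum \bar a_i = 0$ without collapsing any $\bar a_i$ to lower order.

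The main obstacle is the remaining sub-case: closed $O$ with $g = 0$ and a strictly dominant cone order, essentially a triangle group $T(p^a, p^b, p^c)$ with $a \le b < c$. Here the abelianisation forces the image of $a_3$ down to order $p^b$ rather than $p^c$, so one must map to a non-abelian finite $p$-group instead. Concretely, I would verify that for every $p$-power triple $(p^a, p^b, p^c)$ satisfying $1/p^a + 1/p^b + 1/p^c \le 1$ (i.e.\ the admissible signatures with $\chi(O) \leq 0$), one can exhibit elements $x, y, z$ of orders exactly $p^a, p^b, p^c$ with $xyz = 1$ inside some nilpotent $p$-group---for instance a semidihedral or generalised quaternion group when $p = 2$, or a central extension of an abelian $p$-group by $\mathbb{Z}/p^c$ in general. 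Once these elements are exhibited, the universal property of $B$ provides the required $\phi$ and the lemma follows.
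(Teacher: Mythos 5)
Your reduction of the problem to the genus-zero ``triangle'' case is where the argument breaks. In \emph{any} abelian quotient of $B$ the commutators $[u_j,v_j]$ vanish, so the long relation forces $\sum_i\phi(a_i)=0$ no matter what $g$ is: the generators $u_j,v_j$ give no extra freedom whatsoever in the maximal $p$-quotient of $B^{\mathrm{ab}}$. Consequently, whenever one cone order $p^{n_m}$ strictly dominates the others, the image of $a_m$ satisfies $\phi(a_m)=-\sum_{i\neq m}\phi(a_i)$ and hence has order at most $p^{n_{m-1}}$, and if there is a single cone point (which forces $g\ge 1$ since $\chi\le 0$) its image is trivial. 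So your claim that for $g\ge 1$ the abelianisation ``already provides $\phi$'' is false, and the hard case is not confined to genus-zero orbifolds with three cone points: every closed $O$ with a strictly dominant cone order needs a non-abelian target, in which the dominant generator must be hit with its exact order by a product of the images of the other $a_i$ and of $g$ commutators. That realisation statement is exactly the crux, and in your write-up it is only asserted (``I would verify\ldots''); the semidihedral/quaternion examples settle a handful of $p=2$ triples, and the commutator case is not addressed at all. (The needed facts do appear to be true --- e.g.\ unitriangular groups $U_n(\mathbb{F}_p)$ supply elements $x,y$ of order $p^a,p^b$ with $xy$ of prescribed exact order $p^c$, and the Heisenberg group over $\Z/p^n$ exhibits an element of exact order $p^n$ as a commutator --- but this is a genuine construction you must carry out, not a routine check.)

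The paper sidesteps all of this by not asking for a regular cover. It uses precisely your abelian map $\phi\colon B\to\bigl(\prod_i\Z/p^{n_i}\bigr)/(1,\ldots,1)$, observes that its kernel is torsion-free when at least two cone points attain the maximal order, and that otherwise the kernel is again an orbifold group whose maximal cone order has strictly decreased (the surviving torsion consists of conjugates of proper powers of $a_m$), with the single-cone-point case handled first by a degree-$p$ cover having $p$ cone points; iterating yields a \emph{subnormal} torsion-free subgroup of $p$-power index. This is exactly why the lemma claims only a subregular cover and why Gr\"unberg's lemma is quoted for subnormal subgroups. Your route, if completed, would prove the stronger statement that a regular surface cover of $p$-power degree exists, but as written the key cases are either handled by an incorrect step (the abelianisation claim for $g\ge1$) or left unproved (exact-order realisation in finite $p$-groups).
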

\begin{proof}
We will suppose that $O$ has no boundary, as this is the more difficult case. We will construct a normal subgroup of $\ofg{O}$ which is in some sense simpler than $\ofg{O}$; iterating this process will terminate in a subregular subgroup which is torsion-free, hence a surface group. A presentation for $\ofg{O}$ is
\[ \big< a_1,\ldots, a_m, u_1,v_1,\ldots, u_g,v_g\,\big|\, a_i^{p^{n_i}} = 1, a_1\cdots a_m[u_1,v_1]\cdots [u_g,v_g]=1\big>\]
where the cone points have order $p^{n_1}\leq p^{n_2}\leq\cdots\leq p^{n_m}$. Consider the abelian $p$-group
\[ A = \Z/(p^{n_1})\times\cdots \times\Z/(p^{n_m}) \,/\, (1,1,\ldots,1)  \]
and the map $\phi:\ofg{O}\to A$ sending each generator $a_i$ to the $i^{\rm th}$ coordinate vector and sending $u_j,v_j$ to 0. Let $K$ be the kernel of $\phi$. Then if there are at least two cone points of maximal order $p^{n_m}$, no power of any $a_i$ (other than the identity) lies in the kernel of $\phi$ since no multiple of $(1,1,\ldots,1)$ has precisely one non-zero coordinate; hence $K$ is torsion-free as required. 

If there is only one cone point, then since the Euler characteristic is non-positive, $g>0$ and we may pass to a regular cover of degree $p$ with $p$ cone points.

 Otherwise, if $n_{m-1}<n_m$ and there are at least two cone points, then any torsion elements in $K$ are conjugates of $a_m^{p^j}$ for $j>0$, so that their order is at most $p^{n_m-1}$; thus the order of the maximal cone point of the cover corresponding to $K$ is strictly smaller than that of $O$. Iterating this process will thus eliminate all cone points of $O$, giving the required subnormal subgroup.
\end{proof}
\begin{prop}\label{respSFS}
Let $p$ be a prime. Let $M$ be a \SFS{} which is not of geometry $\sph{3}$ or $\sph{2}\times\R$. Then $\pi_1 M$ is residually $p$ if and only if all exceptional fibres of $M$ have order a power of $p$, and $M$ has orientable base orbifold when $p\neq2$. That is, $M$ has residually $p$ fundamental group precisely when its base orbifold $O$ is $\Z/p$-orientable and has residually $p$ fundamental group. Moreover, when this holds the following sequence is exact:
\[1\to\Z[p]\to\proP{\pi_1 M}\to\proP{\ofg{O}}\to 1 \]
where \Z[p] is generated by a regular fibre of $M$.
\end{prop}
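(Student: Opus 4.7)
I plan to treat necessity, sufficiency, and the pro-$p$ short exact sequence in turn.

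For \emph{necessity}, suppose $\pi_1 M$ is residually $p$. The orientability condition for odd $p$ follows from an automorphism argument: in the non-orientable case, some element $u\in\pi_1 M$ satisfies $uhu^{-1}=h^{-1}$. In any finite $p$-group quotient $Q$, the image $\bar u$ has $p$-power order, and its conjugation action on the cyclic $p$-group $\overline{\langle h\rangle}$ is an element of $(\Z/p^k)^\times$ also of $p$-power order; but for odd $p$, $-1\in(\Z/p^k)^\times$ has order $2$, not a power of $p$, so the relation $\bar u\bar h\bar u^{-1}=\bar h^{-1}$ forces $\bar h^{-1}=\bar h$, whence $\bar h^2=1$ and so $\bar h=1$. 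Since $h$ is killed in every $p$-group quotient, $h$ cannot be separated from $1$, contradicting residual $p$-ness. For the cone-point condition, suppose some $p_i$ has a prime factor $q\neq p$. Since $p_i$ is invertible in any $p$-group quotient $Q$, the relation $\bar a_i^{p_i}=\bar h^{-q_i}$ determines $\bar a_i$ as a specific power of $\bar h$; hence $\bar a_i$ is central in $Q$ (using that $\bar h$ is central, or in the non-orientable case central in an index-$2$ subgroup containing the $\bar a_i$). In particular $[\bar u,\bar a_i]=1$ in every $p$-group quotient for any other generator $\bar u$. If $\pi_1 M$ were residually $p$, this would force $[u,a_i]=1$ in $\pi_1 M$ itself, making $a_i$ central. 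But for any $M$ not of geometry $\sph{3}$ or $\sph{2}\times\R$, the orbifold group $\ofg{O}$ is non-abelian with non-central cone-point elements, a contradiction.

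For \emph{sufficiency}, the preceding lemma gives a regular subcover $\Sigma\to O$ of $p$-power degree with $\Sigma$ a surface, hence a subnormal subgroup $\pi_1\Sigma\leq\ofg{O}$ of $p$-power index. Lifting this cover to $M$ yields a subnormal subgroup $\pi_1\tilde M\leq\pi_1 M$ of $p$-power index where $\tilde M$ is a circle bundle over $\Sigma$, so $\pi_1\tilde M$ sits in a central extension $1\to\langle h\rangle\iso\Z\to\pi_1\tilde M\to\pi_1\Sigma\to 1$. Since $\langle h\rangle$ is central, each $\langle h^{p^n}\rangle$ is normal in $\pi_1\tilde M$, and the quotient $\pi_1\tilde M/\langle h^{p^n}\rangle$ is a central $\Z/p^n$-extension of $\pi_1\Sigma$. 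Using that surface groups are residually $p$ and cohomologically pro-$p$ good, the extension class in $H^2(\pi_1\Sigma;\Z/p^n)$ is pulled back from a finite $p$-group quotient of $\pi_1\Sigma$, producing a finite $p$-group quotient of $\pi_1\tilde M/\langle h^{p^n}\rangle$ into which the central $\Z/p^n$ factor injects. Combined with $\bigcap_n\langle h^{p^n}\rangle=\{1\}$, this yields residual $p$-ness of $\pi_1\tilde M$. Gr\"unberg's lemma then transfers this to $\pi_1 M$.

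For the \emph{pro-$p$ short exact sequence}, the surjection $\proP{\pi_1 M}\to\proP{\ofg{O}}$ is induced by the discrete surjection. Residual $p$-ness of $\pi_1 M$ ensures $h$ has infinite order in $\proP{\pi_1 M}$, so the closure of $\langle h\rangle$ is procyclic pro-$p$, isomorphic to $\Z[p]$. This closure is exactly the kernel of the projection, as follows from $p$-separability of $\langle h\rangle$ in $\pi_1 M$, which is deduced by lifting $p$-group quotients of $\ofg{O}$ (whose residual $p$-ness follows from the conditions). The most delicate step is the forward direction for cone points, requiring a case analysis of Fuchsian group structure to confirm that no $a_i$ is central in any of the geometric cases not explicitly excluded.
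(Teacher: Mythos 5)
Your overall route coincides with the paper's in all three parts: necessity via obstructions in finite $p$-group quotients, sufficiency by passing to a subnormal cover of $p$-power index with orientable surface base and using cohomological $p$-goodness of surface groups on the central extensions by $\Z/p^n$, followed by Gr\"unberg's lemma; and the exact sequence is extracted from the same family of quotients (your observation that the closure of $\gp{h}$ is an infinite procyclic pro-$p$ group, hence a copy of \Z[p], is a clean way to finish that part). However, one step fails as written: the cone-point half of necessity. You suppose some $p_i$ has a prime factor $q\neq p$ and assert that ``$p_i$ is invertible in any $p$-group quotient''; this is true only when $p\nmid p_i$, so your argument does not exclude cone points of order $p^n m$ with $n\geq 1$ and $m>1$ prime to $p$ (for instance order $2p$), which the proposition must also forbid. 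When $p\mid p_i$ the map $x\mapsto x^{p_i}$ is far from injective on cyclic $p$-groups, and the relation $\bar a_i^{\,p_i}=\bar h^{-q_i}$ does not place $\bar a_i$ inside $\gp{\bar h}$. The repair is exactly the paper's device: write $p_i=p^n m$ with $(m,p)=1$ and set $b=a_i^{p^n}$; then $b^m\in\gp{h}$, and in any finite $p$-group quotient the image of $b$, having $p$-power order coprime to $m$, is a power of the image of $b^m$ and hence lies in $\gp{\bar h}$. Residual $p$-ness would then force $b$ to centralise (at least the canonical index-two subgroup of) $\pi_1 M$, which is impossible because $b$ projects to a nontrivial, non-central torsion element of the base orbifold group --- the same non-centrality check you already flagged for your version.

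Everything else is sound and parallel to the paper. Your odd-$p$ orientability obstruction is the paper's argument in different packaging (you rule out inversion of $\bar h$ because it would be an automorphism of order $2$ induced by an element of $p$-power order; the paper notes $\bar g\in\gp{\bar g^2}$). Your sufficiency argument, including pulling the extension class back from a finite $p$-group quotient of $\pi_1\Sigma$ and using $\bigcap_n\gp{h^{p^n}}=1$, is the paper's proof spelled out one step further. For the exact sequence, identifying the kernel of $\proP{\pi_1 M}\to\proP{\ofg{O}}$ with the closure of $\gp{h}$ needs no separability input --- it is automatic from right-exactness of pro-$p$ completion together with normality of $\gp{h}$ --- though the $p$-separability you invoke is true and does no harm. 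With the cone-point step repaired as above, your proof is essentially the paper's.
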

\begin{proof}
$(\Leftarrow)$ By the above, such base orbifolds have a subregular cover of degree a power of $p$ which is an orientable surface. Using this subregular cover we need only verify that if the base orbifold of $M$ is an orientable surface $\Sigma$ then the \SFS{} group is residually $p$. Since surface groups and free groups are $p$-good, for each $n$ the map \[H^2(\proP{\pi_1 \Sigma};\Z/p^n)\to H^2(\pi_1 \Sigma;\Z/p^n)\] is an isomorphism; hence the central extension
\[1\to \Z/p^n\to \pi_1(M)/<\!h^{p^n}\!\!> \to \pi_1 \Sigma\to 1\]
(where $h$ represents a regular fibre of $M$) is the pullback of a central extension of \proP{\pi_1 \Sigma} by $\Z/p^n$; whence each quotient $\pi_1(M)/<\!h^{p^n}\!\!>$ is residually $p$. This proves that $\pi_1 M$ is residually $p$ and, moreover, that the canonical fibre subgroup is closed in the pro-$p$ topology, so that we have the exact sequence as claimed. 

$(\Rightarrow)$ Recall that a subgroup of a residually $p$ group is residually $p$. Suppose first that $O$ is non-orientable and $p\neq 2$. Then the subgroup of $\pi_1 M$ generated by $g$ and $h$ is $\Z\rtimes\!<\!g\!>$ where the copy of \Z[] is the fibre subgroup $<\!h\!>$ and $g$ acts by inversion. Then $g^2$ acts trivially on \Z; but in any finite $p$-group quotient of $\pi_1 M$, the subgroup generated by $g^2$ contains $g$, so the image of $[g,h]$ vanishes in any $p$-group quotient; so $\pi_1 M$ is not residually $p$.

Now let $p$ be arbitrary and suppose some exceptional fibre $a$ has order $p^n m$ where $m\neq 1$ is coprime to $p$. Then if $b=a^{p^n}$, in any finite $p$-group quotient the image of $b$ is some power of the image of $b^m$, which is central; so $b$ is central in any finite $p$-group quotient of $\pi_1 M$. So if $\pi_1 M$ were residually $p$, the centre of $\pi_1 M$ would contain $b$, which it does not. So all exceptional fibres of $M$ have order $p^n$ for some $n$. 
\end{proof}
The pro-$p$ completion of course does not determine the \SFS{} quite as strongly as the profinite completion. However once the fundamental group is residually $p$ the techniques of \cite{BCR14} and \cite{Wilkes15}, combined with the above proposition, yield the following surprisingly strong results:
\begin{theorem}
Let $O_1$, $O_2$ be 2-orbifolds whose fundamental groups are residually $p$. If $\proP{\ofg{O_1}}\iso\proP{\ofg{O_2}}$ then $O_1\iso O_2$.
\end{theorem}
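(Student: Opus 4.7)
The plan is to recover the topological classification data of $O_i$---genus, orientability, number of boundary components, and multiset of cone-point orders---from the pro-$p$ completion $B_i = \proP{\ofg[(O_i)]}$, after which the classification of $2$-orbifolds gives $O_1 \iso O_2$.

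First I would recover the cone points. By Proposition~\ref{respSFS}, every cone point of $O_i$ has order a power of $p$, so each gives a finite cyclic subgroup $\langle a_j \rangle \leq \ofg[(O_i)]$ of order $p^{n_j}$. Since $\ofg[(O_i)]$ is residually $p$ it injects into $B_i$, and these cyclic subgroups persist as maximal finite subgroups of $B_i$ whose conjugacy classes in $B_i$ are in bijection with those in $\ofg[(O_i)]$---this is the pro-$p$ analogue of the torsion arguments in Section~6 of \cite{Wilkes15}, the key input being the good behaviour of torsion in pro-$p$ completions of virtually torsion-free groups, combined with the Fuchsian structure of $\ofg[(O_i)]$. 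Thus from $B_i$ alone one reads off $m$ and the multiset $\{p^{n_1},\ldots,p^{n_m}\}$.

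Second, I would extract the orientation type together with the Euler characteristic and the genus. For $p$ odd, Proposition~\ref{respSFS} forces orientability outright. For $p = 2$, one identifies a canonical orientation subgroup $B_i^+ \nsgp[o] B_i$ of index at most $2$ group-theoretically---for instance via the mod-$2$ abelianisation together with the requirement that each cone-point subgroup lies in $B_i^+$---thereby reducing to the orientable case. Once in the orientable case, the dimensions of the $\mathbb{F}_p$-cohomology of $B_i$ together with the cone-point data determine $\oec[(O_i)]$, and the dichotomy ``closed versus bounded'' is detected by whether $B_i$ is a pro-$p$ Poincar\'e duality group of dimension $2$ (so $H^2(B_i;\mathbb{F}_p) \neq 0$) or is, modulo its torsion, a free pro-$p$ product of pro-$p$ cyclic groups (so cohomologically one-dimensional after killing the finite factors). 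In the closed case, $\oec[(O_i)]$ directly yields the genus; in the bounded case, one additionally uses the free pro-$p$ product decomposition (along the lines of \cite{BCR14}) together with the peripheral data to recover $g$ and $s$.

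The classification of $2$-orbifolds then assembles all this data into an isomorphism $O_1 \iso O_2$. The main obstacle is the $p=2$ step: singling out the orientation subgroup purely group-theoretically from among the possibly many index-$2$ subgroups of $B_i$ requires a canonical characterisation, and I expect this to be the one place where the argument genuinely goes beyond a routine pro-$p$ translation of the profinite techniques of \cite{BCR14} and \cite{Wilkes15} that handle the rest.
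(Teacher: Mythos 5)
The paper does not actually write out a proof of this theorem: it is dispatched in one sentence as a consequence of Proposition \ref{respSFS} together with ``the techniques of \cite{BCR14} and \cite{Wilkes15}'', and your overall plan --- reconstructing the classification data of the orbifold from $\proP{\ofg[O]}$ --- is indeed the intended strategy. However, two of your steps genuinely fail. First, the bounded case: the pro-$p$ completion alone cannot recover the genus and the number of boundary components, because free pro-$p$ groups of equal rank are isomorphic. A once-punctured torus and a thrice-punctured sphere have isomorphic (free, residually $p$) fundamental groups, hence isomorphic pro-$p$ completions, but are non-isomorphic orbifolds; with torsion, an annulus and a M\"obius band each carrying one cone point of order $p$ do the same. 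Your appeal to ``the peripheral data'' is circular: no peripheral structure is part of the hypothesis (contrast Theorem \ref{Wilkeswbdy}, where the isomorphism is one of group systems including the boundary subgroups). So the theorem has to be read for \emph{closed} $2$-orbifolds, consistent with its role alongside the subsequent statement about closed Seifert fibre spaces; the bounded case is not a gap to be filled but a statement that is false, and your sketch should have flagged this rather than attempted it.

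Second, your claim that for odd $p$ Proposition \ref{respSFS} ``forces orientability outright'' is wrong: that proposition concerns Seifert fibre space groups, where it is the fibre element that forces an orientable base; it says nothing about bare orbifold groups. Closed non-orientable surfaces of genus at least $4$ have residually free, hence residually $p$, fundamental groups for every prime, so non-orientability must be confronted at all $p$, not only at $p=2$. Worse, for odd $p$ such a group $G$ has $H^2(G;\mathbb{F}_p)=0$, and since $H^2(\proP{G};\mathbb{F}_p)$ always injects into $H^2(G;\mathbb{F}_p)$ (a splitting of the pulled-back extension extends to the pro-$p$ completion by its universal property), the completion $\proP{G}$ is a \emph{free} pro-$p$ group; so your proposed closed-versus-bounded dichotomy (pro-$p$ $\mathrm{PD}(2)$ versus free pro-$p$ product of procyclic groups) would misclassify these closed orbifolds as bounded. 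These cases can be separated by other means (rank and torsion of the abelianisation, Demushkin-type invariants), but not by the argument as written. Finally, the step carrying the real weight --- that conjugacy classes of maximal finite subgroups of $\proP{\ofg[O]}$ biject with the cone points --- is asserted via ``good behaviour of torsion in pro-$p$ completions of virtually torsion-free groups'', which is not a general fact and is precisely the \cite{BCR14}/\cite{Wilkes15}-style input the paper is alluding to; it needs an actual argument (for instance via cohomological $p$-goodness, using the subregular surface cover of $p$-power index supplied by the lemma preceding Proposition \ref{respSFS}, or via pro-$p$ trees), though in fairness the paper itself offers no more detail on this point.
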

\begin{theorem}
Let $M_1$, $M_2$ be \SFS{}s whose fundamental groups are residually $p$. If $\proP{\pi_1 M_1}\iso\proP{\pi_1 M_2}$ then:
\begin{itemize}
\item $M_1$, $M_2$ have the same geometry;
\item $M_1$, $M_2$ have the same base orbifold; and
\item the Euler numbers of $M_1$, $M_2$ have equal $p$-adic norm.
\end{itemize}
\end{theorem}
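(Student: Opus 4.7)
The plan is to port the profinite-completion analysis of Seifert fibre spaces from \cite{Wilkes15} (especially the framework of Theorem \ref{Wilkeswbdy}) into the pro-$p$ setting, with Proposition \ref{respSFS} supplying the essential structural input: the short exact sequence
\[ 1 \to F \to G \to \proP{\ofg[O]} \to 1 \]
where $F \iso \Z[p]$ is the closure of the regular fibre and $G = \proP{\pi_1 M}$.

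The first step is to show that any isomorphism $\Phi \colon \proP{\pi_1 M_1} \to \proP{\pi_1 M_2}$ must carry $F_1$ onto $F_2$. In the principal case (base orbifold of negative Euler characteristic) this should follow by identifying $F_i$ intrinsically as the unique maximal normal procyclic subgroup of $G_i$, which is the pro-$p$ analogue of Theorem 6.4 of \cite{Wilkes15}. The Euclidean and Nil cases (and, for $p=2$, the Klein-bottle-type nuances that Proposition \ref{respSFS} still permits) can be disposed of by direct inspection of explicit finite presentations. Having identified the fibres, $\Phi$ restricts on $F$ to multiplication by a unit $\lambda \in \Z[p]^{\times}$ and descends to an isomorphism $\bar\Phi \colon \proP{\ofg[O_1]} \to \proP{\ofg[O_2]}$. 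The preceding pro-$p$ orbifold-rigidity theorem then forces $O_1 \iso O_2$, giving the second bullet.

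With $O$ now common to both $M_1$ and $M_2$, the sign of $\chi(O)$ is fixed, and the only remaining ambiguity among Thurston's Seifert geometries (e.g.\ $\E^3$ vs.\ Nil, or \HR\ vs.\ \SLR) is whether $e(M_i) = 0$ or $e(M_i) \neq 0$. This is equivalent to virtual splitting of the displayed central extension, which is a purely group-theoretic condition on $G_i$; hence the first bullet is a consequence of the third.

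For the third bullet, I would classify the central extensions by cohomology classes $\zeta_i \in H^2(\proP{\ofg[O]};\Z[p])$ and chase the induced isomorphism of short exact sequences (as in the remark after Theorem \ref{Wilkeswbdy}, and substantially simpler here since $O$ is closed), arriving at
\[ \mu\,\zeta_2 = \lambda\,\zeta_1, \]
where $\mu$ records the action of $\bar\Phi$ on $H^2$. Decomposing $H^2(\proP{\ofg[O]};\Z[p])$ into a torsion summand (recording the cone-point fibre invariants) and a rank-one torsion-free $\Z[p]$-summand (whose $\zeta_i$-coordinate encodes $e(M_i)$ in $\Z[p]$), multiplication by the unit $\lambda\mu^{-1}$ preserves the $p$-adic valuation on the torsion-free part, yielding $|e(M_1)|_p = |e(M_2)|_p$. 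The main technical obstacle I foresee is verifying that $\mu$ really is a unit on that rank-one summand: in \cite{Wilkes15} this sort of fact was packaged with `exotic automorphism' data tied to boundary components, which is absent in the closed setting, so one must instead argue directly from the bare fact that $\bar\Phi$ is an isomorphism that its induced map on the torsion-free quotient of $H^2(\proP{\ofg[O]};\Z[p])$ is $\Z[p]$-linear and invertible, hence multiplication by a unit on a rank-one summand.
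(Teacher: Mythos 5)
Your proposal is correct and takes essentially the same route the paper intends: the paper gives no written proof of this theorem, asserting only that it follows from Proposition \ref{respSFS} combined with the techniques of \cite{BCR14} and \cite{Wilkes15}, and your sketch (identify and preserve the fibre subgroup, descend to $\proP{\ofg[O]}$, invoke the preceding pro-$p$ orbifold rigidity theorem, then compare the central extension classes in $H^2(\proP{\ofg[O]};\Z[p])$ up to units to control the geometry and the $p$-adic norm of the Euler number) is exactly that port. The technical points you defer --- the intrinsic characterisation of the fibre in the pro-$p$ completion and the $p$-goodness needed for the $H^2$ classification --- are precisely what the cited techniques are meant to supply.
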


Moving on to graph manifolds, we must define what constitutes a `nice' pro-$p$ topology for a graph manifold group. A primary tool we used in Section \ref{secJSJ} was the efficiency of the graph of groups representing a graph manifold group. There is a precisely analogous property in the pro-$p$ world:
\begin{defn}
A graph of discrete groups $(X,\cal G)$ is {\em $p$-efficient} if $\pi_1(X,\cal G)$ is residually $p$, each group ${\cal G}_x$ is closed in the pro-$p$ topology on $\pi_1(X,\cal G)$, and $\pi_1(X,\cal G)$ induces the full pro-$p$ topology on each ${\cal G}_x$.
\end{defn}
Needless to say, this property does not hold for the majority of graph manifolds; in particular all the Seifert fibred pieces must be of the form specified in Proposition \ref{respSFS}. However, the study of graph manifolds whose JSJ decomposition is $p$-efficient is by no means vacuous, as shown by the following theorem:
\begin{thmquote}[Aschenbrenner and Friedl \cite{AF13}, Proposition 5.2]
Let $M$ be a closed graph manifold and let $p$ be a prime. Then $M$ has some finite-sheeted cover with $p$-efficient JSJ decomposition.
\end{thmquote}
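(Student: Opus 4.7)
The plan is to argue in two stages: first pass to a cover whose Seifert pieces are as simple as possible, and then refine further to achieve the separability conditions built into $p$-efficiency. For the first stage I would invoke a classical covering result (going back to work of Luecke and others) that every closed graph manifold admits a finite-sheeted cover $M'$ in which every JSJ piece is a trivial circle bundle $\Sigma_g\times\sph{1}$ with $g\geq 2$. In such a cover every vertex group is $\pi_1(\Sigma_g)\times\Z$, which is residually $p$ for every prime $p$ because surface groups are residually nilpotent (hence residually $p$) and direct products of residually $p$ groups are residually $p$. Every edge group is a copy of $\Z^2$. By Proposition \ref{respSFS}, each piece in isolation has a well-behaved pro-$p$ completion realised as a central $\Z[p]$-extension of the surface group $\proP{\pi_1 \Sigma_g}$.

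For the second stage, I would pass to a further finite cover designed to align the gluings of adjacent pieces across JSJ tori with a chosen pro-$p$ filtration on each vertex group. The attaching map across each JSJ torus is encoded by a matrix in $GL_2(\Z)$, and the obstruction to condition~(c) of the $p$-efficiency definition is that this matrix need not respect the mod-$p$ lower central filtrations of the two adjacent vertex groups. Taking the cover corresponding to a sufficiently deep term of the mod-$p$ lower central series of $\pi_1 M'$ and then a Gr\"unberg-style argument using the first lemma quoted in this section preserves residual $p$-ness, while the induced gluings now respect the filtration to the required depth.

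The main obstacle is verifying condition~(c) of $p$-efficiency, namely that the ambient pro-$p$ topology restricts to the full pro-$p$ topology on each vertex group $V$. Concretely, for every $p$-power-index normal subgroup $U\nsgp V$ one must produce a $p$-power-index normal subgroup $K\nsgp\pi_1 M'$ with $K\cap V\leq U$; this is a genuine pro-$p$ analogue of LERF, which graph manifold groups do not enjoy in general. The saving feature, once every piece has been brought to the form $\Sigma_g\times\sph{1}$, is that the fibre class is a direct summand of the abelianisation of each piece, which permits one to build $K$ explicitly by assembling mod-$p^n$ quotients of the vertex groups along the prescribed $GL_2(\Z)$ gluing matrices on the JSJ tori. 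The delicate task is to arrange these constructions simultaneously at every vertex of the JSJ graph in a mutually compatible way, and it is this bookkeeping—choosing a single cover which works uniformly at every piece and every edge torus—that will occupy most of the technical work.
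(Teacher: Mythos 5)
This statement is not proved in the paper at all: it is quoted verbatim from Aschenbrenner--Friedl \cite{AF13}, so there is no internal argument to compare yours against, and your proposal must stand on its own. Your first stage is fine: it is a standard covering result that a closed graph manifold has a finite-sheeted cover in which every JSJ piece is a product $\Sigma_g\times\sph{1}$ with $g\geq 2$, and each such piece is then residually $p$ with the exact sequence of Proposition \ref{respSFS}.

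The gap is that the entire content of the theorem lies in your second stage, which as written does not work and is in any case left unproved. Passing to the cover determined by a deep term of the mod-$p$ lower central series of $\pi_1 M'$ does give a normal subgroup of $p$-power index, but nothing forces its intersection with a vertex group $V$ to be deep in $V$'s own mod-$p$ filtration: the mod-$p$ lower central series of the amalgam can stabilise on $V$ far above the pro-$p$ topology of $V$, and this is exactly the phenomenon responsible for graph manifold groups failing, in general, to be residually $p$ or to satisfy any pro-$p$ analogue of LERF. Your appeal to Gr\"unberg's lemma is also circular: to apply it you need a residually-$p$ subnormal subgroup of $p$-power index in $\pi_1 M'$, which is essentially what is being constructed. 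Finally, the claim that because the fibre class is a direct summand of the abelianisation one can ``assemble mod-$p^n$ quotients of the vertex groups along the gluing matrices'' is precisely where the real proof lives: the gluing matrices interchange fibre and base directions, so one must construct filtrations of all vertex groups simultaneously, compatible along every JSJ torus, whose finite $p$-quotients glue to $p$-group quotients of the whole group witnessing all three conditions of $p$-efficiency (residual $p$-ness, closedness of each piece, and the induced full pro-$p$ topology on each piece). This is not bookkeeping; it is the technical heart of Aschenbrenner and Friedl's argument, carried out there via a careful machinery of compatible (boundary-respecting) filtrations and further passage to covers, and your proposal does not supply it.
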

Notice that the profinite properties of graph manifolds used in Section \ref{secJSJ} to prove Theorem \ref{decompfixed} were:
\begin{itemize}
\item efficiency of the JSJ decomposition;
\item Lemma \ref{orbbdy} concerning intersections of boundary components; and
\item Proposition \ref{bdycptsaremax} concerning maximality of peripheral subgroups.
\end{itemize}
For $p$-efficient graph manifolds, observe that in the proofs of the above propositions it suffices to use only regular covers of order a power of $p$ and $p$-group quotients; and observe that the remainder of the results are applications of the above together with the theory of profinite groups acting on profinite trees, which works just as well (in fact better) in the category of pro-$p$ groups. Hence the same arguments prove the following pro-$p$ version of Theorem \ref{decompfixed}: 
\begin{theorem}\label{proPdecompfixed}
Let $M$, $N$ be closed graph manifolds with $p$-efficient JSJ decompositions $(X,M_\bullet)$, $(Y,N_\bullet)$ respectively. Assume that there is an isomorphism $\Phi:\proP{\pi_1 M}\to \proP{\pi_1 N}$. Then, after possibly performing an automorphism of $\proP{\pi_1 N}$, the isomorphism $\Phi$ induces an isomorphism of JSJ decompositions in the following sense:
\begin{itemize}
\item there is an (unoriented) graph isomorphism $\varphi:X\to Y$;
\item $\Phi$ restricts to an isomorphism $\proP{\pi_1 M_x}\to \proP{\pi_1 N_{\varphi(x)}}$ for every $x\in V(X)\cup E(X)$.
\end{itemize}
\end{theorem}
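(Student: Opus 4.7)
My plan is to run the entire argument of Theorem \ref{decompfixed} verbatim in the pro-$p$ setting, verifying at each step that the three profinite ingredients singled out by the author --- efficiency of the JSJ decomposition, the boundary-intersection Lemma \ref{orbbdy}, and the maximality Proposition \ref{bdycptsaremax} --- go through with `profinite' replaced by `pro-$p$', and that the Bass--Serre-theoretic machinery of Section \ref{secAcylindrical} applies. The $p$-efficiency hypothesis is precisely what turns $(X, {\cal G})$ into an injective graph of pro-$p$ groups, with $\Pi_1(X,\proP{{\cal G}}) \iso \proP{\pi_1(X,{\cal G})}$ and a well-defined standard pro-$p$ tree $S({\cal G})$ on which $\proP{\pi_1 M}$ acts with stabilisers the pro-$p$ completions of the vertex and edge groups. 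All the abstract theorems on groups acting on profinite trees used in Section \ref{secJSJ} (Propositions \ref{abelianactingontree}, \ref{abelianactingacyl}, \ref{powersfix}, \ref{overgroupsfixvertices}, Theorem \ref{fixedptsubtree}, etc.) have pro-$p$ analogues; indeed, as the author notes, these work at least as well for pro-$p$ groups acting on pro-$p$ trees (see e.g.~\cite{RZ00p}).

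Next I would establish the three ingredients in the pro-$p$ category. First, the boundary-intersection Lemma \ref{orbbdy}: given a residually $p$ hyperbolic $2$-orbifold $O$ whose cone orders are powers of $p$ (which holds by Proposition \ref{respSFS} in the $p$-efficient setting), pass to a subregular $p$-power cover $O'$ with at least two boundary components, giving a free-product decomposition of the fundamental group of $O'$ whose pro-$p$ completion is a free pro-$p$ product with the peripheral subgroups as free factors; the action on the standard pro-$p$ tree has trivial edge stabilisers, so the same 0-acylindrical argument shows the intersections of conjugate peripheral pro-$p$ subgroups are trivial unless they coincide. Lemma 3.6 of \cite{RZ96}, used at the end of the proof, has a pro-$p$ analogue for virtually free pro-$p$ groups. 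Second, Proposition \ref{bdycptsaremax}: the explicit constructions there produce, for each $p$, a finite $p$-group quotient in which the image of $c$ is not a $p^{\rm th}$ power (the Heisenberg group constructions modulo $p$ or $4$ are already $p$-groups); this is exactly what is needed for the pro-$p$ version. Third, $p$-efficiency is given by hypothesis.

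With these three ingredients in hand, the proof of 4-acylindricity (Proposition \ref{JSJacyl}), together with Lemma \ref{malnormalcusps}, goes through unchanged --- but crucially the theorem is stated for graph manifolds, so no hyperbolic pieces appear and the only case-analysis needed is the major/minor vertex one. From here the chain of propositions establishing that edge groups are non-pathological tori, that $Z_v$ for $v$ major is a major fibre-like subgroup, and that every major fibre-like subgroup arises this way, is a purely group-theoretic argument in $G = \proP{\pi_1 M}$ using the pro-$p$ tree $S({\cal G})$, and transfers verbatim (replacing $\hat{\Z}$-powers by $\Z[p]$-powers throughout, which is legitimate because no element of $\Z[p]\smallsetminus\{0\}$ is a zero-divisor). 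Thus the abstract graph $\Gamma'$ built from major fibre-like subgroups and non-pathological tori is an invariant of the pro-$p$ group $G$, and together with its $G$-action recovers $S({\cal G})$; the isomorphism $\Phi$ then produces an equivariant isomorphism $S({\cal G}) \iso S({\cal H})$, descending to the required graph isomorphism $\varphi: X \to Y$ with the edge- and vertex-group correspondence.

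The main obstacle will be checking the pro-$p$ analogue of Lemma \ref{orbbdy}, because the argument there uses passage to a suitable finite cover with a free-product decomposition, and one must verify that such covers can be arranged to have degree a power of $p$ under the $p$-efficiency assumption. Concretely, one must show that for any two boundary components of a hyperbolic $2$-orbifold whose orbifold fundamental group is residually $p$ and $\Z/p$-orientable, there is a $p$-power regular cover in which the two boundary components lift to distinct free factors of a free-product decomposition of the cover's fundamental group. This follows from the subregular-cover construction in the lemma preceding Proposition \ref{respSFS}, applied to a further $p$-power cover chosen to separate the chosen pair of boundary components; once this is granted, the rest of the argument is mechanical.
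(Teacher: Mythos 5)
Your proposal is correct and follows essentially the same route as the paper, which proves Theorem \ref{proPdecompfixed} precisely by observing that the only manifold-specific inputs to the proof of Theorem \ref{decompfixed} are $p$-efficiency, Lemma \ref{orbbdy} and Proposition \ref{bdycptsaremax}, and that these can be established using only covers of $p$-power degree and finite $p$-group quotients, the rest being the theory of groups acting on profinite trees, which works at least as well pro-$p$. Your extra care over the pro-$p$ version of Lemma \ref{orbbdy} (arranging the auxiliary cover to have $p$-power degree via the subregular cover construction) is exactly the point the paper leaves to the reader, so no change of strategy is involved.
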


\section{Graph manifolds with isomorphic profinite completions}\label{secGMrigid}
We will now address the question of when two graph manifold groups can have isomorphic profinite completions. We will first restrict attention to those graph manifolds whose vertex groups have orientable base space. Let us recall from the standard theory of graph manifolds how one obtains numerical invariants of a graph manifold $M$. Let the JSJ decomposition be $(X, M_\bullet)$. In this section we shall adopt the convention (from Serre) that each `geometric edge' of a finite graph is a pair $\{e,\bar e\}$ of oriented edges, with $\bar e$ being the `reverse' of $e$. Fix presentations in the standard form
\[\left< a_1,\ldots, a_r, e_1, \ldots, e_s, u_1, v_1, \ldots, u_g, v_g, h\mid a_i^{p_i} h^{q_i}, h \text{ central}\, \right>\]
for each $M_x\, (x\in VX)$. This determines an ordered basis $\{h, e_i\}$ for the fundamental group of each boundary torus of $M_x$, where the final boundary component is described by 
\[e_0 = (a_1\cdots a_r e_1\cdots e_s [u_1, v_1]\cdots[u_g, v_g])^{-1}\]
Thus the gluing map along an edge $e$ takes the form of a matrix (acting on the left of a column vector) 
\[\begin{pmatrix}
\alpha(e) & \beta(e)\\ \gamma(e) & \delta(e)
\end{pmatrix} \] 
where $\gamma(e)$, the intersection number of the fibre of $d_0(e)$ with that of $d_1(e)$, is non-zero by the definition of a graph manifold. The number $\gamma(e)$ is well-defined up to a choice of orientation of the fibres of the two vertex groups. This matrix has determinant $-1$ from the requirement that the graph manifold be orientable. Once an orientation of the fibre and base are fixed, the number $\delta(e)$ becomes independent of the choice of presentation, modulo $\gamma(e)$ (changing these orientations multiplies the matrix by $-1$). The `section' of the fibre space determining $\delta(e)$ may be changed by a Dehn twist along an annulus which either joins two boundary components or joins the boundary component to an exceptional fibre. This however leaves the {\em total slope}
\[ \tau(x) = \sum_{e: d_0(e)=x} \frac{\delta(e)}{\gamma(e)}  - \sum \frac{q_i}{p_i} \]
of the space invariant. Note also that these quantities are all invariant under the conjugation action of the group on itself, i.e.\ it does not matter which conjugate of each edge group we consider.

\begin{theorem}\label{GMrigidor}
Let $M$, $N$ be closed orientable graph manifolds with JSJ decompositions $(X,M_\bullet)$, $(Y,N_\bullet)$, where all major vertex spaces have orientable base orbifold. Suppose $\widehat{\pi_1 M}\iso \widehat{\pi_1 N}$ and let $\phi\colon X\to Y$ be the induced graph isomorphism from Theorem \ref{decompfixed}.
\begin{enumerate}
\item If $X$ is not a bipartite graph, then $M$ is homeomorphic to $N$ via a homeomorphism inducing the graph isomorphism $\phi$.
\item Suppose $X$ is bipartite on vertex sets $R, B$.  Then there exists an element $\kappa\in\hat{\Z}{}^{\!\times}$ such that the following conditions hold, for some choices of orientations of the fibres of each major vertex group of $M$ or $N$:
\begin{enumerate}
\item[(a)] For all $e\in EX$, we have $\gamma(e)=\gamma(\phi(e))$
\item[(b)] For every vertex space of both $M$ and $N$ the total slope of that vertex is zero.
\item[(c1)] For every vertex $r\in R$, $(M_r, N_{\phi(r)})$ is a Hempel pair of scale factor $\kappa$ and for every edge $e$ with $d_0(e) = r$ we have $\delta(\phi(e)) \equiv \kappa \delta(e)$ modulo $\gamma(e)$.
\item[(c2)] For every vertex $b\in B$, $(M_b, N_{\phi(b)})$ is a Hempel pair of scale factor $\kappa^{-1}$ and for every edge $e$ with $d_0(e) = b$ we have $\delta(\phi(e)) \equiv \kappa^{-1} \delta(e)$ modulo $\gamma(e)$.
\end{enumerate}
\end{enumerate}  
\end{theorem}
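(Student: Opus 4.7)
The plan is to extract numerical data at each major vertex from Theorem \ref{Wilkeswbdy}, propagate the constraints across edges by analysing $\Phi$ on each edge group $\hat{\Z}{}^2$, and then use the parity of cycles in $X$ to settle the dichotomy. I would first invoke Theorem \ref{decompfixed} to obtain $\phi\colon X\to Y$ and compatible isomorphisms of every vertex and edge group under $\Phi$. For each major vertex $v$, Theorem \ref{Wilkeswbdy} then supplies units $\lambda_v,\mu_v\in\hat{\Z}{}^{\!\times}$ (the scalar by which $\Phi$ acts on the fibre subgroup $\hat{\Z}_{h_v}$ and the exotic-automorphism type of the induced map on the base orbifold respectively), so that $(M_v,N_{\phi(v)})$ is a Hempel pair of scale factor $\kappa_v=\mu_v^{-1}\lambda_v$.

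The core of the argument is the edge compatibility. Fix $e=[v_0,v_1]$. The edge group $G_e\cong\hat{\Z}{}^2$ carries two natural bases $(h_{v_i},s^i_e)$ (from its two embeddings into the adjacent vertex groups), related by the gluing matrix $\bigl(\begin{smallmatrix}\alpha(e)&\beta(e)\\\gamma(e)&\delta(e)\end{smallmatrix}\bigr)$ of determinant $-1$. In each basis the restriction of $\Phi$ to $G_e$ is upper triangular of the form $\bigl(\begin{smallmatrix}\lambda_{v_i}&\ell_i\\0&\mu_{v_i}\end{smallmatrix}\bigr)$, and equating the two descriptions gives four scalar equations. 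The $(2,1)$-entry reads $\gamma(e)\mu_{v_1}=\lambda_{v_0}\gamma(\phi(e))$, so Lemma \ref{nonzerodiv}, applied to the nonzero integer $\gamma(e)$, forces $\gamma(\phi(e))=\epsilon_e\gamma(e)$ and $\lambda_{v_0}=\epsilon_e\mu_{v_1}$ for a common sign $\epsilon_e\in\{\pm1\}$. A symmetric analysis at the other endpoint yields $\lambda_{v_1}=\epsilon_e\mu_{v_0}$, and combining these gives $\kappa_{v_0}\kappa_{v_1}=1$; the $(2,2)$-entry then gives $\delta(\phi(e))\equiv\epsilon_e\kappa_{v_0}\delta(e)\pmod{\gamma(e)}$.

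Reorienting the fibre at $v$ replaces $\lambda_v$ by $-\lambda_v$ (hence $\kappa_v$ by $-\kappa_v$) and toggles $\epsilon_e$ on every edge at $v$, so the obstruction to making all $\epsilon_e=+1$ is a $\Z/2$ cocycle on $X$ that one trivialises by fixing signs along a spanning tree and then checking on cycles; this produces condition (a) and the cleaner congruence $\delta(\phi(e))\equiv\kappa_{v_0}\delta(e)\pmod{\gamma(e)}$. In the bipartite case the relation $\kappa_{v_0}\kappa_{v_1}=1$ is consistent with declaring $\kappa_r=\kappa$ on $R$ and $\kappa_b=\kappa^{-1}$ on $B$ for a common $\kappa\in\hat{\Z}{}^{\!\times}$, giving (c1) and (c2); condition (b) would come from the analogous Hempel-pair transformation law for the cone invariants (multiplication by $\kappa_v^{-1}$ modulo $p_i$), which combined with the edge congruence yields $\tau(\phi(v))\equiv\kappa_v\tau(v)\pmod 1$, and then rationality of $\tau(v)$ plus Lemma \ref{nonzerodiv} forces $\tau(v)=0$ whenever $\kappa\notin\{\pm 1\}$.

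In the non-bipartite case, any odd cycle $v_0,v_1,\ldots,v_{2k+1}=v_0$ yields $\kappa_{v_{i+1}}=\kappa_{v_i}^{-1}$ around the cycle and hence $\kappa_{v_0}^{2}=1$ in $\hat{\Z}{}^{\!\times}$. The main obstacle is then to upgrade this formal relation (which alone admits solutions in $\prod_{p}\{\pm1\}$) to the geometric statement that $\kappa_v\equiv\pm1\pmod{N_v}$ at every vertex $v$, where $N_v$ is the l.c.m.\ of the cone orders of $O_v$. I expect this to follow by coupling $\kappa_{v_0}^{2}=1$ with the analogous edge congruences on cone invariants and slopes, and the global coherence forced by $\Phi$ being a single profinite group isomorphism rather than a formal system of local solutions, so as to eliminate the mixed sign patterns in $\prod_{p\mid N_v}\{\pm 1\}$. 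Once this is established, Theorem \ref{Wilkeswbdy} gives $M_v\cong N_{\phi(v)}$ at every vertex, and the matching of the gluing data (up to the permissible section changes, which only affect $\delta(e)$ modulo $\gamma(e)$) assembles into a homeomorphism $M\to N$ inducing $\phi$.
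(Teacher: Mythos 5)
Your overall strategy is the paper's: extract $(\lambda_v,\mu_v)$ from Theorem \ref{Wilkeswbdy}, compare the two upper-triangular descriptions of $\Phi$ on each edge group, apply Lemma \ref{nonzerodiv} to the $(2,1)$-entry, and split on the parity of cycles. However, there are two genuine gaps. First, in the non-bipartite case you discard exactly the information that closes the argument. The edge relations you derive are the sharp equalities $\lambda_{v_0}=\epsilon_e\mu_{v_1}$ and $\lambda_{v_1}=\epsilon_e\mu_{v_0}$ in $\hat\Z$, not merely $\kappa_{v_0}\kappa_{v_1}=1$. Chaining these equalities (alternately $\lambda$ and $\mu$) around an odd cycle returns you to the starting vertex with the opposite letter, giving $\lambda_{v_0}=\pm\mu_{v_0}$ on the nose, hence $\kappa_{v_0}=\pm1$ exactly; the ``main obstacle'' you describe, of upgrading $\kappa^2=1$ from $\prod_p\{\pm1\}$ to $\pm1$, does not arise and your proposed route via cone invariants is left entirely speculative (``I expect this to follow''), so as written the non-bipartite case is not proved. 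Even after $\kappa=\pm1$, assembling a homeomorphism requires knowing that the off-diagonal entries $\rho_e$ of $\Phi$ on the boundary tori are $\mu_x$ times integers (Lemma \ref{nonzerodiv} again) \emph{and} that these integers are realisable by Dehn twists, i.e.\ satisfy $\sum_{d_0(e)=x} r_e=\sum q_i/p_i-\sum q_i'/p_i'$; your remark that section changes ``only affect $\delta(e)$ modulo $\gamma(e)$'' does not address this global per-vertex constraint.

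Second, your derivation of condition (b) is incorrect as stated. From the congruences $\delta(\phi(e))\equiv\kappa\delta(e)\pmod{\gamma(e)}$ and $q_i'\equiv\kappa q_i\pmod{p_i}$ alone you only get $\tau(\phi(v))\equiv\kappa\tau(v)\pmod 1$, and this does \emph{not} force $\tau(v)=0$ when $\kappa\neq\pm1$: take $\tau(v)=\tau(\phi(v))=\tfrac12$ and any unit $\kappa\equiv1\pmod 2$, $\kappa\neq\pm1$; then $\kappa\tau(v)-\tau(\phi(v))=(\kappa-1)/2\in\hat\Z$, so the congruence holds. What is needed is the exact identity $\tau(\phi(v))=\kappa\tau(v)$, and this requires pinning down the \emph{sum} of the twist parameters $\rho_e$ at each vertex. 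That constraint comes from the fact that $\Phi_v$ is an isomorphism of the whole vertex group: the boundary word $e_0=(a_1\cdots a_re_1\cdots e_s[u_1,v_1]\cdots)^{-1}$ forces $\sum_{d_0(e)=v}\rho_e=\lambda_v\sum q_i/p_i-\mu_v\sum q_i'/p_i$ (the paper's equation for Dehn-twist admissibility), and only with this does the computation give $\tau(\phi(v))=\kappa\tau(v)$, after which rationality of the total slopes and Lemma \ref{nonzerodiv} yield $\kappa=\pm1$ or $\tau\equiv0$. Your proposal never records this per-vertex relation, so both (b) and the realisability step in case 1 are unsupported.
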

\begin{rmk}
The conditions (a)-(c1) (or (c2)) in the above theorem have a rather neat interpretation in terms of the following object.
\begin{defn}
	Let $M=(X,M_\bullet)$ be a graph manifold with the given JSJ decomposition, and let $x\in VX$. Define the {\em filled vertex space} $\overline{M_x}$ of $M_x$ as follows. For every edge $e$ with $d_0(e)=x$, the fibre of $M_{d_1(x)}$ describes a meridian on the relevant boundary torus $T_e$ of $M_v$. There is a unique way to glue in a solid torus along $T_e$ with this meridian such that the fibring on $M_v$ extends over the solid torus. Gluing in a solid torus in this way gives a closed \SFS{} $\overline{M_x}$.   
\end{defn}
 Suppose that $(M_r, N_{\phi(r)})$ is a Hempel pair for scale factor $\kappa$, and for simplicity of notation suppose that $M_r$, $N_{\phi(r)}$ have the same base orbifold $O$. Now the filled vertex space $\overline{M_r}$ has an exceptional fibre with invariants $(\gamma(e), -\delta(e))$ for each boundary torus $e$ of $M$ and has Euler number $\tau(e)$. Perform the same operation on $N_{\phi(r)}$ to obtain $\overline{N_{\phi(r)}}$. Then (a) becomes the statement that $\overline{M_r}$, $\overline{N_{\phi(r)}}$ still have the same base orbifold $\overline{O}$, (b) states that both have Euler number zero, and (c1) states that they are still a Hempel pair!  
\end{rmk}
\begin{proof}
Let $\Phi\colon\widehat{\pi_1 M}\to \widehat{\pi_1 N}$ be an isomorphism. By (the proof of) Theorem \ref{decompfixed} we have an isomorphism $\phi\colon X\to Y$ and isomorphisms $\Phi_x\colon\widehat{\pi_1 M_x}\to \widehat{\pi_1 N_{\phi(x)}}$ for every $x\in X$. Choose some orientation of the fibres (i.e.\ generators of the fibre subgroups), giving an identification of the fibre subgroup with $\hat \Z$. Then by Theorem \ref{Wilkeswbdy} we may identify the base orbifolds of $M_x$ and $N_{\phi(x)}$ in such a way that $\Phi_x$ splits as an isomorphism of short exact sequences 
\[\begin{tikzcd}
1\ar{r} & \hat\Z\ar{r} \ar{d}{\cdot \lambda_x} & \widehat{\pi_1 M_x} \ar{r} \ar{d}{\Phi_x} & \widehat{\ofg[O_x]} \ar{r} \ar{d}{\psi_x} & 1 \\
1\ar{r} & \hat\Z\ar{r} & \widehat{\pi_1 N_{\phi(x)}} \ar{r} & \widehat{\ofg[O_x]} \ar{r} & 1
\end{tikzcd}\]
where $\lambda_x$ is some invertible element of $\hat\Z$ and $\psi_x$ is an automorphism of $B=\widehat{\ofg[O]}$ which sends each boundary component $e_i$ to a conjugate of $e_i^{\mu_x}$ and sends each cone-point $a_i$ to a conjugate of $a_{i}^{\mu_x}$, for some $\mu\in\hat\Z {}^{\!\times}$. That is, $(M_x, N_{\phi(x)})$ is a Hempel pair with scale factor $\lambda_x/\mu_x$. Note that, given an orientation on $M$ and of each fibre, the base orbifolds also inherit an orientation.

First consider a major vertex $x$ of $X$. As before, pick standard presentations of every major vertex group of $M$ and $N$. Then the maps $\Phi_x$ induce a map
\[\begin{pmatrix} \lambda_x & \rho_e\\ 0&\mu_x\end{pmatrix}\]
from $\widehat{\pi_1 M_e}$ to $\widehat{\pi_1 N_{\phi(e)}}$ for each edge $e$ with $d_0(e) = x$, where the $\rho_e$ may be specified as follows. Let $\pi_1 M_x$ have presentation 
\[\left< a_1,\ldots, a_r, e_1, \ldots, e_s, u_1, v_1, \ldots, u_g, v_g, h\mid a_i^{p_i} h^{q_i}, h \text{ central}\, \right>\] and $\pi_1 N_{\phi(x)}$ have presentation
\[\left< a_1,\ldots, a_r, e_1, \ldots, e_s, u_1, v_1, \ldots, u_g, v_g\mid {a}_i^{p'_i}= {h}^{q'_i}, h \text{ central}\right>\]
where we use the same letters to denote various generators using the identification of the base orbifolds. Then we have 
\[ \Phi_x(e_i) \sim {e_i}^{\mu_x}h^{\rho_{e_i}}\]
for each edge $e_i$, where $\sim$ denotes conjugacy. For the remaining edge $e_0 = (a_1\cdots a_r e_1\cdots e_s [u_1, v_1]\cdots[u_g, v_g])^{-1}$ we have of necessity
\[\Phi_x(e_0) \sim {e_0}^{\mu_x} h^{-(\rho_{e_1}+\cdots+\rho_{e_s}+\nu_1+\cdots\nu_r)}   \]
where the $\nu_i$ are the unique elements of $\hat\Z$ such that
\[\mu_x q'_i = \lambda_x q_i + \nu_i p_i \]
or, in terms of the map $\Phi_x$, given by $\Phi_x(a_i) \sim {a_i}^{\mu_x} {h}^{\nu_i}$.
These definitions of $\nu_i$ are equivalent due to the fact that $\Phi_x$ is an isomorphism. Hence we find 
\begin{equation} 
\sum_{d_0(e) = x} \rho_e  = \sum \lambda_x \frac{q_i}{p_i}  -\sum \mu_x\frac{q'_i}{p_i} \label{DTcondition} 
\end{equation}
an analogue of the classic restrictions on Dehn twists taking one \SFS{} presentation to another.

For minor pieces the isomorphism also induces a map on the single boundary component, this time represented by a diagonal matrix as the boundary component of the minor piece has a canonical basis (up to multiplication by $\pm 1$), where the second basis element is given by the unique maximal cyclic subgroup which is normal but not central. Equation \eqref{DTcondition} is still satisfied as all terms are zero.

Now, the fact that the gluing maps along any edge $e=\overrightarrow{xy}$ commute with $\Phi$ forces the equation 
\begin{equation}
\begin{pmatrix}
\alpha(\phi(e)) & \beta(\phi(e))\\
\gamma(\phi(e)) & \delta(\phi(e))
\end{pmatrix} = \begin{pmatrix}
\lambda_y & \rho_{\bar e} \\ 0&\mu_y
\end{pmatrix}  \begin{pmatrix}
\alpha(e) & \beta(e)\\
\gamma(e) & \delta(e)
\end{pmatrix}\begin{pmatrix}
\lambda_x &  \rho_e\\ 0&\mu_x
\end{pmatrix}^{-1}
\end{equation}
to hold. To simplify notation, write $\alpha'(e) = \alpha(\phi(e))$ and so on, and suppress the edge $e$ for the time being. In this notation we have
\begin{equation}
\begin{pmatrix}
\alpha' & \beta'\\
\gamma' & \delta'
\end{pmatrix}= \begin{pmatrix}
\lambda_x^{-1}\lambda_y\alpha+\lambda_x^{-1}\bar\rho\gamma & [\cdots]\\ \lambda^{-1}_x\mu_y \gamma & \mu_x^{-1}\mu_y\delta - \lambda_x^{-1}\mu_x^{-1}\mu_y\rho \gamma
\end{pmatrix} \label{matreqn}
\end{equation}
where the upper right entry is complicated and unimportant. The worried reader should note that both sides have determinant $-1$ and that the lower-left entry is not a zero-divisor in $\hat\Z$; these considerations determine the last entry.

Now we have $\lambda_x^{-1}\mu_y \gamma = \gamma'$. Since $\gamma,\gamma'$ are non-zero elements of $\Z$, this (by Lemma \ref{nonzerodiv}) forces $\lambda_x = \pm_e \mu_y$. Consideration of the determinant yields $\lambda_y = \pm_e \mu_x$; that is, $\pm_{\bar e} = \pm_e$.

Suppose that $X$ admits a cycle of odd length. Then the above relations, tracked around this cycle, force $\lambda_x = \pm \mu_x$ for some (and hence, by connectedness and the previous paragraph, every) vertex $x$. Here the $\pm$ sign is independent of $x$, so after changing the orientation on $M$ we may assume it is 1. But then $(M_x, N_{\phi(x)})$ are a Hempel pair of stretch factor $\lambda_x/\mu_x= 1$; that is, they are isomorphic Seifert fibre spaces. It follows that if we can find integers $r_e\in\Z$ for every edge such that
\[\begin{pmatrix}
\alpha(\phi(e)) & \beta(\phi(e))\\
\gamma(\phi(e)) & \delta(\phi(e))
\end{pmatrix} = \begin{pmatrix}
1 & r_{\bar e} \\ 0&1
\end{pmatrix}  \begin{pmatrix}
\alpha(e) & \beta(e)\\
\gamma(e) & \delta(e)
\end{pmatrix}\begin{pmatrix}
1 &  r_e\\ 0&1
\end{pmatrix}^{-1}\]
and with these $r_e$ realisable by Dehn twists- that is, with
\[ \sum_{d_0(e) = x} r_e = \sum\frac{q_i}{p_i} -\sum \frac{q'_i}{p'_i} \]
then $\pi_1 M$, $\pi_1 N$ are isomorphic. But since for each edge we have
\[ \delta - \mu_x^{-1} \rho \gamma = \delta'\]
the numbers  $r_e=\mu_x^{-1} \rho_e$ are in fact integers by Lemma \ref{nonzerodiv}. Equation \eqref{DTcondition} now states that these $r_e$ are actually realisable by Dehn twists. Hence $M$ and $N$ are homeomorphic (by a homeomorphism covering $\phi$) and we have proved the theorem in the case when $X$ is not bipartite.

Now take $X$ bipartite, on two sets $R$ and $B$. We will first take care of the signs $\pm_e$ by changing the fibre orientations of certain vertex groups of $N$. Pick some basepoint $x_0\in R$ and a maximal subtree $T$ of $X$ and, moving outward from the basepoint along $T$, change such fibre orientations as are required to force $\pm_e= +$ for every edge $e$ of $T$. Now define $\lambda = \lambda _{x_0}, \mu= \mu_{x_0}$. We now have 
\[ \lambda_r=\lambda, \mu_r=\mu, \lambda_b=\mu, \mu_b= \lambda\]
for all vertices $r\in R, b\in B$. For every remaining edge $e$ (with $d_0(e)=r\in R$ and $d_1(e)=b\in B$, say) we have $\lambda=\lambda_r=\pm_e \mu_b = \pm_e \lambda$, whence $\pm_e=+$ for all $e$.  

Now define $\kappa=\lambda/\mu$. For all edges $e=\overrightarrow{xy}$ we have $\gamma(\phi(e))= \lambda_x^{-1} \mu_y \gamma(e) = \gamma(e)$, so condition (a) of the theorem holds. By construction of $\kappa$ and the matrix equation \eqref{matreqn}, properties (c1) and (c2) also hold. Finally for any $x\in R$ we have
\begin{eqnarray*}
\tau(\phi(x))&=&\sum_{d_0(e)=x} \frac{\delta(\phi(e))}{\gamma(\phi(e))} - \sum \frac{q'_i}{p'_i}\\
 &=& \sum_{d_0(e)=x} \frac{\kappa\delta(e)-\mu_x^{-1}\rho_e \gamma(e)}{\gamma(e)} - \sum \frac{q'_i}{p'_i}\\
 &=& \sum_{d_0(e)=x} \frac{\kappa\delta(e)}{\gamma(e)} - \sum \kappa\frac{q_i}{p_i}= \kappa\tau(x)
\end{eqnarray*}
where the deduction of the last line uses equation \eqref{DTcondition}. A similar equation holds for vertices in $B$. Now (again by Lemma \ref{nonzerodiv}) this forces either $\kappa=\pm 1$ when, as in the non-bipartite case, $M$ and $N$ are isomorphic; or it forces $\tau(x) = \tau(\phi(x))=0$ for all $x$, i.e.\ we have condition (b) of the theorem. This completes the proof in this direction.
\end{proof}
The above arguments make it clear that, unless graph manifolds are to be rigid, we must be able to realise non-trivial values of the quantity `$\mu_x$', else $\kappa$ would have to be $\pm 1$. That is, we must be able to find exotic automorphisms of type $\mu$. The term `exotic automorphism' was defined in Definition \ref{exauto}. We recall it here for convenience:
\begin{defnrecall}
Let $O$ be an orientable 2-orbifold with boundary, with fundamental group
\[B= \left< a_1,\ldots, a_r, e_1, \ldots, e_s, u_1, v_1, \ldots, u_g,v_g\mid a_i^{p_i}\right>\]
where the boundary components of $O$ are represented by the conjugacy classes of the elements $e_1, \ldots, e_s$ together with
\[ e_0 = \left(a_1 \cdots a_r e_1\cdots e_s [u_1, v_1]\cdots [u_g,v_g]\right)^{-1} \]
Then an {\em exotic automorphism of $O$ of type $\mu$} is an automorphism $\psi\colon\hat B\to\hat B$ such that $\psi(a_i) \sim a_i^\mu$ and $\psi(e_i)\sim e_i^\mu$ for all $i$, where $\sim$ denotes conjugacy in $\hat B$.
Similarly, let $O'$ be a non-orientable 2-orbifold with boundary, with fundamental group 
\[B'= \left< a_1,\ldots, a_r, e_1, \ldots, e_s, u_1,\ldots, u_g\mid a_i^{p_i}\right>\]
where the boundary components of $O'$ are represented by the conjugacy classes of the elements $e_1, \ldots, e_s$ together with
\[ e_0 = \left(a_1 \cdots a_r e_1\cdots e_s  u_1^2\cdots u_g^2\right)^{-1} \]
Let $o\colon \hat B'\to \{\pm 1\}$ be the orientation homomorphism of $O'$. Let $\sigma_0, \ldots, \sigma_s\in\{\pm1\}$. Then an {\em exotic automorphism of $O'$ of type $\mu$ with signs $\sigma_0, \ldots, \sigma_s$} is an automorphism $\psi\colon\hat B'\to\hat B'$ such that $\psi(a_i) \sim a_i^\mu$ and $\psi(e_i)= (e_i^{\sigma_i\mu})^{g_i}$ for all $i$ where $o(g_i)=\sigma_i$ for all $i$.
\end{defnrecall} 

\begin{theorem}[\cite{belyui80}, see also Lemma 9 of \cite{GDJZ15}]\label{Belyi}
A sphere with three discs removed admits an exotic automorphism of type $\mu$ for every $\mu\in{\hat\Z}{}^{\!\times}$. 
\end{theorem}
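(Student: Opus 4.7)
My plan is to realise the required automorphisms as elements of the natural Galois action on the profinite completion of the free group of rank $2$, following the classical Belyi approach. Identify the three-holed sphere $O$ with its interior, so that $\hat B=\widehat{\ofg[O]}$ is the profinite completion of $F_2=\gp{e_1,e_2}$, with the three boundary components represented by $e_1$, $e_2$ and $e_0=(e_1e_2)^{-1}$. Hence constructing an exotic automorphism of $O$ of type $\mu$ amounts to producing an automorphism $\psi$ of $\hat F_2$ with $\psi(e_i)\sim e_i^\mu$ for $i=0,1,2$.

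The first main input I would invoke is the Riemann existence / comparison theorem identifying $\hat F_2$ with the algebraic (\'etale) fundamental group of the scheme $U=\mathbb{P}^1_{\bar{\mathbb{Q}}}\smallsetminus\{0,1,\infty\}$, equipped with a basepoint (say a tangential basepoint at $\infty$). Under this identification the conjugacy class of each $e_i$ corresponds to the inertia subgroup at one of the three missing points of $\mathbb{P}^1$. Because $U$ is defined over $\mathbb{Q}$, the short exact sequence
\[ 1\to\pi_1^{\text{\'et}}(U_{\bar{\mathbb{Q}}})\to\pi_1^{\text{\'et}}(U_{\mathbb{Q}})\to\mathrm{Gal}(\bar{\mathbb{Q}}/\mathbb{Q})\to 1 \]
splits over the chosen tangential basepoint, giving a canonical outer action of $\mathrm{Gal}(\bar{\mathbb{Q}}/\mathbb{Q})$ on $\hat F_2$.

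The second input is the computation of this action on inertia. Each inertia subgroup at $0,1,\infty$ is a copy of $\hat{\mathbb{Z}}(1)=\varprojlim\mu_n$, and the Galois action on it is, by definition, the cyclotomic character. After fixing a compatible system of roots of unity this produces, for each $\sigma\in\mathrm{Gal}(\bar{\mathbb{Q}}/\mathbb{Q})$, a well-defined (outer) automorphism $\psi_\sigma$ of $\hat F_2$ satisfying $\psi_\sigma(e_i)\sim e_i^{\chi(\sigma)}$ for $i=0,1,2$, where $\chi\colon\mathrm{Gal}(\bar{\mathbb{Q}}/\mathbb{Q})\to\hat{\mathbb{Z}}{}^{\!\times}$ is the cyclotomic character. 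The final step is to note that $\chi$ is surjective, since $\mathrm{Gal}(\mathbb{Q}(\zeta_n)/\mathbb{Q})\cong(\mathbb{Z}/n)^\times$ for every $n$ and these are compatible in the inverse limit. Hence for an arbitrary $\mu\in\hat{\mathbb{Z}}{}^{\!\times}$ one can pick $\sigma$ with $\chi(\sigma)=\mu$, and $\psi_\sigma$ is the required exotic automorphism of type $\mu$.

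The principal obstacle is that we are handed only an outer action of $\mathrm{Gal}(\bar{\mathbb{Q}}/\mathbb{Q})$, whereas the definition of an exotic automorphism demands an honest automorphism; however this is harmless because the condition $\psi(e_i)\sim e_i^\mu$ is invariant under inner automorphisms, so any lift of $\psi_\sigma$ to $\mathrm{Aut}(\hat F_2)$ will do. A secondary bookkeeping point is to check that the choice of tangential basepoint and compatible system of roots of unity ensures the three inertia generators we pick up coincide (up to conjugation) with our chosen $e_0,e_1,e_2$ rather than with some other peripheral system; this follows from the standard pinning conventions for $\pi_1^{\text{\'et}}(U)$. Once these identifications are in place, the result is immediate. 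An alternative reference, Lemma 9 of \cite{GDJZ15}, carries out essentially the same argument in the language needed here, so one could equally cite that statement directly.
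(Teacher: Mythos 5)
Your argument is correct and is essentially the proof behind the result the paper simply quotes: Theorem \ref{Belyi} is cited from Belyi and from Lemma 9 of \cite{GDJZ15}, whose argument is exactly the Galois action of $\mathrm{Gal}(\bar{\mathbb{Q}}/\mathbb{Q})$ on $\pi_1^{\text{\'et}}(\mathbb{P}^1_{\bar{\mathbb{Q}}}\smallsetminus\{0,1,\infty\})\iso\hat F_2$, acting on the three inertia (peripheral) classes by the cyclotomic character, together with surjectivity of that character. So you have reconstructed the standard proof of the cited statement rather than found a different route; no gaps.
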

We will now extend this to all 2-orbifolds with boundary, and then describe the automorphisms of the profinite completions of Seifert fibre space groups that we will use. 
\begin{prop}
For every $n$, a sphere with $n$ discs removed admits an exotic automorphism of type $\mu$, for every $\mu\in\hat\Z{}^{\!\times}$.
\end{prop}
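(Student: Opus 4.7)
The plan is to proceed by induction on $n$, with base case $n=3$ provided by Theorem \ref{Belyi}. For $n \leq 2$ the group $\widehat{\pi_1 O_n}$ is either trivial or isomorphic to $\hat\Z$, and taking $\psi$ to be the identity or the map $x\mapsto x^\mu$ respectively suffices. Fix $n \geq 4$ and assume the result for $n-1$. Choose an essential separating simple closed curve $c$ in $O_n$, cutting it into an $(n-1)$-holed sphere $O'$ and a pair of pants $P$, each with $c$ as one of its boundary components; every boundary component of $O_n$ then becomes a boundary component of exactly one of $O'$ or $P$ (up to conjugation), and the remaining boundary of each piece is $c$.

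By van Kampen,
\[ \pi_1 O_n \iso \pi_1 O' *_{\langle c\rangle} \pi_1 P. \]
Since $\pi_1 O_n$ is free, hence LERF, this graph of groups is efficient, and so
\[ \widehat{\pi_1 O_n} \iso \widehat{\pi_1 O'} \amalg_{\widehat{\langle c\rangle}} \widehat{\pi_1 P}. \]
By the inductive hypothesis, fix an exotic automorphism $\psi'$ of $O'$ of type $\mu$; by Theorem \ref{Belyi}, fix an exotic automorphism $\psi_P$ of $P$ of type $\mu$. Since $c$ is a boundary component of each of $O'$ and $P$, we may write $\psi'(c) = (g')^{-1}c^\mu g'$ and $\psi_P(c) = g_P^{-1}c^\mu g_P$ for some $g'\in\widehat{\pi_1 O'}$, $g_P\in\widehat{\pi_1 P}$.

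Post-compose $\psi'$ with the inner automorphism $x\mapsto g'x(g')^{-1}$ of $\widehat{\pi_1 O'}$, and post-compose $\psi_P$ with $x\mapsto g_P x g_P^{-1}$ of $\widehat{\pi_1 P}$. The resulting maps are still exotic automorphisms of type $\mu$ (an inner automorphism only modifies the representative of the target conjugacy class and never its type), and they now both send $c$ exactly to $c^\mu$. The universal property of the profinite amalgamated free product then yields a continuous endomorphism $\psi_n$ of $\widehat{\pi_1 O_n}$ restricting to these two maps on the respective factors. By construction, every boundary generator of $O_n$ (including $e_0$, which becomes a boundary generator of whichever piece contains it) lies in either $\widehat{\pi_1 O'}$ or $\widehat{\pi_1 P}$, and is therefore sent by $\psi_n$ to a $\widehat{\pi_1 O_n}$-conjugate of its $\mu$th power. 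Hence $\psi_n$ satisfies the exotic condition, provided it is an automorphism.

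For this final point, the image of $\psi_n$ contains the images of the restrictions, namely $\widehat{\pi_1 O'}$ and $\widehat{\pi_1 P}$ (as $\psi'$ and $\psi_P$ are automorphisms of their respective factors); these two subgroups topologically generate $\widehat{\pi_1 O_n}$, so $\psi_n$ is surjective. Injectivity then follows from the Hopficity of topologically finitely generated profinite groups. The main technical subtlety lies not in the gluing step itself but in the transfer of the amalgamation from the discrete to the profinite setting, which requires efficiency of the graph of groups decomposition of $\pi_1 O_n$; this is automatic here because $\pi_1 O_n$ is a free group and free groups are subgroup separable.
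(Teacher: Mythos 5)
Your proof is correct and follows essentially the same route as the paper: induction from Belyi's $n=3$ case, splitting the $n$-holed sphere into an $(n-1)$-holed sphere and a pair of pants, adjusting both exotic automorphisms by inner automorphisms so they agree on the amalgamating curve, and then invoking the universal property of the profinite amalgam together with surjectivity plus Hopficity. Your explicit appeal to efficiency (via LERF for free groups) to identify $\widehat{\pi_1 O_n}$ with the profinite amalgam is a point the paper leaves implicit, but it is the same argument.
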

\begin{proof}
The proof is by induction on $n$, starting at $n=3$. Suppose that $\psi_{n-1}$ is an exotic automorphism of an $(n-1)$-holed sphere $S_{n-1}$; by applying an inner automorphism we may assume that some boundary loop $e_0$ is sent by $\psi_{n-1}$ to precisely $e_0^\mu$. Let $\psi_3$ be an exotic automorphism of type $\mu$ of a 3-holed sphere $S_3$. Let the fundamental group of $S_3$ be generated by $a,b$; again we may assume that for the third boundary component $E=(ab)^{-1}$ we have $\psi_3(E)= E^\mu$.

Now gluing $e_0$ to $E$ produces an $n$-holed sphere $S_n$ with fundamental group
\[G=\left< \pi_1 S_{n-1}, \pi_1 S_3\mid E=e_0^{-1} \right> \] 
By construction, defining $\psi_n\colon\hat G\to \hat G$ by $\psi_{n-1}$ and $\psi_3$ on the two vertex groups of the amalgam gives a well-defined map from $\hat G$ to $\hat G$ to itself. This map is an isomorphism by the universal property of amalgamated free products (or because $\hat G$ is Hopfian). We now have an exotic automorphism $\psi_n$ as required.
\end{proof}
\begin{prop}\label{orbexauto}
Let $O$ be an orientable 2-orbifold with boundary. Then $O$ admits an exotic automorphism of type $\mu$ for any $\mu\in\hat\Z$. Moreover this automorphism may be induced by an automorphism of the orbifold $\mathring O$ obtained from $O$ by removing a small disc about each cone point.
\end{prop}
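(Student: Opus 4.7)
The plan is to first reduce to showing the corresponding statement for the surface $\mathring O$ (whose fundamental group is free), and then to construct the required automorphism of $\widehat{\pi_1\mathring O}$ by induction on the genus $g$ of $\mathring O$, using the preceding proposition as the base case and a single HNN extension per inductive step.

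For the reduction, note that $F := \pi_1\mathring O$ is free, and $\widehat{\ofg[O]}$ is the quotient of $\hat F$ by the closed normal subgroup $\bar K$ generated by $\{a_i^{p_i}\}$. This identification uses the efficiency of the natural graph-of-groups decomposition of \ofg[O] with the finite cone-point subgroups $\Z/p_i\Z$ as vertex groups. An automorphism $\tilde\psi$ of $\hat F$ sending each boundary loop of $\mathring O$ to a conjugate of its $\mu$-th power sends $a_i^{p_i}$ to a conjugate of $(a_i^{p_i})^\mu\in\bar K$, so $\tilde\psi(\bar K)\subseteq\bar K$. Applying the same reasoning to $\tilde\psi^{-1}$ (which is of type $\mu^{-1}$) forces equality, so $\tilde\psi$ descends to the desired exotic type-$\mu$ automorphism of $\widehat{\ofg[O]}$. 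It therefore suffices to build such a $\tilde\psi$ on $\hat F$.

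For the inductive construction, the base case $g=0$ is the preceding proposition, since $\mathring O$ is then a sphere with $r+s+1$ boundary components. For $g\geq 1$, I would realise $\mathring O$ as the result of gluing two boundary circles of a surface $\mathring O'$ of genus $g-1$ with $r+s+3$ boundary components. Letting $x,y$ denote the loops around the two identified circles, this yields an HNN presentation
\[\pi_1\mathring O = \big\langle \pi_1\mathring O',\,t \,\big|\, t^{-1}xt = y^{-1}\big\rangle.\]
Since $\langle x\rangle$ and $\langle y\rangle$ are separable in the free group $\pi_1\mathring O'$, this HNN splitting is efficient and passes to profinite completions. By induction there is an exotic type-$\mu$ automorphism $\psi'$ of $\widehat{\pi_1\mathring O'}$; writing $\psi'(x) = g x^\mu g^{-1}$ and $\psi'(y) = h y^\mu h^{-1}$, one defines $\psi$ on the profinite HNN extension by $\psi|_{\widehat{\pi_1\mathring O'}}=\psi'$ and $\psi(t) = g t h^{-1}$. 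A short calculation using $t^{-1}x^\mu t = y^{-\mu}$ verifies that the HNN relation is respected; surjectivity is clear (the image contains all of $\widehat{\pi_1\mathring O'}$ and the element $gth^{-1}$, hence $t$ itself), and the Hopfian property of finitely generated profinite groups upgrades $\psi$ to an automorphism. The boundary loops of $\mathring O$ are precisely the unglued boundary loops of $\mathring O'$, on which $\psi$ agrees with $\psi'$; hence $\psi$ is exotic of type $\mu$ as required.

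The main obstacle I anticipate is verifying that the HNN splitting is genuinely efficient at the level of the full profinite completion, so that the abstract HNN extension's profinite completion really is the profinite HNN extension one wants to work with; this is where the separability of cyclic subgroups in $\pi_1\mathring O'$ is crucial. Once this is established, the definition $\psi(t) = gth^{-1}$ is forced by compatibility with the HNN relation, and the remainder of the inductive step is routine.
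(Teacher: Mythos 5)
Your proof is correct and follows essentially the same route as the paper: reduce to the punctured-sphere case supplied by the preceding proposition, extend the exotic automorphism over HNN extension(s) realising the handles (you add one handle per inductive step, whereas the paper performs all $g$ HNN extensions at once over a single punctured sphere), deduce bijectivity from surjectivity plus Hopficity of finitely generated profinite groups, and finally kill the relations $a_i^{p_i}$ to descend to $\widehat{\ofg[O]}$. The efficiency issue you single out as the main obstacle is in fact a non-issue: to define the endomorphism of $\widehat{\pi_1\mathring O}$ it suffices to specify a homomorphism from the discrete HNN presentation of $\pi_1\mathring O$ into the profinite target and extend by the universal property of the completion (and in any case finitely generated subgroups of free groups are separable with the full profinite topology induced on them), which is implicitly how the paper proceeds; likewise the identification $\widehat{\ofg[O]}\iso\widehat{\pi_1\mathring O}/\bar K$ needs only right-exactness of profinite completion, not efficiency of a graph-of-groups decomposition.
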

\begin{proof}
Let the fundamental group of $O$ have presentation \[B= \left< a_1,\ldots, a_r, e_1, \ldots, e_s, u_1, v_1, \ldots, u_g,v_g\mid a_i^{p_i}\right>\] with notation as in Definition \ref{exauto} and let $F$ be a discrete free group of rank $r+s+2g$ on a generating set \[\{a_1,\ldots, a_r, e_1, \ldots, e_s, v'_1, v_1, \ldots, v'_g,v_g\}\] realised as the fundamental group of an appropriately punctured sphere $S$. Let $\psi$ be an exotic automorphism of $S$ of type $\mu$ and let 
\[ \psi(v_i) = (v_i^\mu)^{g_i},\quad \psi(v'_i) = ({v'_i}^\mu)^{g'_i}\]
Let $G$ be the fundamental group of $\mathring O$ and write $G$ as an iterated HNN extension
\[\left< F, u_1,\ldots, u_g\mid v'_i = (v_i^{-1})^{u_i}\right>\]
Then we may extend $\psi$ over the iterated HNN extension $\hat G$ by setting 
\[\psi(u_i) = g_i^{-1} u_i g'_i \]
Note that the map $\psi$ so defined is actually a surjection, by the standard criterion for HNN extensions- we have a surjection on the vertex group and, on factoring out the vertex group, an epimorphism of free groups (in this case the identity). Since all finitely generated profinite groups are Hopfian, we do have an isomorphism witnessing the fact that $\mathring O$ admits an exotic automorphism of type $\mu$. Note that $u_i$ does not represent a boundary component of $O$, so it is of no concern that its conjugacy class is not preserved.

 Note that $\psi$ preserves the normal subgroup generated by the $a_i^{p_i}$, so descends to an automorphism of the quotient group of $\hat G$ by the normal subgroup generated by these elements. This quotient group is precisely the profinite completion of the fundamental group of $O$ as required.
\end{proof}
\begin{prop}\label{orbexautononor}
Let $O$ be a non-orientable 2-orbifold with $s+1$ boundary components. Let $\mu\in\hat\Z {}^{\!\times}$ and let $\sigma_0, \ldots, \sigma_s\in\{\pm1\}$. Then $O$ admits an exotic automorphism of type $\mu$ with signs $\sigma_0, \ldots, \sigma_s\in\{\pm1\}$. Moreover this automorphism may be induced by an automorphism of the orbifold $\mathring O$ obtained from $O$ by removing a small disc about each cone point.
\end{prop}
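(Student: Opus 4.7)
The plan is to mimic the strategy of Proposition~\ref{orbexauto} with two modifications: replace each HNN extension (representing a handle in the orientable case) by an amalgamated free product (representing a crosscap), and separately handle the signs $\sigma_0,\ldots,\sigma_s$ via cross-cap slide automorphisms. Concretely, I would realise $G = \pi_1(\mathring O)$ as an iterated amalgamated free product
\[
G = F *_{\langle v_1 = u_1^2 \rangle} \langle u_1 \rangle *_{\langle v_2 = u_2^2 \rangle} \cdots *_{\langle v_g = u_g^2 \rangle} \langle u_g \rangle,
\]
where $F$ is the free group on $\{a_1,\ldots,a_r,e_1,\ldots,e_s,v_1,\ldots,v_g\}$, viewed as the fundamental group of the punctured sphere $S$ whose boundary components are $a_i, e_0, e_1, \ldots, e_s, v_1, \ldots, v_g$ (with $e_0$ the inverse of an appropriate ordered product of the rest).

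First I would construct an exotic automorphism of type $\mu$ with all signs equal to $+1$. By Proposition~\ref{orbexauto} applied to $S$, there is an exotic automorphism $\psi_S$ of $\hat F$ of type $\mu$ with $\psi_S(v_i) = (v_i^\mu)^{k_i}$ for elements $k_i \in \hat F$. Extend $\psi_S$ to an endomorphism $\psi_+$ of $\hat G$ by setting $\psi_+(u_i) = k_i^{-1} u_i^\mu k_i$; this choice is compatible with the amalgamating relations $v_i = u_i^2$, and since the resulting endomorphism is surjective it is an automorphism by Hopficity of finitely generated profinite groups. Because all the conjugating elements lie in $\hat F \subseteq \ker(o)$, the automorphism $\psi_+$ is exotic of type $\mu$ with all signs equal to $+1$.

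The next step is a reduction: I would show that the general sign pattern follows from the type-$1$ case. A direct computation confirms that if $\psi_1$ is a type-$1$ exotic automorphism of $\hat B'$ with signs $(\sigma_0,\ldots,\sigma_s)$, then $\psi_1 \circ \psi_+$ is a type-$\mu$ exotic automorphism with the same signs: the orientation-preserving conjugators produced by $\psi_+$ (which lie in $\hat F$) do not perturb the orientations of the conjugators produced by $\psi_1$, and the powers multiply correctly. It therefore suffices to construct, for any prescribed sign pattern, a type-$1$ exotic automorphism of $\hat B'$. Such automorphisms arise from the mapping class group of $O$: for each index $j$ with $\sigma_j = -1$, a cross-cap slide (Y-homeomorphism) of $e_j$ through some crosscap is a self-homeomorphism of $O$ whose induced action on $\pi_1 O$ fixes the conjugacy class of every $a_k$ and of every $e_k$ with $k \neq j$, and sends $e_j$ to a conjugate of $e_j^{-1}$ by an orientation-reversing element. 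Composing one such slide for each boundary component whose sign is to be flipped yields the desired type-$1$ automorphism, which passes to $\widehat{\pi_1 O} = \hat B'$ by functoriality.

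The main obstacle is verifying that these cross-cap slides exist with the claimed effect on $\pi_1 O$ in every configuration, particularly the degenerate case $g = 1$ in which many boundary components must share a single available crosscap. This is essentially a topological exercise using the standard description of the mapping class group of a non-orientable surface; alternatively, for small $g$ it can be confirmed by explicit algebraic calculation, in the spirit of the Möbius band computation that precedes this proposition. Once these type-$1$ sign-flipping automorphisms are in hand, composing with $\psi_+$ produces an exotic automorphism of $\hat G$ of type $\mu$ with the prescribed signs. Finally, as in Proposition~\ref{orbexauto}, the resulting map preserves the normal closure of the $a_i^{p_i}$ (since $\psi(a_i) \sim a_i^\mu$), and so descends to the required automorphism of the quotient $\hat B' = \widehat{\pi_1 O}$.
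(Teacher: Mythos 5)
Your construction of the type-$\mu$ automorphism with all signs $+1$ is exactly the paper's: the same decomposition of $\pi_1\mathring O$ as an amalgam of the punctured-sphere group $F$ on $\{a_i,e_i,v_i\}$ with the cyclic groups $\gp{u_i}$ along $v_i=u_i^2$, the same extension $u_i\mapsto (u_i^\mu)^{k_i}$ using the conjugators coming from the exotic automorphism of the sphere (so that all conjugators lie in $\ker o$), and the same surjectivity-plus-Hopficity argument; likewise your plan of composing with type-$1$ sign-flipping automorphisms induced from the discrete group is the paper's second step. The one point where you stop short --- the existence of the sign flips, which you yourself flag as the main obstacle, ``particularly the degenerate case $g=1$'' --- is precisely where the paper does its (short) remaining work: it writes the flip of $\sigma_0$ down explicitly as the automorphism of the discrete group sending $u_g\mapsto (a_1\cdots a_r e_1\cdots e_s u_1^2\cdots u_{g-1}^2)^{-1}u_g^{-1}$ and fixing all other generators; a two-line computation then gives $e_0\mapsto (e_0^{-1})^{u_g^{-1}}$ while every $a_i$ and every other $e_i$ is literally fixed, and the conjugator $u_g$ is orientation-reversing, exactly as the definition of an exotic automorphism with $\sigma_0=-1$ demands. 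Topologically this is a boundary slide of $e_0$ through a \emph{single} crosscap, not a crosscap slide (Y-homeomorphism): the Y-homeomorphism genuinely requires two crosscaps, which is why invoking it creates your spurious worry about $g=1$. Since a boundary slide needs only one crosscap, the explicit map above (and its analogues for the other $e_j$, changing one sign at a time) settles every case $g\ge 1$; once that verification is inserted, your argument coincides with the paper's proof. One further point you handled correctly but should keep explicit when composing: the type-$1$ automorphism must commute with the orientation homomorphism $o$ (true here because it is induced by an automorphism of the discrete group sending each generator to a word of the same $o$-value), as otherwise the signs of the composite would not be the ones claimed.
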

\begin{proof}
Let the fundamental group of $O'$ be
\[B'= \left< a_1,\ldots, a_r, e_1, \ldots, e_s, u_1,\ldots, u_g\mid a_i^{p_i}\right>\]
with notation as in Definition \ref{exauto}. Write $B'$ as an amalgamated free product 
\[\left< a_1,\ldots, a_r, e_1, \ldots, e_s, u_1, v_1,\ldots, u_g, v_g\mid a_i^{p_i}, u_i^2 = v_i \right>\]
Let $F$ be a free group on $r+s+g$ generators with generating set 
\[\{a_1,\ldots, a_r, e_1, \ldots, e_s, v_1, \ldots, v_g\}\]
considered as the fundamental group of a $(r+s+g+1)$-punctured sphere $S$. Take an exotic automorphism of $S$ of type $\mu$ and extend over the amalgam 
\[\hat F' = \left< a_1,\ldots, a_r, e_1, \ldots, e_s, u_1, v_1,\ldots, u_g, v_g\mid , u_i^2 = v_i \right> \] 
by sending each $u_i$ to $(u_i^\mu)^{g_i}$, where $\psi(v_i) = (v_i^\mu)^{g_i}$. Note that all conjugating elements $g_i$ lie in the completion of the fundamental group of the orientable surface $S$, which is contained in the kernel of the orientation homomorphism.

To introduce the signs $\sigma_i$ we may compose with automorphisms induced by automorphisms of the discrete group $F'$. Of course it suffices to change the signs one at a time. For instance to realise $\sigma_0 =-1$ we may compose with the map given by 
\[ v_g \mapsto (a_1\cdots a_r e_1\cdots e_s v_1^2\cdots v_{g-1}^2)^{-1}v_g^{-1} \]
and by the identity on all other generators. This sends 
\begin{eqnarray*}
 e_0 &=& (a_1\cdots a_r e_1\cdots e_s v_1^2\cdots v_{g}^2)^{-1}\\
 &\mapsto & \left( v_g^{-1}(a_1\cdots a_r e_1\cdots e_s v_1^2\cdots v_{g-1}^2)^{-1}v_g^{-1} \right)^{-1}\\
 &=& v_g a_1\cdots a_r e_1\cdots e_s v_1^2\cdots v_{g-1}^2 v_g\\
 &=& v_g e_0^{-1} v_g^{-1}
\end{eqnarray*} 
as required.
 Finally factoring out the relations $a_i^{p_i}$ gives the required exotic automorphism of $O'$.
\end{proof}
\begin{theorem}\label{SFSexmorph}
Let $M$, $N$ be orientable \SFS{}s. Suppose that $(M,N)$ is a Hempel pair with scale factor $\kappa=\lambda/\mu$ for some $\lambda,\mu\in\hat\Z{}^{\!\times}$. Then we may identify the base orbifolds of $M$ and $N$ with the same $O$ and choose `standard form' presentations 
\[G=\pi_1 M=\left< a_1,\ldots, a_r, e_1, \ldots, e_s, u_1, \ldots, h\mid a_i^{p_i} h^{q_i}, h^g=h^{o(g)}\, \right>\] 
and 
\[H=\pi_1 N=\left< a_1,\ldots, a_r, e_1, \ldots, e_s, u_1, \ldots, h\mid {a_i}^{p_i} h^{q'_i}, h^g=h^{o(g)} \right>\]
for the fundamental groups where $q'_i$ is congruent to $\lambda\mu^{-1} q_i$ modulo $p_i$ and where $o$ is the orientation homomorphism (see Definition \ref{deforcover} below).
 Let $\rho_0,\ldots,\rho_s$ be any elements of $\hat\Z$ such that 
\begin{equation}\label{DThypotheis}
\sum \rho_i = \lambda\sum  \frac{q_i}{p_i}-\mu\sum  \frac{q'_i}{p_i}
\end{equation} 
and let $\sigma_0,\ldots, \sigma_s\in\{\pm1\}$. Suppose that all $\sigma_i$ are $+1$ if the base orbifold is orientable. 

Let $B=\ofg[O]$. Then there exists an isomorphism of short exact sequences
\[\begin{tikzcd}
1\ar{r} & \hat\Z\ar{r} \ar{d}{\cdot \lambda} & \hat G \ar{r} \ar{d}{\Psi} & \hat B \ar{r} \ar{d}{\psi} & 1 \\
1\ar{r} & \hat\Z\ar{r} & \hat H \ar{r} & \hat B \ar{r} & 1
\end{tikzcd}\]
where $\psi$ is an exotic automorphism of type $\mu$ and where the map on each boundary component is described by
\[\Psi(h) = h^\lambda, \quad \Psi(e_i) = (e_i^{\sigma_i\mu})^{g_i}h^{\rho_i} \]
for some $g_i\in \hat H$ mapping to $\sigma_i$ under the orientation homomorphism on $\hat B$.
\end{theorem}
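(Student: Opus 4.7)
The plan is to construct the isomorphism $\Psi$ explicitly by defining it on generators and checking the relations, using the exotic automorphism produced by Propositions \ref{orbexauto} or \ref{orbexautononor} as the key input.

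First, I would invoke Propositions \ref{orbexauto} and \ref{orbexautononor} to obtain an exotic automorphism $\psi\colon \hat B \to \hat B$ of type $\mu$ with the prescribed signs $\sigma_0,\ldots,\sigma_s$. After composing with inner automorphisms and with the automorphisms induced by Dehn twists along the boundary annuli (which modify a chosen $g_i$ by powers of $e_i$), one may arrange the conjugating elements $g_i$ appearing in $\psi(e_i) = (e_i^{\sigma_i\mu})^{g_i}$ to match any prescribed element with the correct orientation value; this freedom is what ultimately lets us hit every admissible $(\rho_0,\ldots,\rho_s)$.

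Next, I would define $\Psi\colon \hat G \to \hat H$ on generators by
\[
\Psi(h) = h^\lambda, \qquad \Psi(a_i) = a_i^\mu h^{\nu_i}, \qquad \Psi(e_i) = (e_i^{\sigma_i\mu})^{g_i} h^{\rho_i} \text{ for } i=1,\ldots,s,
\]
with $\Psi(u_j),\Psi(v_j)$ chosen as any lifts of $\psi(u_j),\psi(v_j)$ to $\hat H$. Here $\nu_i\in\hat\Z$ is the unique element satisfying $\nu_i p_i = \mu q'_i - \lambda q_i$, which exists because the Hempel pair hypothesis gives $q'_i \equiv \lambda\mu^{-1}q_i\pmod{p_i}$. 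The cone-point relations $\Psi(a_i)^{p_i}\Psi(h)^{q_i}=1$ and the (twisted) centrality of $\Psi(h)$ are then immediate by direct calculation.

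The central task is the single remaining relation $e_0 = (a_1\cdots a_r e_1\cdots e_s [u_1,v_1]\cdots [u_g,v_g])^{-1}$ (or its non-orientable counterpart): define $\Psi(e_0)$ via this relation and verify it equals $(e_0^{\sigma_0\mu})^{g_0}h^{\rho_0}$. Collecting the $h$-contributions (weighted by the orientation character $o$ in the non-orientable case) shows that $\Psi(e_0)$ projects to $\psi(e_0)$ in $\hat B$ and carries an $h$-power that, after sign-twists, simplifies via \eqref{DThypotheis} to exactly $\rho_0$. Surjectivity of $\Psi$ follows since $\psi$ is surjective and $\lambda$ is a unit in $\hat\Z$; Hopficity of the finitely generated profinite group $\hat G$ then upgrades $\Psi$ to an isomorphism, and the resulting diagram of central extensions commutes by construction.

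The main obstacle is the bookkeeping for the $e_0$ relation: one must track the exact contributions to the total $h$-exponent coming from the cone-point factors $\nu_i$, the boundary-component factors $\rho_i$, the conjugating elements $g_i$, and the commutator (or square) terms — including sign-reversals governed by $o$ in the non-orientable setting — and verify that the condition \eqref{DThypotheis} is precisely what balances these. No genuinely new ideas beyond the explicit construction should be needed, but some care is required to ensure that the exotic automorphism $\psi$ produced in the first step can be adjusted to accommodate arbitrary admissible $g_i$ rather than only those arising from the direct inductive construction in Proposition \ref{orbexauto}.
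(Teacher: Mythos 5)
Your proposal follows essentially the same route as the paper: take the exotic automorphism of type $\mu$ with the prescribed signs from Propositions \ref{orbexauto}/\ref{orbexautononor}, lift it to $\hat H$ with fibre corrections $h^{\nu_i}$ (the paper's $\theta_i$) on the cone generators and $h^{\rho_i}$ on the boundary generators, check the cone relations from the defining equation of $\nu_i$, check the $e_0$ condition by collecting $h$-exponents and invoking \eqref{DThypotheis}, and conclude that $\Psi$ is an isomorphism since it is an isomorphism on fibre and base (equivalently surjectivity plus Hopficity). The only caveats are cosmetic: the $g_i$ in the statement are not prescribed in advance, so no Dehn-twist adjustment of the conjugating elements is needed, $\Psi(a_i)$ should be the lift of $\psi(a_i)$ (a conjugate of $a_i^\mu$) times $h^{\nu_i}$ rather than literally $a_i^\mu h^{\nu_i}$, and in the non-orientable case no orientation-weighting of the $h$-exponents actually occurs because all the relevant factors ($a_i$, $e_i$, $u_j^2$ and the conjugators) are orientation-preserving --- with these readings your verification is exactly the paper's.
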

\begin{proof}
Define $\theta_i$ to be the unique element of $\hat\Z$ such that 
\[\mu q'_i = \lambda q_i + \theta_i p_i \]
and note that the hypothesis of the theorem forces
\[\sum \rho_i + \sum \theta_j = 0 \] 
Let $\tilde\psi$ be an exotic automorphism of $O\smallsetminus(\text{nbhd of cone points})$ of type $\mu$ as in Proposition \ref{orbexauto} or \ref{orbexautononor} such that 
\[\tilde\psi(a_i) = ({a_i}^\mu)^{f_i}, \quad \tilde\psi(e_i)=({e_i}^{\sigma_i \mu})^{g_i} \]
Let $\iota$ be the natural homomorphism to $\hat H$ from the free profinite group on the generators $\{a_i, e_i, u_i, (v_i)\}$. As in Propositions \ref{orbexauto} and \ref{orbexautononor} the map $\tilde\psi$ induces an exotic automorphism of $O$. Now define $\Psi\colon \hat G\to\hat H$ as follows:
\[\Psi(h)=h^\lambda,\quad\Psi(a_i)=\iota\tilde\psi(a_i){h}^{\theta_i}, \quad \Psi(e_i)=\iota\tilde\psi(e_i){h}^{\rho_i} \]  
and by $\iota\tilde\psi$ on the remaining generators. This map $\Psi$ is well-defined by the definition of the $\theta_i$ and is an isomorphism as it induces isomorphisms on both fibre and base. It has all the advertised properties by construction, except possibly the condition on $\Psi(e_0)$. For $O$ non-orientable we have
\begin{eqnarray*}
 \Psi(e_0) &=& \Psi(a_1\cdots a_r e_1\cdots e_s u_1^2\cdots u_g^2)^{-1}\\
 &=& \left(\iota\tilde\psi(a_1\cdots u_g^2) h^\wedge\big\{\sum \theta_i + \sum_{i\neq 0}\rho_i \big\}\right)^{-1}\\
 &=& \iota\tilde\psi(e_0) h^{\rho_0} = (e_0^{\sigma_0 \mu})^{g_0}h^{\rho_0}
\end{eqnarray*}
as required, noting that all elements $\iota\tilde\psi(u_i^2)$ et cetera commute with $h$. The case of orientable $O$ is similar.
\end{proof}

\begin{defn}\label{deforcover}
 If $M_v$ is a closed Seifert fibre space, there is an {\em orientation homomorphism} from $\widehat{\pi_1 M_v}$ to $\{\pm 1\}$ with kernel the centraliser of the canonical fibre subgroup. Let $M_v^{\rm or}$ be the {\em orientation cover}, i.e.\ the cover of $M_v$ corresponding to this subgroup.
 
 If $M$ is a graph manifold with JSJ decomposition $(X,M_\bullet)$, let $M^{\rm or}$ be the regular cover of $M$ induced by taking the orientation cover of each major vertex with non-orientable base orbifold. Denote the JSJ decomposition by $(X^{\rm or}, M^{\rm or}_\bullet)$. Note that, since every loop in $X$ may be realised by a loop in $M$ which lifts to $M^{\rm or}$, the graph $X$ is bipartite if and only if $X^{\rm or}$ is bipartite, and the bipartition on $X$ lifts to that on $X^{\rm or}$. Also note that $(\overline{M_v})^{\rm or} =\overline{(M_v^{\rm or})}$ where the bar denotes the filled vertex space.
\end{defn}	
Suppose that we have two graph manifolds $M$ and $N$ be graph manifolds with JSJ decompositions $(X, M_\bullet)$ and $(Y, N_\bullet)$, and a graph isomorphism $\phi\colon X\to Y$ taking vertices with non-orientable base orbifold to vertices with non-orientable base orbifold, and major vertices to major vertices. There is also a graph isomorphism $\phi^{\rm or}\colon X^{\rm or}\to Y^{\rm or}$ covering $\phi$. If there is a homeomorphism of $M^{\rm or}$ with $N^{\rm or}$ covering the map $\phi^{\rm or}$, then $M$ and $N$ are homeomorphic. For the fact that the graph isomorphism covers $\phi$ and that the different sorts of vertices match up correctly implies that the $\Z/2$ actions on each vertex space of $M^{\rm or}$ and $N^{\rm or}$ are matched up by this homeomorphism (or at least, one isotopic to it).

We will now move towards the final theorem governing the profinite completions of graph manifold groups. As in the discussion preceding Theorem \ref{GMrigidor}, choosing a generator of the fibre subgroup of each vertex group allows us to define a matrix representing the gluing maps on each edge $e$. Before this matrix was independent under conjugacy of the edge group. When base orbifolds can be non-orientable, all these invariants are independent of conjugacy only up to sign (unless both adjacent vertex groups have orientable base). The reader should not then be unduly surprised by the presence of sign indeterminacy in the following theorem- it is a consequence of ambiguities in the graph manifolds themselves, rather than anything mysterious concerning the profinite completions. Also note that the definition of `total slope' only involved the ratios $\delta/\gamma$ so is well-defined.

\begin{theorem}\label{GMrigid}
	Let $M$ and $N$ be graph manifolds with JSJ decompositions $(X, M_\bullet)$ and $(Y, N_\bullet)$ respectively.
	 \begin{enumerate}
		\item If $X$ is not bipartite, then $\pi_1 M$ is profinitely rigid.
		\item If $X$ is bipartite, on two sets $R$ and $B$, then $\pi_1 M$ and $\pi_1 N$ have isomorphic profinite completions if and only if, for some choices of generators of fibre subgroups, there is a graph isomorphism $\phi\colon X\to Y$ and some $\kappa\in\hat{\Z}{}^{\!\times}$ such that:
		\begin{enumerate}
\item[(a)] For each edge $e$ of $X$, $\gamma(\phi(e))=\pm_e \gamma(e)$, where the sign is positive if both end vertices of $e$ have orientable base.
\item[(b)] The total slope of every vertex space of $M$ or $N$ vanishes
\item[(c1)] If $d_0(e)=r\in R$, $\delta(\phi(e))=\pm_e \kappa \delta(e)$ modulo $\gamma(e)$, and $(M_r, N_{\phi(r)})$ is a Hempel pair of scale factor $\kappa$.
\item[(c2)] If $d_0(e)=b\in B$, $\delta(\phi(e))=\pm_e \kappa^{-1} \delta(e)$ modulo $\gamma(e)$, and $(M_b, N_{\phi(b)})$ is a Hempel pair of scale factor $\kappa^{-1}$. 
		\end{enumerate}
	\end{enumerate}
\end{theorem}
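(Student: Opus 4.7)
The plan is to follow closely the strategy of Theorem \ref{GMrigidor} for the forward direction, reducing to the orientable-base case by passing to orientation covers, and then to assemble local isomorphisms built via Theorem \ref{SFSexmorph} for the reverse direction.

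For the forward direction, suppose $\Phi\colon\widehat{\pi_1 M}\to\widehat{\pi_1 N}$ is an isomorphism. By Theorem \ref{decompfixed} we obtain a graph isomorphism $\phi\colon X\to Y$ together with compatible vertex isomorphisms $\Phi_x\colon\widehat{\pi_1 M_x}\to\widehat{\pi_1 N_{\phi(x)}}$. For each major vertex $x$ with possibly non-orientable base, the fibre subgroup is central only in the index-$2$ subgroup cut out by the orientation homomorphism; this orientation homomorphism is detected intrinsically (as the centraliser of the canonical fibre subgroup, per the last part of Section \ref{SFSprelims}). Hence $\Phi$ restricts to an isomorphism $\widehat{\pi_1 M^{\rm or}}\to\widehat{\pi_1 N^{\rm or}}$ of the orientation covers of Definition \ref{deforcover}, and all major vertex spaces of $M^{\rm or}$ and $N^{\rm or}$ have orientable base. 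Theorem \ref{GMrigidor} then applies: in the non-bipartite case it yields $M^{\rm or}\cong N^{\rm or}$ via a homeomorphism covering $\phi^{\rm or}$, and because this homeomorphism is equivariant with respect to the deck $\Z/2$-actions (which are determined by the centraliser of the fibre, hence preserved by $\Phi$) it descends to a homeomorphism $M\cong N$ covering $\phi$, proving rigidity in Case 1. In the bipartite case one obtains $\kappa$ and conditions (a), (b), (c1), (c2) for $M^{\rm or}, N^{\rm or}$; the sign indeterminacy $\pm_e$ then appears when one descends, since reversing the chosen generator of a fibre subgroup in a vertex group with non-orientable base is achieved by conjugation by an orientation-reversing element of the base.

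For the reverse direction in the bipartite case, given conditions (a)--(c2) the goal is to build $\Phi\colon\widehat{\pi_1 M}\to\widehat{\pi_1 N}$ by specifying each $\Phi_x$ and verifying compatibility on each edge group. At every major vertex $x$, Theorem \ref{SFSexmorph} (applied with the scale factor $\kappa$ or $\kappa^{-1}$ depending on whether $x\in R$ or $x\in B$, and with the sign data $\sigma_i$ dictated by conditions (a) and (c)) produces an isomorphism $\Phi_x$ parametrised by a free choice of elements $\rho_0,\ldots,\rho_s\in\hat\Z$ satisfying the linear constraint \eqref{DThypotheis}. The crucial observation is that the sum on the right-hand side of \eqref{DThypotheis} equals $\lambda\tau(x)-\mu\tau(\phi(x))$ plus the edge-gluing contributions, so condition (b) (vanishing of total slopes on both sides) is exactly what makes this constraint compatible with the edge-gluing data prescribed by conditions (a), (c1), (c2). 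Minor vertices contribute matching isomorphisms that are essentially forced by their single boundary component. One then feeds the resulting $\{\Phi_x\}$ together with the chosen conjugating elements $\{g_e\}$ into the universal property of the profinite fundamental group of the graph of groups $(X,\widehat{\pi_1 M_\bullet})$, producing $\Phi$; injectivity and surjectivity follow from the corresponding properties of each $\Phi_x$ together with Theorem \ref{JSJefficient} (efficiency of the JSJ decomposition), which guarantees that the profinite graphs-of-groups structure faithfully reflects the assembly.

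The main obstacle is the bookkeeping in the reverse direction: one must simultaneously choose, for each edge $e$ of $X$, both a conjugating element $g_e$ in the target vertex group (to realise the prescribed image of the boundary-component element $e_i$) and a twist parameter $\rho_e\in\hat\Z$, in such a way that (i) the gluing matrix equation \eqref{matreqn} of Theorem \ref{GMrigidor} is satisfied with the specified data, (ii) the local constraint \eqref{DThypotheis} holds at every vertex simultaneously, and (iii) the choices fit together consistently under the free product with amalgamation / HNN structure at edges not in a chosen maximal subtree. The total-slope condition (b) and the congruence in (c1)/(c2) are precisely engineered to make this system solvable; verifying this amounts to a linear-algebra calculation over $\hat\Z$ analogous to (but more elaborate than) the one carried out at the end of the proof of Theorem \ref{GMrigidor}, with the additional sign data from Proposition \ref{orbexautononor} tracking the non-orientable vertices. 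Once the system is solved, the isomorphism assembles immediately, completing the proof.
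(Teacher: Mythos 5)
Your overall strategy matches the paper's: pass to orientation covers and invoke Theorem \ref{GMrigidor} for the forward direction, and assemble local isomorphisms from Theorem \ref{SFSexmorph} for the reverse. The non-bipartite case and the use of condition (b) to satisfy the constraint \eqref{DThypotheis} are correctly identified. But there are two genuine gaps.

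First, in the forward direction your account of where the sign indeterminacy $\pm_e$ comes from does not establish the stronger clause of condition (a): that the sign can be taken positive on every edge both of whose endpoints have orientable base. A priori, the two lifts of such an edge in $X^{\rm or}$ could impose inconsistent fibre-orientation choices upon descent. The paper's remedy is to form the auxiliary graphs $A_M$, $A_M^{\rm or}$ (and likewise for $N$) by removing the non-orientable-base vertices and all their preimages, observe that components of $A_M^{\rm or}$ are isomorphic copies of components of $A_M$, and then fix orientations by selecting one lift of each component and inheriting the choices made in Theorem \ref{GMrigidor}. Without an argument of this shape you only recover condition (a) up to a sign on \emph{all} edges, which is weaker than stated.

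Second, in the reverse direction your appeal to the universal property does not by itself yield an isomorphism. The universal property produces a homomorphism $\Omega\colon\widehat{\pi_1 M}\to\widehat{\pi_1 N}$ once the edge relations are verified, but injectivity and surjectivity do not transfer from the vertex isomorphisms $\Phi_x$ (a map between amalgams that is injective on factors need not be injective globally), and Theorem \ref{JSJefficient} does not close this gap. Moreover, because of the conjugating elements $g_e$ and $f_j$ threaded through the inductive construction along the spanning tree, $\Omega$ is not literally a morphism of graphs of profinite groups --- a subtlety the paper explicitly flags. The paper therefore proves surjectivity directly by an induction tracking the specific form $f_j = k_j\cdots k_1$ of the conjugators, builds a surjection $\widehat{\pi_1 N}\to\widehat{\pi_1 M}$ symmetrically, and invokes the Hopfian property of finitely generated profinite groups to conclude both maps are isomorphisms. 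Your proposal needs some version of this argument (or a careful reformulation making $\Omega$ an honest morphism of graphs of groups) rather than the claim that ``the isomorphism assembles immediately.''
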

\begin{rmk}
	As in Theorem \ref{GMrigidor}, these conditions (a)-(c1) are almost equivalent to the requirement that the filled manifolds $\overline{M_r}$ and $\overline{N_{\phi(r)}}$ form a Hempel pair. The only difference is that forming the filled manifold forgets the orientation of the fibre of the adjacent manifold; hence it would not guarantee that the signs may be fixed as in the final part of (a). This is a necessary condition, as was seen in Theorem \ref{GMrigidor}.
\end{rmk}
\begin{proof}
	We will first deduce the `only if' direction from Theorem \ref{Gdecomp}. Let $\Psi\colon\widehat{\pi_1 M}\to \widehat{\pi_1 N}$ be an isomorphism and let $\phi\colon X\to Y$ be the induced graph isomorphism.  Since the isomorphism $\Psi$ preserves centralisers, there is an induced isomorphism  \[\Psi\colon\widehat{\pi_1 M^{\rm or}}\to \widehat{\pi_1 N^{\rm or}}\] and an induced isomorphism $\phi^{\rm or}\colon X^{\rm or}\to Y^{\rm or}$ which covers $\phi$.
	
	Suppose first that $X$ is not bipartite. Then neither is $X^{\rm or}$, so by Theorem \ref{GMrigidor} there is a homeomorphism $M^{\rm or}\to N^{\rm or}$ covering $\phi^{\rm or}$. By the comments following Definition \ref{deforcover}, it follows that $M$ is homeomorphic to $N$.
	
	Now suppose that $X$ is bipartite. We claim that the conclusion of Theorem \ref{GMrigidor} applied to $M^{\rm or}$ and $N^{\rm or}$ immediately forces the equations in (a)-(c2) to hold. For some choices of fibre orientations of the vertex spaces in the covers give the equations in the theorem statement for evey lift of an edge $e$ of $X$, and different lifts of $e$ have values of $\gamma, \delta$ et cetera equal to those of $e$, up to a choice of sign. Hence all the equations of the statement hold up to sign. It only remains to show that we may fix the signs on edges between two spaces of orientable base as in (a). Consider the graphs $A_M$ and $A^{\rm or}_M$ obtained from $X$ and $X^{\rm or}$ by removing all major vertices with non-orientable base, and all preimages of those vertices. Of course, $A_M$ contains all the edges we are concerned with. The components of $A_M^{\rm or}$ are homeomorphic copies of the components of $A$. Similarly define $A_N$ and $A_N^{\rm or}$. So choosing orientations on the fibres of $M^{\rm or}$ and $N^{\rm or}$  as in Theorem \ref{GMrigidor} gives a consistent choice of fibre orientations on the vertex spaces each component of $A_M$ or $A_N$ satisfying (a), simply by choosing some lift of the component to (for example) $A_M^{\rm or}$ and inheriting orientations from there. Hence the theorem is true in this case as well.
	
	Now let us turn our attention to the `if' direction of (2). Suppose that $M$ and $N$ are as in the conditions of that theorem. We will build a more or less explicit isomorphism of profinite groups, defined with respect to a presentation of $\pi_1 M$. We say `more or less' explicit as we cannot describe precisely the conjugating elements in exotic automorphisms of orbifolds. This unfortunately requires a small hurricane of notation.
	
	First let us describe the presentations that we will use. We will use primes to denote the invariants $\gamma(e), \delta(e)$ et cetera deriving coming from $N$ as opposed to $M$. That is, $\gamma'(e) = \gamma(\phi(e))$. We may as well identify $X$ and $Y$ using the isomorphism $\phi$. Choose a maximal subtree $T$ in $X$.  We may also identify the base orbifolds of $M_x$, $N_x$. Fix some presentation for each such orbifold. We will use the same letters to denote the generating sets of $\pi_1 M_x$, $\pi_1 N_x$ in a presentation coming from some presentation for the orbifold group, with $h_x$ denoting the fibre subgroup (with the choice of generator in the theorem statement), and with the letter $e$ denoting the element describing a meridian on the boundary torus if $x$ representing the edge $e$ (with $d_0(e)=x$). The meridian on the other vertex group adjacent to $e$ will then be denoted $\bar e$, being $e$ with the opposite orientation. Then the conditions of the theorem, and  the definitions of the invariants involved, specify the relations in $\pi_1 M$ and $\pi_1 N$ coming from each edge $e$. For instance, for an edge $e$ from $x$ to $y$ we have relations
	\[h_x^{(t_e)} = h_y^{\alpha(e)}{\bar e}^{\gamma(e)}, \quad e^{(t_e)} = h_y^{\beta(e)}{\bar e}^{\delta(e)}  \]
	in $\pi_1 M$, where $(t_e)$ is either the identity if $e\in T$ or a stable letter for the HNN extension over $e$ if $e\notin T$. Similarly in $\pi_1 N$ we have 
	\[h_x^{(t_e)} = h_y^{\alpha'(e)}{\bar e}^{\gamma'(e)}, \quad e^{(t_e)} = h_y^{\beta'(e)}{\bar e}^{\delta'(e)}  \]
	Now that we have fixed all the fibre orientations and a presentation, all these numbers become well-defined. To unify treatment of vertices in $R$ and $B$, define $\lambda_r = \kappa, \mu_r = 1$ for $r\in R$ and $\lambda_b = 1, \mu_b = \kappa$ for $b\in B$. We therefore also have a well-defined sign $\pm_e$ and well-defined $\rho(e)\in\hat{\Z}$ for each edge $e$ such that 
	\begin{equation} \gamma'=\pm_e \gamma, \quad \mu_x\delta'=\pm_e(\lambda_x \delta-\rho \gamma) \label{rhodef}\end{equation}
	when $d_0(e) = x$. The existence of $\rho$ is guaranteed by (c1) or (c2), and its uniqueness from the fact that $\gamma$ is not a zero-divisor in $\hat\Z$. Note that, by inverting the relations above, we find $\pm_{\bar e}=\pm_e$ and $\alpha(e) = -\delta(\bar e)$ so that 
	\begin{equation}\mu_y \alpha'=\pm_e(\lambda_y \alpha+\bar\rho \gamma)  \label{rhobardef}\end{equation}
	for $d_1(e)=y$, where $\bar{\rho}(e)= \rho(\bar e)$.
	
	Finally for each edge $e$ define $\sigma_e\in\{\pm 1\}$ as follows. If $x=d_0(e)$ has orientable base, set $\sigma_e=+1$. If $x\in R$ has non-orientable base, set $\sigma_e = \pm_e 1$. If $x\in B$ has non-orientable base, set $\sigma_e = \pm_e 1$ if $d_1(e)$ has orientable base, and $+1$ otherwise. Note that $\sigma_e \sigma_{\bar e} = \pm_e 1$.
	
	Now define an isomorphism $\Psi_x\colon \widehat{\pi_1 M_x}\to \widehat{\pi_1 N_x}$ using Theorem \ref{SFSexmorph} with input values $\lambda_x, \mu_x, \{\rho(e)\}, \{\sigma_e\}$. The condition (b) guarantees that the hypothesis \eqref{DThypotheis} of Theorem \ref{SFSexmorph} is satisfied because:
\begin{eqnarray*}
\sum \rho_i &=& \sum \frac{\lambda_x \delta(e)}{\gamma(e)} - \sum \frac{\pm_e\mu_x \delta'(e)}{\gamma(e)}\\
&=& \lambda_x \sum \frac{\delta(e)}{\gamma(e)} - \mu_x\sum \frac{\delta'(e)}{\gamma'(e)}\\
&=& \lambda_x \sum \frac{q_i}{p_i} - \mu_x\sum \frac{q'_i}{p'_i}
\end{eqnarray*}
	
	We are at long last in a position to build the promised isomorphism $\Omega$ from $\widehat{\pi_1 M}$ to $\widehat{\pi_1 N}$. First we will build a map defined on the vertex groups on the tree $T$, then deal with HNN extensions. Let $G_\bullet = \pi_1 M_\bullet$ and $H_\bullet = \pi_1 N_\bullet$. Choose some basepoint $t\in T$ and define $\Omega = \Psi_t$ on $\hat G_t$. Order the vertices of $X$ as $\{x_1,\ldots,x_n\}$ starting with $t$ such that each $\{x_1,\ldots,x_m\}$ spans a subtree $T_m$ of $T$. Suppose inductively that we have defined $\Omega$ coherently on the tree of groups $(T_m, \hat G_\bullet)$ such that $\Omega$ is defined on $\hat G_{x_i}$ by $\Omega(g) = \Psi_{x_i}(g)^{f_i}$ where $f_i$ is an element of $\hat H$ of the form 
	\[f_i = k_i k_{i-1}\cdots k_1 \] 
    where $k_j\in \hat H_j$ for $j\leq i$. There is an edge $e$ of $T$ with $d_0(e) = x = x_j\in \{x_1,\ldots,x_m\}$ and with $d_1(e)=y=x_{m+1}$. By construction of $\Psi_x$ there is $g_e\in \hat H_x$ such that $g_e$ evaluates to $\sigma_e$ under the orientation homomorphism on $\hat H_x$ and 
	\[\Psi_x(h_x)=h_x^{\lambda_x} ,\quad \Psi_x(e) = (e^{\sigma_e \mu_x})^{g_e}h_x^{\rho_e}= (e^{\sigma_e \mu_x}h_x^{\sigma_e\rho_e})^{g_e}\]
	Similarly we have $g_{\bar e}\in \hat H_y$ evaluating to $\sigma_{\bar e}$ under the orientation homomorphism, so that 
	\[\Psi_y(h_y)=h_y^{\lambda_y} ,\quad \Psi_y(\bar e) = (\bar e^{\sigma_{\bar e} \mu_y}h_y^{\sigma_{\bar e}\bar \rho_e})^{g_{\bar e}}\]
	Define $\Omega$ on $\hat G_y$ by $\Omega(g) = \Psi_y(g)^{g_{\bar e}^{-1}g_e f}$. Note that the conjugating element $f_{m+1}=g_{\bar e}^{-1}g_e f$ is of the form specified above. We must now check that this map is well-defined- that is, we must check that the relations in $\pi_1 M$ given by this edge are mapped to the trivial element of $\hat H = \widehat{\pi_1 N}$ under $\Omega$. This calculation is essentially the same as the matrix calculations leading to equation \eqref{matreqn}. To reassure the reader that all the signs check out, and to atone for my sins, I will give the computations anyway. The reader should note that throughout we will be heavily using the fact that, for example, every conjugate of $e$ commutes with $h_x$. We will drop $e$ from much of the notation, adding bars when necessary (e.g.\ $\sigma_{\bar e}$ will be written $\bar{\sigma}$).
	\begin{eqnarray*}
		\Omega(h_x^{-1}h^\alpha_y {\bar e}^\gamma) & = & \Psi_x(h_x^{-1})^{f_j}\Psi_y(h^\alpha_y {\bar e}^\gamma)^{g_{\bar e}^{-1} g_e f_j}\\
		& \,\;\quad\sim_{f^{-1}}&  h_x^{-\lambda_x}\left( h_y^{\alpha\lambda_y}({\bar e}^{\gamma\bar{\sigma} \mu_y} h_y^{\bar{\sigma}\bar{\rho}\gamma})^{g_{\bar e}} \right)^{g_{\bar e}^{-1} g_e}\\  
		& \,\;\quad\sim_{\smash g_e^{-1}} & h_x^{-\sigma\lambda_x}h_y^{\bar{\sigma}\alpha\lambda_y+ {\bar{\sigma}\bar{\rho}\gamma}}{\bar e}^{\gamma\bar{\sigma} \mu_y}  \\ 
		& = & (h^{\alpha'}_y {\bar e}^{\gamma'})^{-\sigma\lambda_x}h_y^{\bar{\sigma}\alpha\lambda_y+ {\bar{\sigma}\bar{\rho}\gamma}}{\bar e}^{\gamma\bar{\sigma} \mu_y}  \\
		& = & h_y{}^\wedge\{-\sigma\alpha'\lambda_x + \bar{\sigma}\alpha\lambda_y + \bar\sigma\bar\rho \gamma  \}\bar{e}{}^\wedge\{-\sigma\lambda_x \gamma' + \bar{\sigma}\mu_y \gamma \}\\
		& = & 1
	\end{eqnarray*}
	where $\sim_g$ denotes conjugation by $g$. In the last line we use the definitions \eqref{rhodef}, \eqref{rhobardef} of $\rho$ and $\bar{\rho}$ and the equalities $\sigma\bar{\sigma}=\pm_e 1$ and $\lambda_x=\mu_y$. Secondly, we  have
	\begin{eqnarray*}
	\Omega(e^{-1}h^\beta_y {\bar e}^\delta) & = & \Psi_x(e^{-1})^{f_j}\Psi_y(h^\beta_y {\bar e}^\delta)^{g_{\bar e}^{-1} g_e f_j}\\
	& \,\;\quad\sim_{f^{-1}} & \left( e^{-\sigma\mu_x}h_x^{-\sigma\rho} \right)^{g_e} \left(h_y^{\beta\lambda_y}({\bar e}^{\delta\bar{\sigma} \mu_y} h_y^{\bar{\sigma}\bar{\rho}\delta})^{g_{\bar e}} \right)^{g_{\bar e}^{-1} g_e}\\ 
	& \,\;\quad\sim_{g_e^{-1}} & \left(h_y^{\beta'}{\bar e}^{\delta'} \right)^{-\sigma\mu_x}\left(h_y^{\alpha'}{\bar e}^{\gamma'} \right)^{-\sigma\rho} h_y^{\bar{\sigma}\beta\lambda_y+ {\bar{\sigma}\bar{\rho}\delta}}{\bar e}^{\delta\bar{\sigma} \mu_y}  \\
	&=& h_y{}^\wedge\{-\beta'\sigma\mu_x-\alpha'\sigma\rho + \bar{\sigma}\beta\lambda_y + \bar{\sigma}\bar{\rho}\delta \}\cdot \\
	&&\quad\bar{e}{}^\wedge \{-\sigma\gamma'\rho-\sigma\mu_x\delta' + \delta\bar\sigma \mu_y\}
	\end{eqnarray*}
	Now the exponent of $\bar e$ vanishes by \eqref{rhodef}. To deal with the exponent of $h_y$, recall that as $M$ and $N$ are orientable, maps on boundary tori have determinant $-1$. Hence $\beta\gamma = 1 +\alpha\delta$, and $\beta'\gamma' = 1 +\alpha'\delta'$. Since $-\sigma\gamma'$ is not a zero-divisor in $\hat{\Z}$, it suffices to check that the exponent vanishes when multiplied by $-\sigma\gamma'$.
	\begin{eqnarray*}
	 -\sigma\gamma'(\text{Exponent of }h_y)&=&\beta'\gamma'\mu_x + \alpha'\gamma'\rho - \beta\gamma'\sigma\bar\sigma \lambda_y - \delta\gamma'\sigma\bar\sigma \bar\rho \\
	  &=& (1+\alpha'\delta')\mu_x - (1+\alpha\delta)\lambda_y + \alpha'\gamma'\rho - \delta\gamma\bar{\rho}\\
	  &=& \pm_e\alpha'(\lambda_x\delta-\rho\gamma)-\alpha\delta\lambda_y \pm_e\alpha'\gamma\rho - \delta\gamma\bar{\rho}\\
	  &=& \pm_e\delta\left(\lambda_x\alpha' \pm_e(-\alpha\lambda_y-\gamma\bar{\rho}) \right)\\& =& 0
	\end{eqnarray*}
	where we have freely used \eqref{rhodef} and \eqref{rhobardef}, as well as $\mu_x=\lambda_y$. Thus all relations are satisfied and we have defined $\Omega$ on the tree of groups $(T, \hat G_\bullet)$.
	
	It remains to define $\Omega$ on the stable letters $t_e$ for the HNN extensions over remaining edges. Let $e$ be such an edge. By construction there are elements $g_e\in H_x, g_{\bar e}\in H_y$ and elements $f_x, f_y$ in the subgroup generated by the vertex groups such that 
	\[\Omega|_{G_e}= (\Psi_x)^{f_j}, \quad \Psi_x(e) = (e^{\sigma_e \mu_x}h_x^{\sigma_e\rho})^{g_e}\]
	and similarly for $y$. Then set \[ \Omega(t_e) = f_x^{-1}g_e^{-1} t_e g_{\bar e} f_y\]
	We must of course check that the relations on the edge $e$ are satisfied. The verification of this is, up to conjugacy, almost identical with the verifications above and we will not punish the reader by writing it a second time.
	
	We now have a well-defined homomorphism $\Omega\colon \hat G\to \hat H$. We may check that is an isomorphism by a suitable application of the universal property of a graph of profinite groups. However, since $\Omega$ is not quite a morphism of graphs of groups on the nose- having various conjugating elements involved- we instead give another argument. Recall that $\hat H$ is generated by its vertex groups and the stable letters $t_e$. By induction, each vertex group $\hat H_{x_j}$ lies in the image of $\Omega$, since by construction the subgroup $\hat H_{x_j}^{g_e f_j}$ lies in this image- where $g_e\in \hat H_j$ and where $f_j$ is an element of $\hat H$ of the form 
	\[f_j = k_j k_{j-1}\cdots k_1 \] 
	where $k_i\in \hat H_i$ for $i\leq j$. By induction all the $k_i\, (i<j)$ are in the image of $\Omega$, hence so is 
	\[\hat H_{x_j}^{g_e f_j}= \hat H_{x_j}^{g_e k_j} = \hat H_{x_j} \] 
	Finally, since we have $\Omega(t_e) = l' t_e l$ where $l, l'$ are in the subgroup generated by the $\hat H_i$, it follows that each stable letter $t_e$ is in the image of $\Omega$. So $\Omega$ is surjective.
	
	Finally note that by symmetry we may also construct a surjective homomorphism $\Omega'\colon \hat H\to \hat G$. Both groups being Hopfian, this forces both $\Omega$ and $\Omega'$ to be isomorphisms. The proof is complete.
\end{proof}

\bibliographystyle{alpha}
\bibliography{graphmflds}
\end{document}